\pdfoutput=1
\documentclass[reqno]{amsart}

\usepackage{enumerate}
\usepackage{tabto}
\usepackage[mathscr]{euscript}
\usepackage{xcolor}
\usepackage{layout}
\usepackage{fancyhdr}
\usepackage{array}
\usepackage{amsfonts}
\usepackage{amsmath}
\usepackage{amssymb}
\usepackage{mathtools}
\usepackage{graphicx}
\usepackage{bm}
\usepackage{enumitem}
\usepackage{caption} 
\usepackage{color}
\usepackage{csquotes}
\usepackage{bookmark}
\usepackage{float}
\usepackage{multirow}
\usepackage[square,numbers,sort&compress]{natbib}
\usepackage{hyperref}
\hypersetup{colorlinks=true,linkcolor=blue,citecolor=red}
\allowdisplaybreaks

\def\Xint#1{\mathchoice
{\XXint\displaystyle\textstyle{#1}}%
{\XXint\textstyle\scriptstyle{#1}}%
{\XXint\scriptstyle\scriptscriptstyle{#1}}%
{\XXint\scriptscriptstyle\scriptscriptstyle{#1}}%
\!\int}
\def\XXint#1#2#3{{\setbox0=\hbox{$#1{#2#3}{\int}$ }
\vcenter{\hbox{$#2#3$ }}\kern-.6\wd0}}

\def\dashint{\Xint-}

\newtheorem{theorem}{Theorem}[section]
\newtheorem{lemma}[theorem]{Lemma}

\newtheorem{remark}[theorem]{Remark}
\theoremstyle{definition}

\numberwithin{equation}{section}

\newcommand{ \mc }{ \mathcal }
\newcommand{ \mr }{ \mathbb{R} }
\newcommand{ \mf }{ \mathbf{f} }

\newcommand{ \bu }{ \mathbf{u} }
\newcommand{ \bw }{ \mathbf{w} }

\newcommand{ \bv }{ \mathbf{v} }
\newcommand{ \bV }{ \mathbf{V} }

\begin{document}
\title[Regularity for Logarithmic Double Phase Functionals]{Regularity for Vectorial Double Phase Functionals with Two Modulating Coefficients and Logarithmic-Type Growth}

\author{Yumi Kim}\address{Department of Mathematics, Kyungpook National University, Daegu, 41566, Republic of Korea} \email{kym021226@knu.ac.kr} \author{Jehan Oh}\address{Department of Mathematics, Kyungpook National University, Daegu, 41566, Republic of Korea} \email{jehan.oh@knu.ac.kr}

\subjclass{Primary 35B65; Secondary 35J50, 49N60}
\date{\today.}
\keywords{double phase problems, logarithmic growth, vectorial minimizers, higher integrability}
\thanks{This work is supported by the National Research Foundation of Korea (NRF) grant funded by the Korea government [Grant Nos. RS-2025-00555316 and RS-2025-25415411].}

\begin{abstract}
We study vector-valued local minimizers of double phase functionals with two modulating coefficients and logarithmic-type growth. The energy density is $a(x)|D\bu|^p+b(x)|D\bu|^p\log(e+|D\bu|)$, where the nonnegative coefficients $a(\cdot)$ and $b(\cdot)$ may vanish but their sum is bounded away from zero. Assuming that $a(\cdot)$ is uniformly continuous and that $b(\cdot)$ satisfies a vanishing log-H\"older continuity condition, we prove that minimizers are locally H\"older continuous for every exponent in $(0,1)$. If both coefficients are H\"older continuous, then the gradient of a minimizer is locally H\"older continuous. The proof combines Gehring-type higher integrability with a comparison argument.
\end{abstract}

\maketitle

\section{\bf Introduction}\label{section 1}
Functionals with non-standard growth have been studied extensively in the calculus of variations, starting from the works of Marcellini \cite{Marcellini1989,Marcellini1991} on functionals with $(p,q)$-growth. A prominent example is the double phase functional
$$
w\mapsto\int_{\Omega}\left(|Dw|^p+a(x)|Dw|^q\right)dx,
$$
where $1<p<q$ and $a:\Omega\to[0,\infty)$ is a modulating coefficient. This functional was introduced by Zhikov \cite{Zhikov1986,Zhikov1993,Zhikov1995,Zhikov1997} in the context of homogenization and the Lavrentiev phenomenon. The coefficient $a(\cdot)$ determines where the $q$-growth term is active, and the regularity of minimizers depends on the regularity of $a(\cdot)$ and on the size of the gap between $p$ and $q$. The regularity theory for double phase functionals was developed by Colombo and Mingione \cite{Colombo2015,Colombo215a} and by Baroni, Colombo, and Mingione \cite{BCM2015,Baroni2018}.

Functionals with logarithmic growth lie at the borderline between $p$-growth and $(p,q)$-growth. A basic example is
$$
w\mapsto\int_{\Omega}|Dw|^p\log(e+|Dw|)\,dx,
$$
whose integrand grows faster than $|Dw|^p$ but slower than $|Dw|^q$ for every $q>p$. Such functionals have been studied in the regularity theory of variational problems, see for instance \cite{Giannetti2013,Ok2018}. Logarithmic growth is also formally related to the limit $q\to p$ of the double phase functional, since
$$
\frac{|Dw|^q-|Dw|^p}{q-p}\longrightarrow |Dw|^p\log|Dw| \qquad \text{as } q\to p.
$$
Replacing the $q$-growth term by a logarithmic perturbation of the $p$-growth term leads to the functional
$$
w\mapsto\int_{\Omega}\left(|Dw|^p+a(x)|Dw|^p\log(e+|Dw|)\right)dx,
$$
which was studied by Baroni, Colombo, and Mingione \cite{Baroni2015} as a borderline case of double phase functionals, see also \cite{Byun2017}. In this setting, conditions of logarithmic type on the modulus of continuity of $a(\cdot)$ appear naturally. More general double phase functionals, whose two terms have Orlicz growth, were studied by Byun and Oh \cite{BO2020}.

Another direction is to allow more than one modulating coefficient. This is natural from a modeling point of view, since different parts of the energy may be affected by different coefficients, and such functionals appear, for instance, in multi-phase problems \cite{DO2019}. In particular, separate coefficients for the two terms of a double phase functional allow each term to vanish independently of the other. Kim and Oh \cite{KO2024} studied the functional
$$
w\mapsto\int_{\Omega}\left(a(x)|Dw|^p+b(x)|Dw|^q\right)dx, \qquad 1<p<q,
$$
with two nonnegative modulating coefficients whose sum is bounded away from zero, where $a(\cdot)$ is uniformly continuous, $b(\cdot)$ is H\"older continuous, and $q$ is suitably close to $p$. A related problem with nearly linear growth, in which a nearly linear logarithmic term is combined with a higher power term, was considered in \cite{KO2025}. In the present paper, we keep this two-coefficient structure, but both terms have the same power growth with exponent $p>1$ and the second term carries an additional logarithmic factor.

For vector-valued minimizers, Ragusa and Tachikawa \cite{RT2020} proved the H\"older continuity of the gradients of minimizers of double phase functionals with variable exponents, and De Filippis and Mingione \cite{DM2020} obtained partial regularity for sphere-valued minimizers of non-uniformly elliptic integrals. In \cite{RT2024}, Ragusa and Tachikawa studied vector-valued minimizers of a variable exponent version of the borderline functional above, in which $p$ is replaced by a variable exponent $p(\cdot)$ and the power term keeps the fixed coefficient $1$. They proved the H\"older continuity of minimizers and of their gradients under suitable continuity assumptions on $a(\cdot)$ and $p(\cdot)$. Compared with \cite{RT2024}, the exponent $p>1$ is fixed in the present paper, but the power term also carries a modulating coefficient, and either of the two coefficients may vanish as long as their sum stays bounded away from zero. Moreover, for the H\"older continuity of minimizers, the coefficient of the power term is only assumed to be uniformly continuous, while the coefficient of the logarithmic term is required to be vanishing log-H\"older continuous, that is, its modulus of continuity is $o(1/\log(1/r))$. Our focus is the regularity theory for this two-coefficient structure.

More precisely, we consider the functional
\begin{equation}\label{main functional}
\mathcal P_{\log}(\bu,\Omega)\coloneq\int_{\Omega}\left[a(x)|D\bu|^p+b(x)|D\bu|^p\log(e+|D\bu|)\right]dx,
\end{equation}
defined on $W^{1,1}(\Omega;\mr^N)$, where $N\geq1$, $p>1$, and $\Omega\subset\mr^n$ is a bounded open set. Here $a,b : \overline{\Omega} \rightarrow \mathbb{R}$ are nonnegative bounded functions. We assume that $a(\cdot)$ is uniformly continuous on $\overline{\Omega}$, that is, there exists a modulus of continuity $\omega_{a}$ such that
$$
 |a(x)-a(y)| \leq \omega_{a}\left( |x-y| \right) \; \text{ for every } x,y \in \overline{\Omega}
$$
and
\begin{equation}\label{cond : omega_a}
    \lim_{r \rightarrow 0} \omega_a(r) = 0.
\end{equation}
We also assume that there exists $\mu >0$ such that 
\begin{equation}\label{cond : a,b}
    a(x) + b(x) \geq \mu \quad \text{ for all } x \in \Omega.
\end{equation}

The two modulating coefficients play different roles. The coefficient $a(\cdot)$ multiplies the $p$-growth term, while $b(\cdot)$ multiplies the logarithmic term. Either coefficient may vanish, but \eqref{cond : a,b} prevents both of them from vanishing at the same point. The logarithmic term requires an additional condition on the modulus of continuity $\omega_b$ of $b(\cdot)$, since the quantity
$$
\omega_b(R)\log\left(\frac{1}{R}\right)
$$
appears in the comparison estimates on a ball of radius $R$. This condition is expressed through the quantity $\ell$ defined in \eqref{cond : ell} below.

We say that $\bu \in W^{1,1}(\Omega;\mr^N)$ is a local minimizer of $\mathcal{P}_{\log}$ if
$$
a(x)|D\bu|^p+b(x)|D\bu|^p\log(e+|D\bu|) \in L^1(\Omega)
$$
and $\mathcal{P}_{\log}(\bu,\operatorname{supp}(\bu-\bw)) \le \mathcal{P}_{\log}(\bw,\operatorname{supp}(\bu-\bw))$ for every $\bw \in W^{1,1}(\Omega;\mr^N)$ with $\operatorname{supp}(\bu-\bw) \Subset \Omega$. By \eqref{cond : a,b}, the integrand in \eqref{main functional} is bounded from below by $\mu|D\bu|^p$, so every local minimizer satisfies $D\bu \in L^p(\Omega;\mr^{N\times n})$. Since our results are local, we may therefore assume $\bu \in W^{1,p}(\Omega;\mr^N)$ in Sections \ref{section 4} and \ref{section 5}.

The main result of this paper is the following.

\begin{theorem}\label{thm : main theorem}
    Let $\bu \in W^{1,1}(\Omega;\mr^N)$ be a local minimizer of the functional $\mathcal{P}_{\log}$ defined in \eqref{main functional}, where $a(\cdot)$ and $b(\cdot)$ satisfy \eqref{cond : a,b} and $a(\cdot)$ is uniformly continuous on $\overline{\Omega}$.
    Let $\omega_b(\cdot)$ be a modulus of continuity of $b(\cdot)$ in the sense that
    \begin{equation}\label{cond : modulus of b}
        |b(x)-b(y)| \leq \omega_{b}\left( |x-y| \right) \; \text{ for every } x,y \in \Omega
    \end{equation}
    and denote 
    \begin{equation}\label{cond : ell}
        \limsup_{r \rightarrow 0} \omega_{b}(r)\log\left( \dfrac{1}{r} \right) \eqcolon \ell.
    \end{equation}
    Then the following statements hold.
    \begin{enumerate}[label=\textup{(\roman*)}]
        \item If $\ell =0$, then $\bu \in C_{\operatorname{loc}}^{0, \beta}(\Omega ; \mr^N)$ for every $\beta \in (0,1)$.
        \item If $a(\cdot)$ and $b(\cdot)$ are H\"older continuous, then $D\bu$ is locally H\"older continuous in $\Omega$.
    \end{enumerate}
\end{theorem}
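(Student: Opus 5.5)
The plan is the standard "higher integrability plus comparison" scheme, adapted to the two-coefficient logarithmic structure. Write $H(x,t)=a(x)t^p+b(x)t^p\log(e+t)$.

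\textbf{Step 1: Caccioppoli and Gehring estimates.} First I would establish a Caccioppoli inequality for $H(x,|D\bu|)$ by the usual hole-filling argument. Testing minimality with $\bw=\bu-\eta^p(\bu-(\bu)_{B_R})$ is admissible because $H(x,\cdot)$ satisfies a $\Delta_2$ condition with constant depending only on $p$. Combining this with a Sobolev--Poincar\'e inequality and the lower bound $H\ge \mu t^p$ from \eqref{cond : a,b} gives a reverse H\"older inequality. Gehring's lemma then yields some $\sigma>0$ with $H(x,|D\bu|)^{1+\sigma}\in L^1_{\rm loc}$ and
\[
\Bigl(\dashint_{B_{R/2}} H(x,|D\bu|)^{1+\sigma}dx\Bigr)^{\frac1{1+\sigma}}\le c\dashint_{B_R}H(x,|D\bu|)\,dx .
\]
The delicate point is that the Sobolev--Poincar\'e step must hold for $b(x)t^p\log(e+t)$. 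This requires freezing $b$ on small balls, and here $\omega_b(R)\log(1/R)\le$ const is used. The same condition also controls $\log(e+|D\bu|)$ on balls where $\int|D\bu|^p\lesssim R^{n-p}$ up to the energy.

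\textbf{Step 2: frozen comparison.} On a small ball $B_R(x_0)$, I would freeze the coefficients at $a_0=a(x_0)$ and $b_0=b(x_0)$, and let $\bv$ minimize $\int_{B_R}H_0(|D\bv|)\,dx$ with $H_0(t)=a_0t^p+b_0t^p\log(e+t)$ and $\bv=\bu$ on $\partial B_R$. Since $a_0+b_0\ge\mu$, $H_0$ is a $\Delta_2\cap\nabla_2$ Young function with Uhlenbeck structure. The known regularity for autonomous Orlicz systems then gives Lipschitz bounds and decay, with constants uniform in $(a_0,b_0)$:
\[
\dashint_{B_\rho}H_0(|D\bv|)\le c\,\dashint_{B_R}H_0(|D\bv|),\qquad \sup_{B_{R/2}}H_0(|D\bv|)\le c\dashint_{B_R}H_0(|D\bv|).
\]
The comparison estimate comes from minimality of $\bu$ and $\bv$ together with the monotonicity of $H_0$. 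It bounds $\int_{B_R}|V_0(D\bu)-V_0(D\bv)|^2$, where $V_0$ is the usual auxiliary vector field, by
\[
\int_{B_R}|H(x,|D\bu|)-H_0(|D\bu|)|+|H_0(|D\bv|)-H(x,|D\bv|)|.
\]
This is in turn controlled by
\[
\omega_a(R)\int|D\bu|^p+\omega_b(R)\int|D\bu|^p\log(e+|D\bu|)
\]
and the analogous terms for $\bv$.

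\textbf{Step 3: estimating the error (main obstacle).} I expect the main obstacle to be the $b$-error, where the logarithm must be absorbed. Split $\log(e+|D\bu|)\le \log(e+|D\bu|^{p}\, \text{normalized})+c\log(1/R)$. The first part is handled by the higher integrability of Step 1, via $t\log(e+t)\lesssim t^{1+\sigma}+t$ applied to the normalized energy. The second part produces the factor $\omega_b(R)\log(1/R)$, which tends to $0$ when $\ell=0$. For this splitting I would use a priori bounds $\int_{B_R}H(x,|D\bu|)\le cR^{n-p+\varepsilon}$, bootstrapped in the iteration. For $a$, only $\omega_a(R)\to0$ is needed, because that term carries no logarithm. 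The outcome is
\[
\int_{B_\rho}H(x,|D\bu|)\le c\Bigl[\bigl(\tfrac\rho R\bigr)^n+\varepsilon(R)\Bigr]\int_{B_R}H(x,|D\bu|),\qquad \varepsilon(R)\to0 .
\]
A standard iteration lemma then gives $\int_{B_\rho}|D\bu|^p\le c\rho^{n-p+p\beta}$ for every $\beta<1$, and Morrey's lemma yields (i).

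\textbf{Step 4: gradient regularity, part (ii).} For (ii), Hölder continuity of $a$ and $b$ gives $\omega_b(R)\log(1/R)\le cR^{\alpha/2}$ and $\varepsilon(R)\le cR^{\gamma}$. Part (i) supplies the boundedness of $\dashint_{B_R}|D\bu|^p$ up to $R^{-\delta}$ for small $\delta$. I would then combine the comparison estimate with the $C^{1,\alpha_0}$ excess decay for $\bv$ (Lieberman/Diening–Stroffolini–Verde type for Orlicz systems):
\[
\dashint_{B_\rho}|V_0(D\bv)-(V_0(D\bv))_{\rho}|^2\le c(\rho/R)^{2\alpha_0}\dashint_{B_R}|V_0(D\bv)-(V_0(D\bv))_R|^2 .
\]
Iterating this, with the power-type comparison error, gives Campanato decay of $D\bu$ and hence $D\bu\in C^{0,\gamma}_{\rm loc}$. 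The Step 1 estimate is used first to show that $D\bu$ is locally bounded, which makes the logarithm harmless.
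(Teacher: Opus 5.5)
Your scheme is essentially the paper's: Caccioppoli and Gehring, comparison with a frozen autonomous minimizer whose estimates are uniform in the frozen coefficients, the decay $\int_{B_\rho}H\le c[(\rho/R)^n+\omega_a(R)+\omega_b(R)\log(1/R)]\int_{B_R}H$, Lemma~\ref{lemma 2.5}, and a Campanato argument for (ii). However, Steps~2--3 omit an ingredient the argument cannot do without.

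Because you freeze at the center values $a(x_0)$, $b(x_0)$, the difference $H(x,t)-H_0(t)$ has no sign. Your comparison error therefore contains $\omega_b(R)\int_{B_R}|D\bv|^p\log(e+|D\bv|)\,dx$. You list it (``the analogous terms for $\bv$''), but Step~3 only treats the $D\bu$-part, through the higher integrability of Step~1, which says nothing about $D\bv$. Nothing else in your plan controls this term:
\begin{itemize}
\item minimality of $\bv$ only gives $b(x_0)\int_{B_R}|D\bv|^p\log(e+|D\bv|)\,dx\le\int_{B_R}H_0(|D\bu|)\,dx$, which degenerates as $b(x_0)\to0$, and $b(x_0)=0$ is allowed;
\item the sup bound for $\bv$ is interior and does not reach $\partial B_R$;
\item for $b(x_0)=0$ you do not even know a priori that $\int_{B_R}H(x,|D\bv|)\,dx<\infty$, and without that the energy comparison is vacuous.
\end{itemize}
What is needed is higher integrability of $D\bv$ up to $\partial B_R$, uniform in the frozen coefficients: $\dashint_{B}\varphi(|D\bv|)^{1+\sigma}dx\le c\dashint_{B}\varphi(|D\bu|)^{1+\sigma}dx$ with $\varphi=H_0$. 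Combined with the $L\log L$ inequality \eqref{L log L} and $\log(e+(|D\bv|^p)_{B})\le c\log(1/R)$, this bounds the $\bv$-term by $c\,\omega_b(R)\log(1/R)$ times the energy of $\bu$.

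The paper freezes at the infima, meets the same term, and handles it with Theorem~\ref{thm:hi-phi}. That proof needs a Caccioppoli/Sobolev--Poincar\'e argument at the boundary of the ball and a Gehring lemma with right-hand side; it does not follow from Step~1. Alternatively, you can remove the $\bv$-term altogether by freezing at $\sup_{B_R}a$ and $\sup_{B_R}b$. Then $H(x,\cdot)\le H_0$ on $B_R$, so $\int_{B_R}H_0(|D\bv|)\ge\int_{B_R}H(x,|D\bv|)\ge\int_{B_R}H(x,|D\bu|)$. The comparison error then reduces to $\int_{B_R}[H_0(|D\bu|)-H(x,|D\bu|)]\,dx$, which your Step~3 does handle.

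There are three smaller points.
\begin{itemize}
\item The a priori bound $\int_{B_R}H\le cR^{n-p+\varepsilon}$ that you want to bootstrap in Step~3 is unnecessary. To split the logarithm you only need $\log(e+(|D\bu|^p)_{B_R})\le c\log(1/R)$, which follows from the crude bound $(|D\bu|^p)_{B_R}\le cR^{-n}\|D\bu\|_{L^p}^p$, as in the paper.
\item In Step~4, Gehring's lemma does not give $D\bu\in L^\infty_{\mathrm{loc}}$. Drop that claim; it is not needed, since the logarithm is already absorbed in $\omega_b(R)\log(1/R)\le cR^{\alpha/2}$.
\item In Step~2, uniformity of the frozen estimates in $(a_0,b_0)$ does not follow from $\Delta_2\cap\nabla_2$ or from $a_0+b_0\ge\mu$. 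In the vectorial case the Diening--Stroffolini--Verde theory also requires a H\"older-type condition on $\varphi''$. You must check that this condition and $t\varphi''(t)/\varphi'(t)\in[1/c,c]$ hold with constants depending only on $p$; the paper does this via Remark~\ref{cond : conv ft prop} and $t|\varphi'''(t)|\le c(p)\varphi''(t)$.
\end{itemize}
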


The proof is based on a comparison argument with minimizers of frozen functionals. On a ball $B_R\Subset\Omega$, we replace $a(\cdot)$ and $b(\cdot)$ by their infima $a_i(R)$ and $b_i(R)$ over $B_R$ and consider the minimizer $\bv$ of the corresponding autonomous functional on $B_{R/2}$ with boundary values $\bu$. The regularity estimates for such frozen functionals in Theorem \ref{Thm 3.1} are derived from the theory of functionals with $\varphi$-growth, see \cite{Lieberman1991} for the scalar case and \cite{Diening2009} for the vectorial case, and their constants and H\"older exponent do not depend on the frozen coefficients. In the vectorial case, this theory is available under an additional H\"older-type condition on the second derivative of the Orlicz function, see \cite[Assumption 2.2]{Diening2009}. For general non-autonomous functionals with Orlicz growth, H\"ast\"o and Ok \cite[Remark 1.7]{HO2022} pointed out that it is unclear whether this condition follows from their assumptions, and they left the vectorial case as a topic for future research. We do not address this general question here, but we verify the condition directly for the frozen functionals arising from \eqref{main functional} in the proof of Theorem \ref{Thm 3.1}.

Although $a_i(R)$ or $b_i(R)$ may vanish, their sum stays above $\mu/2$ on sufficiently small balls by \eqref{ineq : ai+bi}, so that no separate lower bound on either coefficient is needed. Using the higher integrability of Section \ref{section 4}, which relies on Gehring's lemma \cite{Gehring1973} as in \cite{Giaquinta1982}, we then obtain in Lemma \ref{lem : comparison} a comparison estimate between $\bu$ and $\bv$ whose error is controlled by
$$
\omega_a(R)+\omega_b(R)\log\left(\frac{1}{R}\right).
$$
The oscillation of $a(\cdot)$ enters this quantity linearly, while the oscillation of $b(\cdot)$ is weighted by $\log(1/R)$ because of the logarithmic factor. The smallness of this quantity on small balls yields decay estimates for the energy of $\bu$, and the local H\"older continuity of $\bu$ and of $D\bu$ then follows from Morrey- and Campanato-type estimates.

The paper is organized as follows. In Section \ref{section 2}, we introduce the notation and collect some preliminary results. In Section \ref{sectioon 3}, we study the frozen autonomous problem and establish the estimates used in the comparison argument. Section \ref{section 4} is devoted to the higher integrability of local minimizers. In Section \ref{section 5}, we prove Theorem \ref{thm : main theorem} by combining the comparison estimates with a decay argument.

\section{\bf Notation and preliminaries}\label{section 2}
We denote by $c$ a generic positive constant, which may change from line to line. Specific constants are denoted by $c_1$, $c_*$, $\bar{c}$, and so on. We denote by
$$B_r(x_0) \coloneq \left\{ x \in \mathbb{R}^n : |x-x_0| < r \right\}$$
the open ball with center $x_0$ and radius $r>0$. When the center is clear from the context, we simply write $B_r \equiv B_r(x_0)$. If $\mathcal{M} \subset \mathbb{R}^n$ is a measurable set with $0<|\mathcal{M}|<\infty$ and $f: \mathcal{M} \rightarrow \mathbb{R}^k$, $k \geq 1$, is an integrable function, we write
$$(f)_{\mathcal{M}} \equiv \dashint_{\mathcal{M}} f(x) \, dx \coloneq \dfrac{1}{|\mathcal{M}|} \int_{\mathcal{M}} f(x)\, dx.$$
For $t>0$, $\log t$ denotes the natural logarithm of $t$, and for $\gamma >0$ we write $\log^{\gamma}(e+t) \coloneq [ \log (e+t) ]^{\gamma}$.

Consider a convex function $\varphi : [0,\infty) \rightarrow [0,\infty)$ such that
\begin{equation}\label{define : convex function}
    \begin{cases}
        &\varphi \in C^1([0,\infty)) \cap C^2((0,\infty)), \, \displaystyle\lim_{t \rightarrow \infty} \varphi'(t) = \infty,\\ 
        &\varphi(0) = \varphi'(0)=0,\\ 
        &\varphi(t)=0 \Longleftrightarrow t=0.
    \end{cases}
\end{equation}
We define the vector field $\bV_{\varphi} : \mathbb{R}^{N \times n} \rightarrow \mathbb{R}^{N \times n}$ by 
\begin{equation}\label{def : vector field}
    \bV_{\varphi}(\mathbf{P}) \coloneq \left( \dfrac{\varphi'(|\mathbf{P}|)}{|\mathbf{P}|} \right)^{\frac{1}{2}} \mathbf{P}
\end{equation}
for $\mathbf{P} \in \mr^{N \times n}$.
By \eqref{define : convex function}, $\bV_{\varphi}$ is a bijection of $\mr^{N \times n}$.
Under the assumption
\begin{equation}\label{cond : mono map}
    \dfrac{1}{c_{\varphi}} \leq \dfrac{\varphi''(t)t}{\varphi'(t)} \leq c_{\varphi}
    \quad \text{for all } t>0 \text{ and some } c_{\varphi} \geq 1,
\end{equation}
$\bV_{\varphi}$ characterizes the monotonicity of the mapping
$\mathbf{P} \mapsto \left( \frac{\varphi'(|\mathbf{P}|)}{|\mathbf{P}|} \right) \mathbf{P}$. Indeed, for $\mathbf{Q}, \mathbf{R} \in \mr^{N \times n}$ we have
\begin{equation}\label{eq : monotonicity V}
    \dfrac{1}{c} \left|\bV_{\varphi}(\mathbf{Q})-\bV_{\varphi}(\mathbf{R})\right|^2 \leq \left\langle \dfrac{\varphi'(|\mathbf{Q}|)}{|\mathbf{Q}|}\mathbf{Q} - \dfrac{\varphi'(|\mathbf{R}|)}{|\mathbf{R}|}\mathbf{R} , \mathbf{Q}- \mathbf{R}\right\rangle \leq c\left|\bV_{\varphi}(\mathbf{Q})-\bV_{\varphi}(\mathbf{R}) \right|^2,
\end{equation}
with a constant $c \geq 1$ depending on $n$, $N$, and $c_{\varphi}$.
Moreover, by \cite[Lemma 2.4]{Diening2009}, there exists a constant $c$ depending only on $n$, $N$, and the constant $c_{\varphi}$ in \eqref{cond : mono map} such that
\begin{equation}\label{ineq : 2.5}
    \varphi''(|\mathbf{Q}| + |\mathbf{R}|) |\mathbf{Q}-\mathbf{R}|^2 \leq c\,|\bV_{\varphi}(\mathbf{Q}) - \bV_{\varphi}(\mathbf{R})|^2.
\end{equation}
The quantity $\left| \bV_{\varphi}(\mathbf{P}) \right|^2$ is comparable to $\varphi(|\mathbf{P}|)$. More precisely, there exists a constant $c \equiv c(c_{\varphi})$ such that, for every $\mathbf{P} \in \mr^{N \times n}$,
\begin{equation}\label{ineq  : comparable}
    \dfrac{1}{c}\left| \bV_{\varphi}(\mathbf{P}) \right|^2 \leq \varphi(|\mathbf{P}|) \leq c \left| \bV_{\varphi}(\mathbf{P}) \right|^2.
\end{equation}
This follows from \eqref{cond : mono map} together with the estimate $\frac{\varphi(t)}{c(c_{\varphi})} \leq \varphi'(t)t \leq c(c_{\varphi})\varphi(t)$ for all $t \geq 0$, which can be obtained via integration by parts. 
For the particular choice $\varphi(t)=\frac{t^p}{p}$, we write $\bV_p(\cdot) \equiv \bV_{\varphi}$, that is,
\begin{equation}
    \bV_p(\mathbf{P}) \coloneq |\mathbf{P}|^{\frac{p-2}{2}}\mathbf{P}.
\end{equation}
Similarly, for $\varphi(t)=\frac{1}{p}t^p\log(e+t)$, we write $\bV_{\log}(\cdot) \equiv \bV_{\varphi}$, that is, 
\begin{equation}
    \bV_{\log}(\mathbf{P}) \coloneq \left( 
    |\mathbf{P}|^{p-2} \log (e+|\mathbf{P}|)+\dfrac{|\mathbf{P}|^{p-1}}{p(e+|\mathbf{P}|)}
    \right)^{\frac{1}{2}}\mathbf{P}.
\end{equation}
In this setting, the following elementary inequality holds for all $\mathbf{Q}, \mathbf{R} \in \mr^{N \times n}$:
\begin{equation}\label{ineq : |V_p|}
    \dfrac{1}{c} (|\mathbf{Q}| + |\mathbf{R}|)^{\frac{p-2}{2}} |\mathbf{Q} - \mathbf{R}|
    \leq \left| \bV_p(\mathbf{Q}) - \bV_p(\mathbf{R}) \right|
    \leq c(|\mathbf{Q}| + |\mathbf{R}|)^{\frac{p-2}{2}} |\mathbf{Q} - \mathbf{R}|,
\end{equation}
where $c$ depends only on $n$, $N$, and $p$, see \cite{Giusti2003}.
In particular, if $p \geq 2$, then we have
\begin{equation}\label{2.10}
    |\mathbf{Q} - \mathbf{R}|^p \leq c\left| \bV_p(\mathbf{Q}) - \bV_p(\mathbf{R}) \right|^2
\end{equation}
for a constant $c$ depending on $n$, $N$, and $p$.

\begin{remark}\label{cond : conv ft prop}
    If $\varphi_1$ and $\varphi_2$ are as above and satisfy \eqref{cond : mono map} with constants $c_{\varphi_1}$ and $c_{\varphi_2}$, then their sum 
    $\varphi \coloneq \varphi_1 + \varphi_2$ satisfies \eqref{cond : mono map} with $c_{\varphi} \coloneq 2\max\{c_{\varphi_1}, c_{\varphi_2}\}$. Indeed,
    \begin{equation*}
    \dfrac{\varphi''_1(t)t}{\varphi'_1(t) + \varphi'_2(t)}
    + \dfrac{\varphi''_2(t)t}{\varphi'_1(t) + \varphi'_2(t)}
    \leq \dfrac{\varphi''_1(t)t}{\varphi'_1(t)}
    +\dfrac{\varphi''_2(t)t}{\varphi'_2(t)}
    \leq c_{\varphi_1} + c_{\varphi_2}
    \end{equation*} and
    \begin{equation*}
    \dfrac{\varphi''_1(t)t}{\varphi'_1(t) + \varphi'_2(t)}
    + \dfrac{\varphi''_2(t)t}{\varphi'_1(t) + \varphi'_2(t)}
    \geq \dfrac{\left[ \varphi''_1(t) + \varphi''_2(t) \right]t}{2\max \{\varphi'_1(t), \varphi'_2(t) \}}
    \geq \dfrac{1}{2} \min \left\{ \dfrac{1}{c_{\varphi_1}}, \dfrac{1}{c_{\varphi_2}} \right\}.
    \end{equation*}
\end{remark} 

We will apply the previous results to the function
\begin{equation}\label{rem : 2.1}
    \varphi(t) \coloneq a_0 t^p + b_0 t^p \log(e+t) ,
\end{equation}
where $p>1$ and $a_0 ,\,b_0 \geq 0$ satisfy $a_0 + b_0 > 0$.
This function satisfies \eqref{define : convex function}, and by Remark \ref{cond : conv ft prop} it satisfies \eqref{cond : mono map} with a constant $c_{\varphi} \equiv c_{\varphi}(p)$ independent of $a_0$ and $b_0$.

We will use the following Sobolev--Poincar\'e inequality, which can be found in \cite[Theorem 7]{Baroni2015a}.
\begin{lemma}\label{rem: 2.2}
    Let $\varphi : [0,\infty) \rightarrow [0,\infty)$ satisfy \eqref{define : convex function} and \eqref{cond : mono map}, and let $B_R \equiv B_R(x_0) \subset \Omega$. Then there exist an exponent $d_1 \in (0,1)$ and a constant $c$, both depending only on $n$, $N$, and $c_{\varphi}$, such that 
    \begin{equation}\label{ienq : Sobolev}
        \dashint_{B_R} \varphi\left( \dfrac{|\mf-(\mf)_{B_R}|}{R} \right)\, dx \leq c\left( \dashint_{B_R} \left[ \varphi(|D\mf|) \right]^{d_1}\, dx \right)^{\frac{1}{d_1}}
    \end{equation}
    for every function $\mf \in  W^{1,1}(\Omega;\mr^N)$ with $\int_{B_R} \varphi (|D\mf|)\, dx < \infty$.
\end{lemma}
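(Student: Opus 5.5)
The plan is to derive \eqref{ienq : Sobolev} from the classical Riesz potential representation combined with the growth information encoded in \eqref{cond : mono map}. By the usual translation and scaling argument, I would first reduce to $B_R=B_1(0)$. This reduction is legitimate because \eqref{define : convex function} and \eqref{cond : mono map} are invariant under $\varphi\mapsto \varphi(\lambda\,\cdot)$ with the same $c_\varphi$. Next, integrating \eqref{cond : mono map} twice gives $p_0\coloneq 1+1/c_\varphi>1$ and $q_0\coloneq 1+c_\varphi$ such that $t\mapsto \varphi(t)/t^{p_0}$ is nondecreasing and $t\mapsto\varphi(t)/t^{q_0}$ is nonincreasing. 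In particular, $\varphi$ satisfies the $\Delta_2$ condition with a constant depending only on $c_\varphi$.

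For any $\gamma\in(1,p_0]$, the function $\Phi\coloneq\varphi^{1/\gamma}$ has the property that $\Phi(t)/t$ is nondecreasing, so $\Phi$ is equivalent, up to constants depending on $c_\varphi$, to a convex function. Therefore Jensen's inequality applies to $\Phi$ up to constants. Starting from the pointwise estimate
$$|\mf(x)-(\mf)_{B_1}|\le c(n)\int_{B_1}\frac{|D\mf(y)|}{|x-y|^{n-1}}\,dy,\qquad x\in B_1,$$
I would normalize the kernel $|x-y|^{1-n}$ on $B_1$ to a probability measure, whose total mass is bounded by a dimensional constant. Jensen's inequality for (the convex equivalent of) $\Phi$, together with $\Delta_2$, then yields
$$\Phi\bigl(|\mf(x)-(\mf)_{B_1}|\bigr)\le c\int_{B_1}\frac{\Phi(|D\mf(y)|)}{|x-y|^{n-1}}\,dy = c\, I_1\bigl(\Phi(|D\mf|)\chi_{B_1}\bigr)(x).$$
Raising this to the power $\gamma$ gives $\varphi(|\mf-(\mf)_{B_1}|)\le c\,[I_1 g]^\gamma$ with $g\coloneq\Phi(|D\mf|)=\varphi(|D\mf|)^{1/\gamma}$.

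It then suffices to bound $\|I_1 g\|_{L^\gamma(B_1)}$ by $\|g\|_{L^{s}(B_1)}$ for some $s<\gamma$. Setting $d_1\coloneq s/\gamma<1$ then gives exactly \eqref{ienq : Sobolev}. If $\gamma$ can be chosen with $\gamma>n/(n-1)$, the Hardy--Littlewood--Sobolev inequality applies with $s=n\gamma/(n+\gamma)\in(1,\gamma)$, and this case is done.

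\textbf{Main obstacle.} This is the case in which $p_0$ is close to $1$, so that every admissible $\gamma$ satisfies $\gamma\le n/(n-1)$. There HLS in its strong form is unavailable, since it would require $s\le 1$. My first attempt would be to use that on the bounded set $B_1$, the operator $I_1$ maps $L^1$ into weak-$L^{n/(n-1)}$, and hence into $L^\gamma(B_1)$ for every $\gamma<n/(n-1)$. This gives $\|I_1 g\|_{L^\gamma}\le c\|g\|_{L^1}$, i.e.\ $s=1<\gamma$, provided we choose $1<\gamma<\min\{p_0,n/(n-1)\}$. Then $d_1=1/\gamma\in(0,1)$ depends only on $n$ and $c_\varphi$. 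Since the whole argument works with $|\mf|$ and $|D\mf|$ only, the vectorial case $N\ge1$ causes no extra difficulty. Finally, scaling back to $B_R$ produces the factor $1/R$ inside $\varphi$ on the left-hand side of \eqref{ienq : Sobolev}.
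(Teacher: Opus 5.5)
Your proposal is correct, but the paper contains no argument for this lemma to compare with: the lemma is quoted from \cite[Theorem 7]{Baroni2015a}. What you give is a self-contained potential-theoretic proof:
\begin{itemize}
\item rescale via $\mf\mapsto\mf(x_0+R\,\cdot)/R$; this leaves $\varphi$ unchanged, so the invariance of \eqref{cond : mono map} under $\varphi\mapsto\varphi(\lambda\,\cdot)$ is not actually needed;
\item extract the indices $p_0=1+1/c_\varphi$ and $q_0=1+c_\varphi$ from \eqref{cond : mono map};
\item observe that $\Phi=\varphi^{1/\gamma}$ with $1<\gamma\le p_0$ satisfies $\tilde\Phi(t)\le\Phi(t)\le\tilde\Phi(2t)$ for the convex function $\tilde\Phi(t)=\int_0^t\Phi(s)s^{-1}\,ds$;
\item move $\Phi$ inside the Riesz potential by Jensen's inequality.
\end{itemize}

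Two small points. First, to pass from the probability measure $|x-y|^{1-n}dy/K(x)$ back to $I_1$, you need $K(x)=\int_{B_1}|x-y|^{1-n}dy$ bounded from below as well as from above. This holds, since $|x-y|\le 2$ gives $K(x)\ge 2^{1-n}|B_1|$.

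Second, your case distinction is unnecessary. The argument you call the main obstacle works in every dimension: take any $1<\gamma<\min\{p_0,n/(n-1)\}$, reading $n/(n-1)$ as $\infty$ when $n=1$. Minkowski's integral inequality together with $\sup_{y\in B_1}\int_{B_1}|x-y|^{(1-n)\gamma}dx<\infty$ then gives $\|I_1 g\|_{L^\gamma(B_1)}\le c\|g\|_{L^1(B_1)}$. So $d_1=1/\gamma$, and Hardy--Littlewood--Sobolev is never needed.

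Compared with the citation, your route makes the dependence $d_1=d_1(n,c_\varphi)$ and $c=c(n,N,c_\varphi)$ fully explicit. That is exactly what the paper relies on when it applies the lemma to $\varphi(t)=a_0t^p+b_0t^p\log(e+t)$ uniformly in $a_0,b_0$. Your argument also gives the variant without mean value, used in the boundary case of the proof of Theorem \ref{thm:hi-phi}, directly: replace $(\mf)_{B_1}$ in the pointwise potential estimate by the mean over a subset of proportional measure on which the function vanishes.
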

For a bounded open set $\Omega \subset \mr^n$ and a strictly increasing convex function $\varphi$ with $\displaystyle\lim_{t \rightarrow 0} \frac{\varphi(t)}{t} =0 $ and $\displaystyle\lim_{t \rightarrow \infty} \frac{\varphi(t)}{t} = \infty$, we consider the Orlicz space $L^{\varphi}(\Omega;\mr^k)$ with the Luxemburg norm
\begin{equation}
    \|\mf\|_{\varphi} = \inf \left\{ \lambda >0 : \dashint_{\Omega} \varphi \left( \dfrac{|\mf(x)|}{\lambda} \right) \, dx \leq 1 \right\}
\end{equation}
for measurable functions $\mf : \Omega \rightarrow \mr^k$. In particular, if $\varphi(t) = t^p$, the above quantity corresponds to the averaged $L^p$ norm. For $\varphi(t) = t^p \log^{\gamma}(e+t)$ with $p \geq 1$ and $\gamma > 0$, the Orlicz space $L^{\varphi}(\Omega;\mr^k)$ is denoted by $L^p\log^{\gamma}L(\Omega;\mr^k)$. It consists of all measurable functions $\mf$ satisfying
$$
\int_{\Omega} |\mf|^p \log^{\gamma} (e+|\mf|) \, dx < \infty.
$$
It is well known that the $L \log^{\gamma}L$ norm, which we denote by $\left| \mf \right|_{L \log^{\gamma}L}$, is controlled by the averaged $L^q$ norm for every $q >1$, namely
$$
\left| \mf \right|_{L \log^{\gamma}L(B_R)} \leq c(\gamma,q)\left( \dashint_{B_R}|\mf|^q\, dx \right)^{\frac{1}{q}}.
$$
Moreover, by \cite[Section 8]{Iwaniec1999}, $\left|\mf\right|_{L \log^{\gamma}L(B_R)}$ is equivalent to the quantity
$$
\dashint_{B_R} |\mf|\log^{\gamma}\left( e + \dfrac{|\mf|}{(|\mf|)_{B_R}} \right)\, dx
$$ 
up to constants depending only on $n$, $k$, and $\gamma$. Consequently, for all $q>1$ and $\mf \in L^q(B_R;\mr^k)$, we have
\begin{equation}\label{L log L}
    \dashint_{B_R}|\mf| \log^{\gamma}\left( e+\dfrac{|\mf|}{(|\mf|)_{B_R}} \right) \, dx \leq c(n, k, \gamma, q) \left( \dashint_{B_R} |\mf|^q\, dx \right)^{\frac{1}{q}}.
\end{equation}

We next recall two iteration lemmas. The first one can be found in \cite[Lemma 7.3]{Giusti2003}.
\begin{lemma}\label{lemma 2.5}
    Let $\phi : [0, \tilde{R}] \rightarrow [0,\infty)$ be a nondecreasing function satisfying, for some $\epsilon \geq 0$,
    $$
    \phi(\rho) \leq \tilde{c} \left[ \left( \dfrac{\rho}{R} \right)^n + \epsilon \right] \phi(R) \quad \text{whenever } 0 < \rho \leq R \leq \tilde{R}.
    $$
    Then for every $\delta \in (0,n)$ there exists $\overline{\epsilon} \equiv \overline{\epsilon}(n,\delta,\tilde{c}) >0$ such that if $\epsilon \leq \overline{\epsilon}$, then
    $$
    \phi(\rho) \leq \overline{c}\left( \dfrac{\rho}{R}  \right)^{n-\delta} \phi(R)
    $$
    whenever $0 < \rho \leq R \leq \tilde{R}$, for a constant $\overline{c} \equiv \overline{c}(n,\delta,\tilde{c}) >0$.
\end{lemma}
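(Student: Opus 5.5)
The statement to prove is Lemma \ref{lemma 2.5}, the classical iteration lemma of \cite[Lemma 7.3]{Giusti2003}. I will argue by a standard geometric iteration.

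\textbf{Step 1: fix the scale factor.} Fix $\delta\in(0,n)$ and choose $\tau\in(0,1)$ so small that
$$
2\tilde c\,\tau^{n}\le \tau^{\,n-\delta/2},
$$
that is, $\tau^{\delta/2}\le 1/(2\tilde c)$. This is possible since $\delta>0$, and $\tau$ depends only on $n$, $\delta$, $\tilde c$.

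\textbf{Step 2: fix the smallness threshold.} Set $\overline\epsilon:=\tau^{n}$. If $\epsilon\le\overline\epsilon$, the hypothesis with $\rho=\tau R$ gives
$$
\phi(\tau R)\le \tilde c(\tau^n+\epsilon)\phi(R)\le 2\tilde c\,\tau^n\phi(R)\le \tau^{\,n-\delta/2}\phi(R)
$$
for every $R\le\tilde R$.

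\textbf{Step 3: iterate.} Applying Step 2 successively on the radii $\tau^{j}R$, all of which are at most $\tilde R$, gives
$$
\phi(\tau^{k}R)\le \tau^{k(n-\delta/2)}\phi(R)\le \tau^{k(n-\delta)}\phi(R)\qquad\text{for all } k\ge 0.
$$

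\textbf{Step 4: fill in arbitrary radii.} Take $0<\rho\le R$ and choose $k\ge0$ with $\tau^{k+1}R<\rho\le\tau^kR$. Since $\phi$ is nondecreasing,
$$
\phi(\rho)\le\phi(\tau^kR)\le \tau^{k(n-\delta)}\phi(R)=\tau^{-(n-\delta)}\,\tau^{(k+1)(n-\delta)}\phi(R)\le \tau^{-(n-\delta)}\Bigl(\frac{\rho}{R}\Bigr)^{n-\delta}\phi(R).
$$
The last inequality uses $\tau^{k+1}<\rho/R$ together with $n-\delta>0$. This is the claim with $\overline c:=\tau^{\delta-n}$, which depends only on $n$, $\delta$, $\tilde c$.

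The only delicate point is the choice of $\tau$ in Step 1. Reserving half of the exponent gap, through the exponent $n-\delta/2$, is what lets the error term $\epsilon$ be absorbed while the constant in the iterated estimate stays equal to $1$. Without that room, constants would accumulate at each step of the iteration. Everything else is routine.
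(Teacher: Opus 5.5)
Your proof is correct and is the standard geometric iteration behind \cite[Lemma 7.3]{Giusti2003}, which the paper cites without reproducing: you fix $\tau$ so that $2\tilde c\,\tau^{n}$ is dominated by a smaller power of $\tau$, take $\overline\epsilon=\tau^{n}$, iterate on the radii $\tau^{k}R$, and use monotonicity to handle intermediate radii. Reserving only half of the gap is harmless but not needed here. Choosing $\tau^{\delta}\le 1/(2\tilde c)$ and iterating directly with exponent $n-\delta$ works just as well, because without an additive $BR^{\beta}$ term (present in Giusti's general version) there is no geometric series that needs the extra room.
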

The second one is \cite[Lemma 6.1]{Giusti2003}.
\begin{lemma}\label{lem : iteration lemma}
    Let $h : [r_1,r_2] \rightarrow [0,\infty)$ be a bounded function such that
    $$
    h(r) \leq \theta h(s) + \dfrac{A}{(s-r)^{\kappa}} \quad \text{for every } r_1 \leq r < s \leq r_2,
    $$
    where $\theta \in (0,1)$, $A \geq 0$, and $\kappa >0$. Then
    $$
    h(r_1) \leq \dfrac{c(\theta,\kappa)A}{(r_2-r_1)^{\kappa}}.
    $$
\end{lemma}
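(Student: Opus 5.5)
The plan is the standard geometric iteration. Since $\theta<1$, I can fix $\lambda\in(0,1)$, depending only on $\theta$ and $\kappa$, such that $\theta\lambda^{-\kappa}<1$; for instance $\lambda^{\kappa}=\frac{1+\theta}{2}$ works. I then define an increasing sequence of radii by
$$
r^{(0)}\coloneq r_1,\qquad r^{(i+1)}\coloneq r^{(i)}+(1-\lambda)\lambda^{i}(r_2-r_1),\quad i\ge 0 .
$$
Summing the increments gives $r^{(k)}=r_1+(1-\lambda^k)(r_2-r_1)$. Hence every $r^{(i)}$ lies in $[r_1,r_2)$, and the sequence increases to $r_2$. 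In particular each pair $r^{(i)}<r^{(i+1)}$ is admissible in the hypothesis.

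Next I apply the assumed inequality with $r=r^{(i)}$ and $s=r^{(i+1)}$. Since $s-r=(1-\lambda)\lambda^i(r_2-r_1)$, this gives
$$
h(r^{(i)})\le \theta\, h(r^{(i+1)})+\frac{A\,\lambda^{-i\kappa}}{(1-\lambda)^{\kappa}(r_2-r_1)^{\kappa}} .
$$
Iterating this $k$ times starting from $i=0$, I obtain by induction
$$
h(r_1)\le \theta^{k}h(r^{(k)})+\frac{A}{(1-\lambda)^{\kappa}(r_2-r_1)^{\kappa}}\sum_{i=0}^{k-1}\left(\theta\lambda^{-\kappa}\right)^{i}.
$$

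Finally I let $k\to\infty$. Since $h$ is bounded on $[r_1,r_2]$ and $\theta<1$, the term $\theta^k h(r^{(k)})$ tends to $0$. The series converges because $\theta\lambda^{-\kappa}<1$. This yields the claim with
$$
c(\theta,\kappa)=(1-\lambda)^{-\kappa}\left(1-\theta\lambda^{-\kappa}\right)^{-1}.
$$

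There is no real obstacle. The only point needing care is choosing $\lambda$ so that the growth of $(s-r)^{-\kappa}$ along the sequence is beaten by the decay of $\theta^i$. The boundedness hypothesis on $h$ is used exactly once, to discard the term $\theta^k h(r^{(k)})$.
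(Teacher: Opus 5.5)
Your proof is correct, and it is essentially the approach behind the paper's statement: the paper gives no proof of its own and cites Giusti's Lemma 6.1, whose standard proof is exactly this geometric iteration along $r^{(i+1)}=r^{(i)}+(1-\lambda)\lambda^{i}(r_2-r_1)$ with $\theta\lambda^{-\kappa}<1$. Your choice $\lambda^{\kappa}=\frac{1+\theta}{2}$, your use of boundedness to discard $\theta^{k}h(r^{(k)})$, and the resulting constant $(1-\lambda)^{-\kappa}(1-\theta\lambda^{-\kappa})^{-1}$ are all fine.
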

The self-improving property of reverse H\"older inequalities is described by the following lemma, which goes back to Gehring \cite[Lemma 3]{Gehring1973}, see also \cite[Section 6.4]{Giusti2003}.
\begin{lemma}\label{Lem : Gehring lemma}
    Let $f \in L^1(\Omega)$ be such that
    $$
    \dashint_{B_{R/2}} |f| \, dx \leq \tilde{c}\left( \dashint_{B_R} |f|^d \, dx \right)^{\frac{1}{d}}
    $$
    for some exponent $d \in (0,1)$, some constant $\tilde{c} \geq 1$, and every ball $B_R \subset \Omega$ with radius $R \leq R_0$. Then there exists an exponent $\delta_g \equiv \delta_g(d, \tilde{c})>0$ such that $f \in L_{\operatorname{loc}}^{1+\delta_g}(\Omega)$ and 
    $$
    \dashint_{B_{R/2}} |f|^{1+\delta_g}\, dx \leq c(d,\tilde{c},R_0) \left( \dashint_{B_R} |f| \, dx \right)^{1+\delta_g}
    $$
    for every ball $B_R \subset \Omega$ with radius $R \leq R_0$.
\end{lemma}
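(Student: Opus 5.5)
The statement just before this point is Gehring's lemma (Lemma \ref{Lem : Gehring lemma}), which the paper quotes from \cite{Gehring1973} and \cite[Section 6.4]{Giusti2003}. I would prove it by the standard Calderón--Zygmund covering argument, in three steps.

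\textbf{Step 1: reduce to a fixed ball.} Fix $B_R\subset\Omega$ with $R\le R_0$ and set $g:=|f|^d$ and $s:=1/d>1$. Then the hypothesis reads
\[
\dashint_{B_{\rho/2}} g^s\,dx\le \tilde c^{\,s}\Big(\dashint_{B_\rho} g\,dx\Big)^s
\]
on every subball. For a cube or ball $Q$ with $2Q\subset B_R$, this follows by applying the hypothesis to the enlarged ball. Normalize by $\lambda_0:=\big(\dashint_{B_R} g^s\big)^{1/s}$, so that $\dashint_{B_R}|f|\,dx=\lambda_0^{s}$.

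\textbf{Step 2: level-set estimate.} For $\lambda$ above a multiple of $\lambda_0$, take a Calderón--Zygmund (Vitali-type) decomposition of the set $\{x\in B_{R/2}: M(g^s\chi_{B_R})(x)>\lambda^s\}$. On each selected ball $B$ the average of $g^s$ is comparable to $\lambda^s$. The reverse inequality then bounds this by $(\dashint_{2B} g)^s$, so splitting $g$ at level $\epsilon\lambda$ gives
\[
\int_{\{g^s>\lambda^s\}\cap B_{R/2}} g^s\,dx\le c\,\lambda^{s-1}\int_{\{g>\epsilon\lambda\}\cap B_R} g\,dx .
\]

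\textbf{Step 3: integrate against $\lambda^{\delta s-1}$.} Multiply by $\lambda^{\delta s-1}$ and integrate in $\lambda$, using Fubini. This yields
\[
\int g^{s(1+\delta)}\le c\,\delta\int g^{s(1+\delta)}+\text{lower-order terms}.
\]
Choosing $\delta_g$ small enough absorbs the first term on the right into the left side. One must first work with truncations $\min\{g,k\}$, so that all quantities are finite, and then let $k\to\infty$.

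The main obstacle is the covering step, because the hypothesis is only available on balls $B_\rho\subset\Omega$ with $\rho\le R_0$ and the two radii differ by a factor of two. I would handle this as in \cite[Section 6.4]{Giusti2003}, using the localized maximal function and the distance-weighted version of the inequality. The dependence of the constant on $R_0$ enters through the number of covering balls. Since this is a classical result, I would cite it rather than reprove it in the paper.
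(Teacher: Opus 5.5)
Your proposal takes essentially the paper's approach. The paper does not prove this lemma but quotes it from Gehring's work and Section 6.4 of Giusti's book, and your reduction to $g=|f|^d$, $s=1/d$, followed by the Calder\'on--Zygmund/Vitali level-set estimate, Fubini against $\lambda^{\delta s-1}$, truncation and absorption, is the classical argument behind that citation. Keep in mind that the absorption in Step~3 only works once the level-set inequality holds on a common domain (which the distance-weighted formulation you mention, or an iteration over radii $R/2\le r<r'\le R$ via Lemma~\ref{lem : iteration lemma}, provides), and that the restriction $\rho\le R_0$ is not a real obstacle, since every ball used inside $B_R$ has radius at most $R\le R_0$, so the constant need not depend on $R_0$.
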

Finally, we recall that, for all $x, y \geq 0$ and $A \geq 1$,
\begin{equation}\label{cond : log}
    \begin{aligned}
        &\log(e+xy) \leq \log(e+x) + \log(e+y)\\
        &\log(e+Ax) \leq A \log(e+x)
    \end{aligned}
\end{equation}

\section{\bf Estimates for frozen functionals}\label{sectioon 3}
We consider the functional
\begin{equation}\label{def : functional P_0}
    \mathcal{P}_0(\bw, \Omega) \coloneq \int_{\Omega} \left[ a_0 |D\bw|^p + b_0 |D\bw|^p \log(e+|D\bw|) \right] \, dx,
\end{equation}
where $a_0, b_0 \geq 0$ are constants with $a_0+b_0>0$.

\begin{theorem}\label{Thm 3.1}
    Let $\bv \in W^{1,p}(\Omega;\mr^N)$ be a local minimizer of the functional $\mathcal{P}_0$ defined in \eqref{def : functional P_0}. There exists $\tilde{\alpha} \in (0,1)$, depending only on $n$, $N$, and $p$ and in particular independent of $a_0$, $b_0$, and $\bv$, such that $D\bv \in C_{\operatorname{loc}}^{0,\tilde{\alpha}}(\Omega; \mr^{N \times n})$. Moreover, whenever $B_R \subset \Omega$,
    we have the following inequalities:
    \begin{equation}\label{ineq : sup, int}
    \begin{aligned}
        &\sup_{B_{R/2}} \left( a_0 |D\bv|^p + b_0 |D\bv|^p \log(e + |D\bv|) \right) \\
        &\hspace{10em} \leq c\, \dashint_{B_R} \left( a_0 |D\bv|^p + b_0 |D\bv|^p \log(e+|D\bv|)  \right) \, dx
    \end{aligned}
    \end{equation}
    and, for every $0 < \rho \leq R$,
    \begin{equation}\label{ineq : domain expansion}
    \begin{aligned}
        &(a_0+b_0) \dashint_{B_{\rho}} |D\bv-(D\bv)_{B_{\rho}}|^p \, dx \\
        &\hspace{6em} \leq c \left( \dfrac{\rho}{R} \right)^{\tilde{\alpha}p} \dashint_{B_R} \left(  a_0|D\bv|^p + b_0 |D\bv|^p \log(e+|D\bv|)  \right)\, dx,
    \end{aligned}
    \end{equation}
    where the positive constant $c$ depends only on $n$, $N$, and $p$.
\end{theorem}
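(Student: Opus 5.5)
The plan is to reduce Theorem \ref{Thm 3.1} to the regularity theory for autonomous functionals with $\varphi$-growth in the vectorial setting, namely \cite{Diening2009}. The Orlicz function is
$$
\varphi(t)=a_0t^p+b_0t^p\log(e+t),
$$
as in \eqref{rem : 2.1}.

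\textbf{Normalization.} Since minimizers of $\mathcal P_0$ are also minimizers of $\mathcal P_0/(a_0+b_0)$, we may assume $a_0+b_0=1$, i.e.\ $a_0=\theta$ and $b_0=1-\theta$ with $\theta\in[0,1]$. Both estimates \eqref{ineq : sup, int} and \eqref{ineq : domain expansion} are homogeneous of degree one in $(a_0,b_0)$, so this normalization is harmless. It remains to show that every structural constant entering \cite{Diening2009} can be bounded uniformly in $\theta$.

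\textbf{Verifying the assumptions of \cite{Diening2009}.} First, $\varphi$ satisfies \eqref{define : convex function}. By Remark \ref{cond : conv ft prop} applied to $t^p$ and $t^p\log(e+t)$, it also satisfies \eqref{cond : mono map} with $c_\varphi=c_\varphi(p)$. The additional Assumption 2.2 of \cite{Diening2009} is a H\"older-type condition on $\varphi''$: for $|s-t|\le t/2$,
$$
|\varphi''(s)-\varphi''(t)|\le c\,\varphi''(t)\left(\frac{|s-t|}{t}\right)^{\beta_0}.
$$
This is the step I expect to be the main obstacle, and I would verify it directly.

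By linearity it suffices to treat $\varphi_1(t)=t^p$ and $\varphi_2(t)=t^p\log(e+t)$ separately with uniform constants. The reason is that $a_0\varphi_1''+b_0\varphi_2''$ dominates each summand, so each error term is controlled by $\varphi''(t)$.

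For $\varphi_1$ the condition follows from the mean value theorem, with $\beta_0=1$.

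For $\varphi_2$, I would compute $\varphi_2'''$ explicitly and show $|\varphi_2'''(\tau)|\,\tau\le c(p)\,\varphi_2''(\tau)$ for all $\tau>0$. The second derivative is
$$
\varphi_2''(t)=p(p-1)t^{p-2}\log(e+t)+\frac{2p\,t^{p-1}}{e+t}-\frac{t^p}{(e+t)^2}.
$$
This is comparable to $t^{p-2}\log(e+t)$ with constants depending on $p$, using that $\frac{t}{e+t}\le 1\le\log(e+t)$. Differentiating once more gives terms of size at most $c\,t^{p-3}\log(e+t)$.

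Then the mean value theorem yields the condition with $\beta_0=1$. Here I use that $\varphi_2''(\tau)\simeq\varphi_2''(t)$ for $\tau\in[t/2,3t/2]$, which follows from the comparability with $t^{p-2}\log(e+t)$ together with \eqref{cond : log}.

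\textbf{Conclusion.} With these assumptions in place, \cite{Diening2009} gives $D\bv\in C^{0,\tilde\alpha}_{\rm loc}$, together with two estimates:
$$
\sup_{B_{R/2}}|\bV_\varphi(D\bv)|^2\le c\dashint_{B_R}|\bV_\varphi(D\bv)|^2\,dx
$$
and a Campanato decay
$$
\dashint_{B_\rho}|\bV_\varphi(D\bv)-(\bV_\varphi(D\bv))_{B_\rho}|^2\,dx\le c\left(\frac\rho R\right)^{2\tilde\alpha}\dashint_{B_R}|\bV_\varphi(D\bv)|^2\,dx.
$$
In both, the constants depend only on $n$, $N$, and the constants in \eqref{cond : mono map} and in Assumption 2.2. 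Hence they depend only on $n,N,p$.

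Using \eqref{ineq  : comparable}, the first estimate is exactly \eqref{ineq : sup, int}.

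For \eqref{ineq : domain expansion}, recall that $\varphi''(t)\ge c^{-1}(a_0+b_0)t^{p-2}$. Combining \eqref{ineq : 2.5} with \eqref{ineq : |V_p|} gives $(a_0+b_0)|\bV_p(Q)-\bV_p(R)|^2\le c|\bV_\varphi(Q)-\bV_\varphi(R)|^2$.

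In the case $p\ge2$, \eqref{2.10} yields
$$
(a_0+b_0)\dashint_{B_\rho}|D\bv-(D\bv)_{B_\rho}|^p\,dx\le c\dashint_{B_\rho}|\bV_\varphi(D\bv)-\bV_\varphi(Q_\rho)|^2\,dx,
$$
where $Q_\rho$ is defined by $\bV_\varphi(Q_\rho)=(\bV_\varphi(D\bv))_{B_\rho}$. Here I also use that $D\bv-(D\bv)_{B_\rho}$ is controlled by $D\bv-Q_\rho$ in $L^p$. The Campanato decay then gives \eqref{ineq : domain expansion} with exponent $2\tilde\alpha$. After relabeling this is at least $\tilde\alpha p$ for a smaller $\tilde\alpha$, provided $\rho\le R$.

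In the case $1<p<2$, I would work with $D\bv$ directly instead. The sup bound lets us estimate $|D\bv|\le\Lambda$ on $B_{R/2}$, and then the H\"older continuity of $D\bv$ with the scaled estimate $[D\bv]_{C^{0,\tilde\alpha}(B_{R/4})}\le cR^{-\tilde\alpha}\Lambda$ gives the result. This is again a consequence of the \cite{Diening2009} decay for $\bV_\varphi(D\bv)$ combined with the local Lipschitz continuity of $\bV_\varphi^{-1}$ on bounded sets, with constants uniform in $\theta$. For $\rho\ge R/4$ the estimate is trivial.
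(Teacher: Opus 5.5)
Your proposal is correct. The normalization $a_0+b_0=1$ is just the paper's bookkeeping of $\mu_0=a_0+b_0$. For the verification of \cite[Assumption 2.2]{Diening2009} via $t|\varphi'''(t)|\le c(p)\varphi''(t)$ and the mean value theorem, for \eqref{ineq : sup, int}, and for the case $p\ge2$ (with $\bV_\varphi(Q_\rho)=(\bV_\varphi(D\bv))_{B_\rho}$ and the exponent relabeled), you follow the paper's argument.

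The genuine difference is the case $1<p<2$.

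The paper stays at the integral level. It splits $|D\bv-\mathbf A|^p$ by H\"older's inequality with exponents $\frac2p$ and $\frac{2}{2-p}$. The factor $\dashint(|D\bv|+|\mathbf A|)^{p-2}|D\bv-\mathbf A|^2$ is controlled by the $\bV_\varphi$-excess through \eqref{ineq : 2.5}, and the factor $\dashint(|D\bv|+|\mathbf A|)^p$ by $\sup_{B_\rho}\varphi(|D\bv|)$. The excess decay and \eqref{ineq : sup, int} then conclude. Neither a Campanato characterization of $\bV_\varphi(D\bv)$ nor any pointwise analysis of $\bV_\varphi^{-1}$ is needed.

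Your route is pointwise. Campanato's theorem gives $[\bV_\varphi(D\bv)]_{\tilde\alpha;B_{R/4}}\le cR^{-\tilde\alpha}M^{1/2}$ with $M\coloneq\dashint_{B_R}\varphi(|D\bv|)\,dx$, and you pull this back to $D\bv$. This works and even gives a scaled H\"older bound for $D\bv$ itself. However, the justification you give (``Lipschitz on bounded sets, uniformly in $\theta$'') is not the uniformity that matters. You need the Lipschitz constant of $\bV_\varphi^{-1}$ on the range of $D\bv$ with explicit dependence on $\Lambda=\sup_{B_{R/2}}|D\bv|$; otherwise your constant depends on $\Lambda$.

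This is easy to supply. By \eqref{ineq : 2.5} and $\varphi''(t)\ge p(p-1)t^{p-2}$, for $p<2$ and $|\mathbf P|,|\mathbf Q|\le\Lambda$ one has $|\mathbf P-\mathbf Q|\le c\,\Lambda^{(2-p)/2}|\bV_\varphi(\mathbf P)-\bV_\varphi(\mathbf Q)|$. Combined with $\Lambda^p\le\varphi(\Lambda)\le cM$, this gives $[D\bv]_{\tilde\alpha;B_{R/4}}^p\le cR^{-\tilde\alpha p}M$, which is all you need for \eqref{ineq : domain expansion}. It is exactly the pointwise counterpart of the paper's H\"older splitting.

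Your sharper claim, with $\Lambda$ in place of $M^{1/p}$, is also true. It needs the extra observation that $\varphi''$ is almost decreasing when $p<2$, so that the logarithmic factors in $\varphi''(\Lambda)^{-1/2}$ and $\varphi(\Lambda)^{1/2}$ cancel. With only the crude lower bound on $\varphi''$, one loses a factor of up to $\log^{1/2}(e+\Lambda)$.
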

\begin{proof}
    Let $\varphi(t) \coloneq a_0 t^p + b_0 t^p \log(e+t)$, and let $\bV_{\varphi}$ be the vector field defined in \eqref{def : vector field}. By Remark \ref{cond : conv ft prop}, $\varphi$ satisfies \eqref{cond : mono map} with $c_{\varphi} \equiv c_{\varphi}(p)$, independently of $a_0$ and $b_0$.
    A direct computation also shows that $t|\varphi'''(t)| \leq c(p)\,\varphi''(t)$ for all $t>0$. Integrating this inequality, we see that $\varphi''(\tau)$ and $\varphi''(t)$ are comparable whenever $\frac{t}{2} \leq \tau \leq \frac{3t}{2}$, and hence
    $$
    |\varphi''(t+s)-\varphi''(t)| \leq c(p)\,\varphi''(t)\,\frac{|s|}{t} \quad \text{whenever } t>0 \text{ and } |s| \leq \frac{t}{2}.
    $$
    Thus $\varphi$ also satisfies the H\"older-type condition on $\varphi''$ in \cite[Assumption 2.2]{Diening2009}, again with constants depending only on $p$.
    Consequently, \eqref{ineq : sup, int} follows from \cite[Lemma 5.8]{Diening2009}, and it remains to prove the decay estimate \eqref{ineq : domain expansion}.
    Since the constants in \cite{Diening2009} depend only on $n$, $N$, and the constants in these two conditions, \cite[Theorem 6.4]{Diening2009} gives the excess decay estimate
    \begin{equation}
        \dashint_{B_{\rho}} |\bV_{\varphi}(D\bv) -  \left(\bV_{\varphi}(D\bv)\right)_{B_{\rho}} |^2 \, dx
        \leq c\left( \dfrac{\rho}{R} \right)^{\alpha_1} \dashint_{B_R} |\bV_{\varphi}(D\bv) -  \left(\bV_{\varphi}(D\bv)\right)_{B_R} |^2 \, dx,
    \end{equation}
    for every $\rho \in (0,R]$, where $c \geq 1$ and $\alpha_1 \in (0,1)$ depend only on $n$, $N$, and $p$.
    Moreover, for $t>0$ we have
    $$
    \varphi''(t) = a_0 p(p-1)t^{p-2} + b_0\left[ p(p-1)t^{p-2}\log(e+t) + \frac{t^{p-1}}{e+t}\left(2p-\frac{t}{e+t}\right) \right].
    $$
    Since $\log(e+t) \geq 1$ and $2p-\frac{t}{e+t} > 0$, setting $\mu_0 \coloneq a_0+b_0 > 0$, we obtain
    \begin{equation}\label{ineq : phi lower}
        \varphi''(t) \geq p(p-1)\mu_0 t^{p-2} \quad \text{and} \quad \varphi(t) \geq \mu_0 t^p \quad \text{for all } t>0.
    \end{equation}
    It suffices to consider the case $\rho \leq \frac{R}{2}$. Indeed, if $\frac{R}{2} < \rho \leq R$, then \eqref{ineq : phi lower} gives
    $$
    \mu_0 \dashint_{B_{\rho}} |D\bv-(D\bv)_{B_{\rho}}|^p \, dx \leq 2^{p+n} \mu_0 \dashint_{B_R} |D\bv|^p \, dx \leq 2^{p+n} \dashint_{B_R} \varphi(|D\bv|) \, dx,
    $$
    which implies \eqref{ineq : domain expansion} because $\frac{\rho}{R} > \frac{1}{2}$.
    Since $\bV_{\varphi}$ is a bijection of $\mr^{N \times n}$, there exists a unique matrix $\mathbf{A} \in \mr^{N \times n}$ such that
    \begin{equation}\label{def : bijection}
    \bV_{\varphi}(\mathbf{A}) = \left(\bV_{\varphi}(D\bv)\right)_{B_{\rho}}.
    \end{equation}
    We distinguish two cases. If $p \geq 2$, then \eqref{ineq : 2.5}, \eqref{def : bijection}, and \eqref{ineq : phi lower} give
    \begin{align*}
        &\dashint_{B_{\rho}} |D\bv - (D\bv)_{B_{\rho}}|^p \, dx \leq 2^p \dashint_{B_{\rho}} |D\bv-\mathbf{A}|^p \, dx\\
        &\hspace{1cm}\leq c\,\dashint_{B_{\rho}} (|D\bv| + |\mathbf{A}|)^{p-2} |D\bv - \mathbf{A}|^2 \, dx \leq \frac{c}{\mu_0}\, \dashint_{B_{\rho}} \varphi''(|D\bv|+|\mathbf{A}|) |D\bv-\mathbf{A}|^2\, dx\\
        &\hspace{1cm}\leq \frac{c}{\mu_0}\, \dashint_{B_{\rho}} |\bV_{\varphi}(D\bv) -\bV_{\varphi}(\mathbf{A})|^2 \, dx = \frac{c}{\mu_0}\, \dashint_{B_{\rho}} |\bV_{\varphi}(D\bv) -  \left(\bV_{\varphi}(D\bv)\right)_{B_{\rho}} |^2 \, dx\\
        &\hspace{1cm}\leq \frac{c}{\mu_0}\left( \dfrac{\rho}{R} \right)^{\alpha_1} \dashint_{B_R} |\bV_{\varphi}(D\bv) - \left(\bV_{\varphi}(D\bv)\right)_{B_R} |^2 \, dx \\
        &\hspace{1cm}\leq \frac{c}{\mu_0}\left( \dfrac{\rho}{R} \right)^{\alpha_1} \dashint_{B_R} (|\bV_{\varphi}(D\bv)|^2 + |\left(\bV_{\varphi}(D\bv)\right)_{B_R} |^2) \, dx\\
        &\hspace{1cm}\leq \frac{c}{\mu_0}\left( \dfrac{\rho}{R} \right)^{\alpha_1} \dashint_{B_R} (a_0|D\bv|^p + b_0 |D\bv|^p \log(e+|D\bv|))\, dx .
    \end{align*}
    Therefore, \eqref{ineq : domain expansion} holds with $\tilde{\alpha} \coloneq \frac{\alpha_1}{p}$. If $1 < p < 2$, then H\"older's inequality, \eqref{ineq  : comparable}, \eqref{ineq : phi lower}, and \eqref{ineq : sup, int} give
    \begin{align*}
        &\dashint_{B_{\rho}} |D\bv-(D\bv)_{B_{\rho}}|^p \, dx\\
        &\hspace{1mm}\leq 2^p \dashint_{B_{\rho}} (|D\bv|+|\mathbf{A}|)^{\frac{p(p-2)}{2}} |D\bv-\mathbf{A}|^p (|D\bv| + |\mathbf{A}|)^{\frac{p(2-p)}{2}} \, dx\\
        &\hspace{1mm} \leq c \left( \dashint_{B_{\rho}} (|D\bv| + |\mathbf{A}|)^{p-2} |D\bv-\mathbf{A}|^2 \, dx \right)^{\frac{p}{2}} \left( \dashint_{B_{\rho}} (|D\bv|+|\mathbf{A}|)^p \, dx \right)^{\frac{2-p}{2}}\\
        &\hspace{1mm}\leq \frac{c}{\mu_0} \left( \dashint_{B_{\rho}} \varphi''(|D\bv|+|\mathbf{A}|) |D\bv-\mathbf{A}|^2  dx\right)^{\frac{p}{2}} \left( \dashint_{B_{\rho}} (|\bV_{\varphi}(D\bv)|^2 + |\bV_{\varphi}(\mathbf{A})|^2) \, dx \right)^{\frac{2-p}{2}}\\
        &\hspace{1mm}\leq \frac{c}{\mu_0} \left( \dashint_{B_{\rho}} |\bV_{\varphi}(D\bv) -\bV_{\varphi}(\mathbf{A})|^2 \, dx \right)^{\frac{p}{2}}\\
        &\hspace{15mm}\times \left( \dashint_{B_{\rho}} (|\bV_{\varphi}(D\bv)|^2 + |(\bV_{\varphi}(D\bv))_{B_{\rho}}|^2) \, dx \right)^{\frac{2-p}{2}}\\
        &\hspace{1mm} \leq \frac{c}{\mu_0} \left( \dashint_{B_{\rho}} |\bV_{\varphi}(D\bv) - (\bV_{\varphi}(D\bv))_{B_{\rho}}|^2 \, dx \right)^{\frac{p}{2}} \sup_{B_{\rho}} |\bV_{\varphi}(D\bv)|^{2-p}\\
        &\hspace{1mm} \leq \frac{c}{\mu_0} \left(\dfrac{\rho}{R}\right)^{\frac{\alpha_1 p}{2}} \left( \dashint_{B_{R}} |\bV_{\varphi}(D\bv) - (\bV_{\varphi}(D\bv))_{B_{R}}|^2 \, dx \right)^{\frac{p}{2}}
        \left(  \sup_{B_{\rho}} \varphi(|D\bv|)\right)^{\frac{2-p}{2}}\\
        &\hspace{1mm} \leq  \frac{c}{\mu_0} \left(\dfrac{\rho}{R}\right)^{\frac{\alpha_1 p}{2}} \left( \dashint_{B_{R}} (|\bV_{\varphi}(D\bv)|^2 + |(\bV_{\varphi}(D\bv))_{B_{R}}|^2) \, dx \right)^{\frac{p}{2}}\\
        &\hspace{15mm}\times \left( \dashint_{B_R} (a_0 |D\bv|^p + b_0 |D\bv|^p \log(e+|D\bv|)) \, dx \right)^{\frac{2-p}{2}}\\
        &\hspace{1mm} \leq  \frac{c}{\mu_0} \left(\dfrac{\rho}{R}\right)^{\frac{\alpha_1 p}{2}} \dashint_{B_R} (a_0 |D\bv|^p + b_0 |D\bv|^p \log(e+|D\bv|)) \, dx.
    \end{align*}
    Hence \eqref{ineq : domain expansion} holds with $\tilde{\alpha} \coloneq \frac{\alpha_1}{2}$ when $1<p<2$.
\end{proof}

The next theorem shows that the higher integrability of the boundary datum is inherited by the minimizer of the frozen functional, uniformly with respect to $a_0$ and $b_0$.
\begin{theorem}\label{thm:hi-phi}
Let $p>1$, $a_0,b_0\ge0$ with $a_0+b_0>0$, and
$\varphi(t) \coloneq a_0t^p+b_0t^p\log(e+t)$.
Let $B \subset \mr^n$ be a ball, let $\bu \in W^{1,1}(B;\mr^N)$ with
$\varphi(|D\bu|) \in L^{1+\sigma}(B)$ for some $\sigma \in (0,\sigma_0]$, and let
$\bv \in \bu + W^{1,1}_0(B;\mr^N)$ be a minimizer of
$$
\bw \mapsto \int_B \varphi(|D\bw|)\,dx
$$
in this class. Then $\varphi(|D\bv|)\in L^{1+\sigma}(B)$ and
\begin{equation}\label{eq:hi-phi}
\dashint_B\varphi(|D\bv|)^{1+\sigma}\,dx \le c \,\dashint_B\varphi(|D\bu|)^{1+\sigma}\,dx ,
\end{equation}
where $\sigma_0\in(0,1)$ and $c\ge1$ depend only on $n,N,p$, and in particular not on $a_0,b_0$.
\end{theorem}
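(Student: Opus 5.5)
This is global higher integrability up to the boundary for the Dirichlet problem. The plan is to prove reverse H\"older inequalities with an increasing right-hand side, both in the interior and at the boundary, and then apply a Gehring-type lemma in the version allowing a right-hand side $g=\varphi(|D\bu|)$ (Giaquinta--Modica; cf.\ \cite[Section 6.4]{Giusti2003}). All constants must depend only on $n,N,p$; this is ensured by using only \eqref{cond : mono map} with $c_\varphi=c_\varphi(p)$ from Remark \ref{cond : conv ft prop}, together with the $\Delta_2$-type bounds $\varphi(\lambda t)\le \lambda^{q}\varphi(t)$ for $\lambda\ge1$, with $q=q(p)$, which follow from $\varphi'(t)t\le c(c_\varphi)\varphi(t)$.

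First, $\varphi(|D\bv|)\in L^1(B)$ by minimality, since $\int_B\varphi(|D\bv|)\le\int_B\varphi(|D\bu|)<\infty$. Extend $\bv-\bu$ by zero outside $B$, and set $F:=\varphi(|D\bv|)$ in $B$ and $F:=\varphi(|D\bu|)$ outside, working on a slightly larger ball $2B$. If $\bu$ is only given on $B$, we instead prove the estimate with balls $B_r(y)\cap B$, which requires the same computation. Fix $B_r(y)$ and a cutoff $\eta\in C_c^\infty(B_r)$ with $\eta\equiv1$ on $B_{r/2}$ and $|D\eta|\le c/r$.

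\emph{Interior case}, $B_r(y)\subset B$: test minimality with $\bw=\bv-\eta(\bv-(\bv)_{B_r})$. Convexity of $\varphi$ and the $\Delta_2$ bound give the Caccioppoli inequality
\[
\int_{B_{s}}\varphi(|D\bv|)\le \theta\int_{B_t}\varphi(|D\bv|)+c\int_{B_t}\varphi\Bigl(\tfrac{|\bv-(\bv)|}{t-s}\Bigr),
\]
the usual hole-filling variant. Lemma \ref{lem : iteration lemma} then yields $\dashint_{B_{r/2}}\varphi(|D\bv|)\le c\dashint_{B_r}\varphi(|\bv-(\bv)_{B_r}|/r)$, and Lemma \ref{rem: 2.2} bounds this by $c(\dashint_{B_r}\varphi(|D\bv|)^{d_1})^{1/d_1}$.

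\emph{Boundary case}, $B_r(y)\not\subset B$: test with $\bw=\bv-\eta(\bv-\bu)$, which is admissible since $\bv-\bu\in W^{1,1}_0(B)$. This gives
\[
\int_{B_{s}\cap B}\varphi(|D\bv|)\le\theta\int_{B_t\cap B}\varphi(|D\bv|)+c\int_{B_t\cap B}\varphi(|D\bu|)+c\int_{B_t}\varphi\Bigl(\tfrac{|\bv-\bu|}{t-s}\Bigr),
\]
and after iteration the last term is taken over $B_r$ with $r$ in place of $t-s$. This is the step I expect to be the main obstacle: controlling $\dashint_{B_r}\varphi(|\bv-\bu|/r)$ by $(\dashint_{B_r}F^{d})^{1/d}$ without a mean-value subtraction. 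If $B_{r/2}(y)\cap B=\emptyset$, the estimate is trivial. Otherwise $\bv-\bu$ vanishes on $B_{r}\setminus B$, which contains a portion of measure $\ge c(n)|B_r|$ (uniform exterior measure density of a ball, for $r$ up to the radius of $B$). A Sobolev--Poincar\'e inequality of the type of Lemma \ref{rem: 2.2} for functions vanishing on such a set then applies. I would derive it from Lemma \ref{rem: 2.2} by bounding $\varphi(|(\mf)_{B_r}|/r)$ by the average over the zero set, using Jensen's inequality and the density bound. This gives
\[
\dashint_{B_{r/2}}F\le c\Bigl(\dashint_{B_r}F^{d_1}\Bigr)^{1/d_1}+c\dashint_{B_r}\varphi(|D\bu|)\chi_{B}\, dx,
\]
valid for all small balls, and also for balls lying outside $B$, trivially.

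Finally, I apply the Gehring lemma with right-hand side $g=\varphi(|D\bu|)\in L^{1+\sigma}$. It yields $\sigma_0=\sigma_0(n,N,p)$ such that for $\sigma\le\sigma_0$, $F\in L^{1+\sigma}$ with
\[
\dashint F^{1+\sigma}\le c\Bigl(\dashint F\Bigr)^{1+\sigma}+c\dashint g^{1+\sigma}.
\]
By a covering argument on $B$ (scaling $B$ to the unit ball), and using $\dashint_B F\le \dashint_B\varphi(|D\bu|)$ by minimality together with Jensen's inequality, we obtain \eqref{eq:hi-phi}.
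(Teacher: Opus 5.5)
Your overall route is the paper's: a Caccioppoli inequality from the competitor $\bv-\eta(\bv-\bu)$ near $\partial B$, then a Sobolev--Poincar\'e inequality without mean value for the zero extension of $\bv-\bu$, then Gehring's lemma with right-hand side $\varphi(|D\bu|)$, and finally minimality plus Jensen. In the interior you test with $\bv-(\bv)_{B_r}$ instead of the paper's $\bv-\bu-(\bv-\bu)_{B_r}$. That is fine, and it even removes the $\bu$-term there.

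\textbf{The gap.} The step you flagged as the main obstacle rests on a false geometric claim. Your dichotomy is $B_r(y)\subset B$ versus $B_r(y)\not\subset B$. In the second case (with $B_{r/2}(y)\cap B\neq\emptyset$) you assert that $|B_r(y)\setminus B|\ge c(n)|B_r|$. The exterior density of a ball holds for balls centered on $\partial B$ (or outside $B$), not for every ball that merely protrudes. Take $y\in B$ with $\operatorname{dist}(y,\partial B)=r-\epsilon$. Then $B_r(y)\setminus B$ is a thin lens whose measure tends to $0$ as $\epsilon\to0$, while $B_{r/2}(y)\subset B$.

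So the Sobolev--Poincar\'e inequality for functions vanishing on a fixed fraction of the ball cannot be invoked on $B_r(y)$. For $p<n$, no uniform inequality of this kind holds there at all: a function equal to $1$ on most of $B_r(y)$ and vanishing only near the small lens can have energy arbitrarily small compared with $\dashint_{B_r}\varphi(|f|/r)\,dx$. Your boundary reverse H\"older inequality is therefore unjustified for this family of balls, and Gehring's lemma needs it for all small balls.

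\textbf{The fix.} The repair is exactly what the paper does. Split instead according to whether $B_{2r}(y)\subset B$; the interior Caccioppoli argument is unaffected. If $B_{2r}(y)\not\subset B$, pick $y'\in\partial B$ with $|y-y'|<2r$. Then $B_r(y)\subset B_{3r}(y')$, and by convexity of $B$ the set $B_{3r}(y')\setminus B$ contains the half-ball cut off by the tangent hyperplane at $y'$. Apply the mean-free Sobolev--Poincar\'e inequality on $B_{3r}(y')$, whose derivation from Lemma \ref{rem: 2.2} you sketched correctly. Then pass to $B_{5r}(y)$, using $|B\cap B_\varrho(y)|\ge 2^{-n}|B_\varrho|$ if you work with averages over $B\cap B_\varrho$. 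The resulting reverse H\"older inequality has its right-hand side on $B_{5r}(y)$, which is still admissible for Gehring's lemma, and the rest of your argument then goes through.
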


\begin{proof}
By Remark \ref{cond : conv ft prop}, $\varphi$ satisfies \eqref{cond : mono map} with $c_\varphi \equiv c_\varphi(p)$, independently of $a_0,b_0$. Hence, all constants below depend only on $n,N,c_\varphi$, and in particular are uniform with respect to $a_0,b_0$. We write $\varphi(\mathbf{P})$ for $\varphi(|\mathbf{P}|)$. Let $r_0$ be the radius of $B$. For $y\in\overline B$ and $0<\varrho\le r_0$, the set $B\cap B_\varrho(y)$ contains a ball of radius $\varrho/2$, and hence
$$
|B\cap B_\varrho(y)| \ge 2^{-n}|B_\varrho|.
$$

Fix $x_0\in\overline B$ and $0<r\le \frac{r_0}{5}$. Let $\eta\in C_0^\infty(B_r(x_0))$ satisfy
$$
0 \le \eta \le 1, \qquad \eta \equiv 1\ \text{on }B_{r/2}(x_0),
\qquad |D\eta| \le \frac{4}{r}.
$$
We set $\mathbf{k} \coloneq (\bv-\bu)_{B_r(x_0)}$ if $B_{2r}(x_0)\subset B$ and $\mathbf{k} \coloneq 0$ otherwise, and we define
$$
\bw \coloneq \bv-\eta(\bv-\bu-\mathbf{k}).
$$
If $\mathbf{k}=0$, then $\eta(\bv-\bu) \in W_0^{1,1}(B;\mathbb R^N)$, since $\bv-\bu \in W_0^{1,1}(B;\mathbb R^N)$ and $\eta$ is bounded and Lipschitz. If $B_{2r}(x_0)\subset B$, then $\eta(\bv-\bu-\mathbf{k})$ has compact support in $B_r(x_0) \subset B$. In both cases $\bw-\bv \in W_0^{1,1}(B;\mathbb R^N)$, so $\bw$ is an admissible competitor for $\bv$. Moreover, $\bw = \bv$ outside $B_r(x_0)$. Hence, by the minimality of $\bv$,
$$
\int_{B\cap B_r(x_0)} \varphi(D\bv) \,dx \le \int_{B\cap B_r(x_0)} \varphi(D\bw) \,dx.
$$
Since
$$
D\bw = (1-\eta) D\bv + \eta D\bu-(\bv-\bu-\mathbf{k}) \otimes D\eta,
$$
the structural estimates for $\varphi$ imply
$$
\varphi(D\bw) \le c\,(1-\eta)^p\varphi(D\bv) +c\,\varphi(D\bu) +c\,\varphi\left(\frac{|\bv-\bu-\mathbf{k}|}{r}\right).
$$
Therefore, using that $\eta \equiv 1$ on $B_{r/2}(x_0)$ and applying the standard hole-filling argument together with Lemma \ref{lem : iteration lemma}, we obtain
\begin{equation}\label{eq:cacc-uniform}
\dashint_{B\cap B_{r/2}(x_0)}\varphi(D\bv)\,dx \le c\,  \dashint_{B\cap B_r(x_0)}\varphi(D\bu)\,dx + c\,\dashint_{B\cap B_r(x_0)} \varphi\left(\frac{|\bv-\bu-\mathbf{k}|}{r}\right)\,dx.
\end{equation}

We next estimate the last term in \eqref{eq:cacc-uniform}, including the case where $x_0$ is close to $\partial B$. If $B_{2r}(x_0)\subset B$, then $\mathbf{k}=(\bv-\bu)_{B_r(x_0)}$, and Lemma \ref{rem: 2.2} applied to $\mathbf{f}=\bv-\bu$ gives
$$
\dashint_{B_r(x_0)} \varphi\left(\frac{|\bv-\bu-\mathbf{k}|}{r}\right)\,dx \le  c \left( \dashint_{B_r(x_0)} [\varphi(|D\bv-D\bu|)]^{d_1}\,dx \right)^{\frac{1}{d_1}},
$$
and hence, by the structural inequality for $\varphi$,
$$
\dashint_{B_r(x_0)} \varphi \left(\frac{|\bv-\bu-\mathbf{k}|}{r}\right)\,dx \le c\left( \dashint_{B_r(x_0)} [\varphi(D\bv)]^{d_1}\,dx \right)^{\frac{1}{d_1}}
+ c\,\dashint_{B_r(x_0)}\varphi(D\bu)\,dx.
$$

Suppose now that $B_{2r}(x_0)\not\subset B$, so that $\mathbf{k}=0$. Choose $x_0'\in\partial B$ such that $|x_0-x_0'|<2r$. Then
$$
B\cap B_r(x_0)\subset B\cap B_{3r}(x_0').
$$
Set
$$
\mathcal{U} \coloneq B\cap B_{3r}(x_0')
$$
and define the zero extension of $\bv-\bu$ to $B_{3r}(x_0')$ by
$$
\widetilde{\mathbf{f}}(x) \coloneq
\begin{cases}
\bv(x)-\bu(x), & x\in \mathcal{U},\\
0,&x\in B_{3r}(x_0')\setminus B.
\end{cases}
$$
Since $\bv-\bu$ has zero trace on $\partial B$, we have
$$
\widetilde{\mathbf{f}} \in W^{1,1}(B_{3r}(x_0');\mathbb R^N)
$$
and
$$
D\widetilde{\mathbf{f}}=
\begin{cases}
D\bv-D\bu,&x\in \mathcal{U},\\
0,&x\in B_{3r}(x_0')\setminus B.
\end{cases}
$$
Moreover, since $x_0'\in\partial B$ and $B$ is convex, the set $B_{3r}(x_0')\setminus B$ contains the half-ball cut off by the tangent hyperplane to $\partial B$ at $x_0'$. In particular,
$$
\left|B_{3r}(x_0')\setminus B\right| \ge c|B_{3r}|.
$$
Since $\widetilde{\mathbf{f}}$ vanishes on a fixed proportion of $B_{3r}(x_0')$, Lemma \ref{rem: 2.2} yields the following Sobolev--Poincar\'e inequality without the mean value:
$$
\dashint_{B_{3r}(x_0')} \varphi\left(\frac{|\widetilde{\mathbf{f}}|}{r}\right)\,dx \le 
c \left( \dashint_{B_{3r}(x_0')} [\varphi(|D\widetilde{\mathbf{f}}|)]^{d_1}\,dx \right)^{\frac{1}{d_1}}.
$$
Consequently,
$$
\dashint_{B\cap B_{3r}(x_0')} \varphi\left(\frac{|\bv-\bu|}{r}\right)\,dx \le c\left( \dashint_{B\cap B_{3r}(x_0')} [\varphi(|D\bv-D\bu|)]^{d_1}\,dx \right)^{\frac{1}{d_1}}.
$$
Using
$$
\varphi(|D\bv-D\bu|) \le c \left(\varphi(D\bv)+\varphi(D\bu)\right)
$$
and the fact that $d_1<1$, we further obtain
$$
\begin{aligned}
\dashint_{B\cap B_{3r}(x_0')} \varphi \left( \frac{|\bv-\bu|}{r} \right)\,dx &\le c\left(\dashint_{B\cap B_{3r}(x_0')} [\varphi(D\bv)]^{d_1}\,dx\right)^{\frac{1}{d_1}}\\
&\quad + c\left( \dashint_{B\cap B_{3r}(x_0')} [\varphi(D\bu)]^{d_1}\,dx \right)^{\frac{1}{d_1}} \\
&\le c \left( \dashint_{B\cap B_{3r}(x_0')} [\varphi(D\bv)]^{d_1}\,dx \right)^{\frac{1}{d_1}}\\
&\quad + c\,\dashint_{B\cap B_{3r}(x_0')}\varphi(D\bu)\,dx.
\end{aligned}
$$
Since $B\cap B_{3r}(x_0') \subset B\cap B_{5r}(x_0)$ and $5r \le r_0$, the lower bound for $|B\cap B_\varrho(y)|$ allows us to replace the averages over $B\cap B_r(x_0)$ and $B\cap B_{3r}(x_0')$ by averages over $B\cap B_{5r}(x_0)$. Combining the interior and boundary cases in this way, we obtain, for every $x_0\in\overline B$ and $0 < r \le \frac{r_0}{5}$,
\begin{equation}\label{eq:rh-uniform}
\dashint_{B\cap B_{r/2}(x_0)}\varphi(D\bv)\,dx \le c\left( \dashint_{B\cap B_{5r}(x_0)} [\varphi(D\bv)]^{d_1}\,dx \right)^{\frac{1}{d_1}} + c \, \dashint_{B\cap B_{5r}(x_0)}\varphi(D\bu)\,dx.
\end{equation}

If we extend $\varphi(D\bv)$ and $\varphi(D\bu)$ by zero outside $B$, then \eqref{eq:rh-uniform} becomes a reverse H\"older inequality with a right-hand side. Hence, by the corresponding version of Gehring's lemma, there exists
$$
\sigma_0=\sigma_0(n,N,c_\varphi)\in(0,1)
$$
such that, for every $\sigma\in(0,\sigma_0]$,
$$
\dashint_B\varphi(D\bv)^{1+\sigma}\,dx \le c\left( \,\dashint_B\varphi(D\bv)\,dx\right)^{1+\sigma} + c\, \dashint_B\varphi(D\bu)^{1+\sigma}\,dx.
$$
Finally, by the minimality of $\bv$,
$$
\int_B\varphi(D\bv)\,dx \le \int_B\varphi(D\bu)\,dx,
$$
and Jensen's inequality gives
$$
\dashint_B\varphi(D\bu)\,dx \le \left( \,\dashint_B\varphi(D\bu)^{1+\sigma}\,dx \right)^{\frac{1}{1+\sigma}}.
$$
Combining these inequalities with the preceding estimate yields \eqref{eq:hi-phi}.
\end{proof}

\section{\bf Higher integrability}\label{section 4}
In this section, we prove the higher integrability of local minimizers. Let $\Omega \subset \mr^n$ be a bounded domain. We consider the integral functional
\begin{equation}\label{def : functional 4}
    W^{1,1}(\Omega) \ni \bw \mapsto \mathcal{F}(\bw, \Omega) \coloneq \int_{\Omega} F(x,\bw,D\bw) \, dx
\end{equation}
where the energy density $F:\Omega \times \mr^N \times \mr^{N \times n} \rightarrow \mr$ is a Carath\'eodory function satisfying the following conditions:
\begin{enumerate}[label=\textbf{(H\arabic*)}, leftmargin=1.5cm, itemsep=0.5em]
    \item There exist constants $0 < \nu \leq 1 \leq L$ such that
        \begin{equation}\label{cond : H and F}
        \nu H(x,\mathbf{P}) \leq F(x,\bv,\mathbf{P}) \leq L H(x,\mathbf{P})
        \end{equation}
        for all $x\in \Omega$, $\bv \in \mr^N$, and $\mathbf{P} \in \mr^{N \times n}$, where the model integrand is given by
        \begin{equation}\label{def : function H}
        H(x,\mathbf{P}) \coloneq a(x)|\mathbf{P}|^p + b(x) |\mathbf{P}|^p \log(e+|\mathbf{P}|).
        \end{equation}
        By abuse of notation, we use the same expression $H(x,\mathbf{z})$ for vectors $\mathbf{z} \in \mr^N$ and $H(x,t)$ for scalars $t \geq 0$.
    \item The modulating coefficient $b(\cdot)$ is $\log$-H\"older continuous, that is, its modulus of continuity $\omega_b(\cdot)$ from \eqref{cond : modulus of b} satisfies
    \begin{equation}\label{cond : Logarithmic}
        \limsup_{r \rightarrow 0} \omega_b(r) \log\left( \dfrac{1}{r} \right) < \infty,
    \end{equation}
    or equivalently, since $b$ is bounded,
    \begin{equation}\label{cond : Logarithmic 2}
        \omega_b(r) \log\left( \dfrac{1}{r} \right) \leq \tilde{L} \; \text{ for every } r \leq 1
    \end{equation}
    for some constant $\tilde{L} \geq 0$. Without loss of generality, we may assume that $\omega_a(\cdot)$ and $\omega_b(\cdot)$ are concave.
\end{enumerate}

The following theorem gives the higher integrability of local minimizers of $\mathcal{F}$.
\begin{theorem}\label{thm : Gehring's theory}
    Let $\bu \in W^{1,p}(\Omega;\mr^N)$ be a local minimizer of the functional $\mathcal{F}$ defined in \eqref{def : functional 4} under the assumptions \textbf{(H1)} and \textbf{(H2)}. By \eqref{cond : omega_a} and \eqref{cond : Logarithmic}, there exists $R_* \in (0,\frac{1}{e}]$, depending only on $\mu$, $\omega_a$, and $\omega_b$, such that
    \begin{equation}\label{def : R_*}
        4\left(\omega_a(R)+\omega_b(R)\right)<\mu \quad \text{for all } 0<R\le R_*.
    \end{equation}
    Then there exists an exponent $\delta_g > 0$, depending only on $n$, $N$, $p$, $\nu$, $L$, $\mu$, $\|a\|_{L^\infty(\Omega)}$, $\| D\bu \|_{L^p}$, and $\tilde{L}$, such that
    \begin{equation}
        H(x,D\bu) \in L_{\operatorname{loc}}^{1+\delta_{g}}(\Omega).
    \end{equation}
    More precisely, the local reverse H\"older inequality
    \begin{equation}\label{ineq : reverse holder}
        \left( \dashint_{B_{R/2}} [H(x,D\bu)]^{1+\delta_g} \, dx \right)^{\frac{1}{1+\delta_g}} \leq c\, \dashint_{B_R} H(x, D\bu) \, dx
    \end{equation} 
    holds for every ball $B_R \subset \Omega$ with $R \le R_*$ and for a constant $c$ depending only on $n$, $N$, $p$, $\nu$, $L$, $\mu$, $\|a\|_{L^\infty(\Omega)}$, $\| D\bu \|_{L^p}$, and $\tilde{L}$. In particular, if $p > \dfrac{n}{1+\delta_g}$, then $\bu$ is locally H\"older continuous.
\end{theorem}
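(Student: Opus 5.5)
The plan is to prove a Caccioppoli inequality for $\bu$, then a Sobolev--Poincaré inequality with exponent $d<1$ for the right-hand side, and finally to apply Lemma \ref{Lem : Gehring lemma}. The only non-standard point is that $H(x,\cdot)$ is not a fixed Young function. On a ball $B_R$ with $R\le R_*$ we therefore freeze the coefficients at their infima $a_i(R)=\inf_{B_R}a$ and $b_i(R)=\inf_{B_R}b$, and set
\[
\varphi_R(t)\coloneq a_i(R)t^p+b_i(R)t^p\log(e+t).
\]
By \eqref{def : R_*} we have $a_i(R)+b_i(R)\ge \mu-\omega_a(R)-\omega_b(R)\ge \mu/2$, so by Remark \ref{cond : conv ft prop} $\varphi_R$ satisfies \eqref{cond : mono map} with $c_\varphi=c(p)$. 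Lemma \ref{rem: 2.2} then applies with $d_1=d_1(n,N,p)$.

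\textbf{Step 1 (Caccioppoli).} Fix $R/2\le r<s\le R$ and a cutoff $\eta$ between $B_r$ and $B_s$. Test minimality with $\bw=\bu-\eta(\bu-(\bu)_{B_R})$ and use \textbf{(H1)} together with the $\Delta_2$-type bound $H(x,\lambda t)\le c\lambda^{p}\log(e+\lambda)\ldots$; in practice we only need $H(x,t_1+t_2)\le c(H(x,t_1)+H(x,t_2))$ for $c=c(p)$. Hole filling and Lemma \ref{lem : iteration lemma} then give
\[
\dashint_{B_{R/2}}H(x,D\bu)\,dx\le c\dashint_{B_R}H\Bigl(x,\frac{|\bu-(\bu)_{B_R}|}{R}\Bigr)dx .
\]
To remove the $(s-r)^{-1}$ inside the logarithm, use \eqref{cond : log}: $\log(e+At)\le A\log(e+t)$ with $A=R/(s-r)$, which produces a power $(s-r)^{-p-1}$. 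This is acceptable for the iteration lemma.

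\textbf{Step 2 (passing from $H$ to $\varphi_R$ on the right).} Write
\[
H(x,t)\le \varphi_R(t)+\omega_a(2R)t^p+\omega_b(2R)t^p\log(e+t).
\]
The $\omega_a$ term is absorbed into $\varphi_R(t)$ up to a factor $c\|a\|_\infty/\mu$, using $\varphi_R(t)\ge \frac{\mu}{2}t^p$ (from $\log\ge1$). The $\omega_b$ term is the main obstacle, because $\log(e+t)$ is not bounded by a constant times $\varphi_R(t)/t^p$ when $a_i(R)$ is large and $b_i(R)=0$. Take $t=|\bu-(\bu)_{B_R}|/R$ and split according to whether $t\le R^{-n}$ or not, say. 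Where $t\le R^{-n}$ we have $\log(e+t)\le c\,n\log(1/R)$, so by \eqref{cond : Logarithmic 2} this part is at most $c\tilde L\, t^p\le c(\tilde L/\mu)\varphi_R(t)$. Where $t>R^{-n}$ I would instead use the Sobolev embedding $\bu\in L^{p^*}$ (or $L^{q}$, $q>p$, if $p\ge n$). Since $\|D\bu\|_{L^p}$ is bounded, $R^{-p}\int|\bu-(\bu)_{B_R}|^p$ over the set $\{t>R^{-n}\}$ is controlled through its measure, and $\log t\lesssim \log(1/R)+\log(R\,t)$. Alternatively, and more cleanly, I would use \eqref{L log L}: $\dashint f\log(e+f/(f)_{B_R})$ with $f=t^p$, together with $\log(e+t)\le \log(e+f/(f)_{B_R})+\log(e+(f)_{B_R})$. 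The factor $(f)_{B_R}\lesssim R^{-n}\|D\bu\|_{L^p}^p/\mu$ yields $\omega_b(R)\log(1/R)\le\tilde L$, which explains the dependence of the constants on $\|D\bu\|_{L^p}$. The $L\log L$ term is bounded via \eqref{L log L} with some $q=1/d$-type exponent fed by the Sobolev--Poincaré inequality at a higher exponent $p$. This part is where I expect to spend the effort, and I would run it first with $\varphi(t)=t^p$ in Lemma \ref{rem: 2.2}.

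\textbf{Step 3 (Sobolev--Poincaré and Gehring).} After Step 2 the right-hand side is bounded by $c\dashint_{B_R}\varphi_R(|\bu-(\bu)_{B_R}|/R)$ plus terms already of reverse Hölder form. Lemma \ref{rem: 2.2} bounds this by $c(\dashint_{B_R}\varphi_R(|D\bu|)^{d_1})^{1/d_1}$, and $\varphi_R(t)\le H(x,t)$ pointwise on $B_R$. This gives
\[
\dashint_{B_{R/2}}H(x,D\bu)\le c\Bigl(\dashint_{B_R}H(x,D\bu)^{d}\Bigr)^{1/d}
\]
with $d=\max\{d_1,d_2\}<1$. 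Lemma \ref{Lem : Gehring lemma} applied to $f=H(\cdot,D\bu)$ gives \eqref{ineq : reverse holder}. Finally, $\mu|D\bu|^p\le H$ gives $D\bu\in L^{p(1+\delta_g)}_{\rm loc}$, and Morrey's embedding yields Hölder continuity when $p(1+\delta_g)>n$.
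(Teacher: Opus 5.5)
Your proof is correct, but it is organized differently from the paper's.

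\textbf{How the paper proceeds.} After the same Caccioppoli inequality, the paper splits into two cases.
\begin{enumerate}
\item If $b_i(R)\le 2\omega_b(R)$, it shows that $a(x)\le 2a_i(R)$ and $b(x)\le 4\omega_b(R)$ on $B_R$. It then treats the logarithmic term with exactly your device: split $\log(e+t)$ using $(t^p)_{B_R}\lesssim R^{-n}\|D\bu\|_{L^p}^p$ and $\omega_b(R)\log(1/R)\le\tilde L$, then apply \eqref{L log L} and the Sobolev--Poincar\'e inequality.
\item If $b_i(R)>2\omega_b(R)$, it bounds $H(x,\cdot)$ above by the frozen function with coefficients $a_m(R)=\max\{2a_i(R),4\omega_a(R)\}$ and $b_i(R)$, and applies Lemma \ref{rem: 2.2}. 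Since $a_m(R)$ may exceed $a(x)$, it then needs the further subcases $b_i(R)\ge\mu/4$ and $b_i(R)<\mu/4$ to bound that frozen function of $|D\bu|$ back by $cH(x,D\bu)$.
\end{enumerate}

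\textbf{What your route does instead.} You freeze at the infima and write
\[
H(x,t)\le\varphi_R(t)+\omega_a(2R)t^p+\omega_b(2R)t^p\log(e+t).
\]
This makes the last step trivial, since $\varphi_R(|D\bu|)\le H(x,D\bu)$ pointwise on $B_R$. The $a$-oscillation is absorbed by $\varphi_R(t)\ge\frac{\mu}{2}t^p$. The $b$-oscillation is handled on every ball by the log-H\"older/$L\log L$ argument, so no case analysis is needed.

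\textbf{Comparison.} The paper's split shows that log-H\"older continuity of $b$ is only used on balls where $b_i(R)\lesssim\omega_b(R)$; on the other balls mere continuity of $b$ suffices. Since (H2) is assumed globally, your shorter route proves the same statement with the same dependence of the constants, including on $\|D\bu\|_{L^p}$ and $\tilde L$.

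\textbf{Two small corrections.}
\begin{itemize}
\item The lower bound for $a_i(R)+b_i(R)$ should involve oscillations over distance $2R$, e.g.\ $a_i(R)+b_i(R)\ge\mu-\omega_b(2R)>\mu/2$. This follows from concavity and \eqref{def : R_*}.
\item For the $L\log L$ term, Lemma \ref{rem: 2.2} with $\varphi(t)=t^p$ gives no gain $q>p$ on the left, and \eqref{L log L} needs such a gain. Use the classical Sobolev--Poincar\'e inequality
\[
\Bigl(\frac{1}{|B_R|}\int_{B_R}\Bigl|\frac{\bu-(\bu)_{B_R}}{R}\Bigr|^q dx\Bigr)^{1/q}\le c\Bigl(\frac{1}{|B_R|}\int_{B_R}|D\bu|^{q_*}dx\Bigr)^{1/q_*},\qquad q_*<p<q,
\]
as the paper does, and set $d=\max\{d_1,q_*/p\}$.
\end{itemize}
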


We prepare several lemmas for the proof of Theorem \ref{thm : Gehring's theory}. The first one is the following quasi-convexity property of $H$.
\begin{lemma}
Let $C_p \coloneq 2^p$. Then
\begin{equation}\label{lem:quasi-convexity}
H(x, \mathbf{A}+\mathbf{B}) \leq C_p \big( H(x, \mathbf{A}) + H(x, \mathbf{B}) \big)
\end{equation}
for all $x \in \Omega$ and $\mathbf{A}, \mathbf{B} \in \mathbb{R}^{N \times n}$.
\end{lemma}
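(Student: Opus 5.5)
The plan is to reduce to scalar inequalities in $x=|\mathbf{A}|$, $y=|\mathbf{B}|$ and treat the two terms of $H$ separately. Since $a(x),b(x)\ge 0$, it suffices to show
$$
(x+y)^p\le 2^p(x^p+y^p),\qquad \psi(x+y)\le 2^p\big(\psi(x)+\psi(y)\big),\qquad \psi(t)\coloneq t^p\log(e+t),
$$
for all $x,y\ge0$. Indeed, $t\mapsto t^p$ and $\psi$ are nondecreasing and $|\mathbf{A}+\mathbf{B}|\le x+y$. Multiplying the first inequality by $a(x)$ and the second by $b(x)$, then adding, gives \eqref{lem:quasi-convexity} with $C_p=2^p$.

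The power term follows from convexity of $t\mapsto t^p$ for $p>1$:
$$
(x+y)^p\le 2^{p-1}(x^p+y^p)\le 2^p(x^p+y^p).
$$

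For the logarithmic term, I would avoid any doubling estimate such as $\psi(2s)\le c\,\psi(s)$. That route loses a factor, because $\log(e+2s)>\log(e+s)$. Instead I would first use the elementary bound
$$
\log(e+x+y)\le\log(e+x)+\log(e+y),
$$
which holds because $(e+x)(e+y)\ge e+x+y$. Combining it with the power estimate gives
$$
\psi(x+y)\le 2^{p-1}(x^p+y^p)\big(\log(e+x)+\log(e+y)\big).
$$
Expanding the right-hand side produces the diagonal terms $\psi(x)+\psi(y)$ and the cross terms $x^p\log(e+y)+y^p\log(e+x)$.

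The only real step is controlling the cross terms. I plan to use the rearrangement inequality. The sequences $(x^p,y^p)$ and $(\log(e+x),\log(e+y))$ are similarly ordered, since both $t\mapsto t^p$ and $t\mapsto\log(e+t)$ are increasing. Assuming without loss of generality $x\le y$, we get
$$
\big(y^p-x^p\big)\big(\log(e+y)-\log(e+x)\big)\ge0,
$$
which is exactly
$$
x^p\log(e+y)+y^p\log(e+x)\le \psi(x)+\psi(y).
$$
Therefore
$$
\psi(x+y)\le 2^{p-1}\cdot 2\big(\psi(x)+\psi(y)\big)=2^p\big(\psi(x)+\psi(y)\big).
$$
This yields the claim with the sharp constant $C_p=2^p$, independent of $a(\cdot)$ and $b(\cdot)$.
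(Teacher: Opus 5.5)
Your proof is correct, and for the logarithmic term it takes a genuinely different route from the paper.

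The paper handles the power term exactly as you do. For $\Phi(t)=t^p\log(e+t)$, however, it \emph{does} use a doubling bound. With $s=\frac{|\mathbf{A}|+|\mathbf{B}|}{2}$ it writes $(|\mathbf{A}|+|\mathbf{B}|)^p\log(e+|\mathbf{A}|+|\mathbf{B}|)=2^p s^p\log(e+2s)\le 2^{p+1}\Phi(s)$, using $\log(e+2s)\le 2\log(e+s)$. It then applies Jensen's inequality to the convex function $\Phi$ at the midpoint, $\Phi(s)\le\frac12(\Phi(|\mathbf{A}|)+\Phi(|\mathbf{B}|))$. The factor $\frac12$ from Jensen cancels the factor $2$ from doubling, so that route also lands exactly on $2^p$. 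Your remark that doubling ``loses a factor'' is only true of the cruder estimate $\psi(x+y)\le\psi(2\max\{x,y\})$.

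You replace convexity of $\Phi$, which the paper takes for granted, by two elementary ingredients: the bound $(e+x)(e+y)\ge e+x+y$, and Chebyshev's sum inequality for the similarly ordered pairs $(x^p,y^p)$ and $(\log(e+x),\log(e+y))$. This needs only monotonicity of the two factors of $\psi$, but it is tailored to the product structure $t^p\cdot\log(e+t)$. The paper's argument is the standard Orlicz-space one. For any convex $\Phi$ with $\Phi(2t)\le K\Phi(t)$ it yields $\Phi(x+y)\le\frac{K}{2}(\Phi(x)+\Phi(y))$, so it transfers verbatim to other convex doubling integrands.

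Two minor points. First, drop the word ``sharp''. The lemma only asserts that $2^p$ works, your argument says nothing about optimality, and $2^p$ is in fact not optimal: the best constant valid for all admissible $a,b$ lies strictly between $2^{p-1}$ and $2^p$. For $b\equiv 0$ it is $2^{p-1}$, and for the logarithmic part your chain has slack; e.g.\ $(e+x)(e+y)\ge e(e+x+y)$ even gives $\log(e+x+y)\le\log(e+x)+\log(e+y)-1$. Second, you use $x$ both for $|\mathbf{A}|$ and for the point where $a(x)$ and $b(x)$ are evaluated; rename the scalar variables to avoid the clash.
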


\begin{proof}
    Since $t\mapsto t^p$ is convex on $[0,\infty)$ for $p\ge1$, Jensen's inequality gives
    $$
    |\mathbf{A}+\mathbf{B}|^p \leq (|\mathbf{A}|+|\mathbf{B}|)^p \leq 2^{p-1} (|\mathbf{A}|^p + |\mathbf{B}|^p).    
    $$
    On the other hand, since $\Phi(t) \coloneq t^p \log(e+t)$ is convex on $[0, \infty)$ and $\log(e+2t) \leq 2\log(e+t)$, Jensen's inequality yields
    \begin{align*}
    (|\mathbf{A}|+|\mathbf{B}|)^p \log(e+|\mathbf{A}|+|\mathbf{B}|) 
    &= 2^p \left( \frac{|\mathbf{A}|+|\mathbf{B}|}{2} \right)^p \log\left(e + 2 \cdot \frac{|\mathbf{A}|+|\mathbf{B}|}{2}\right) \\
    &\leq 2^{p+1} \Phi\left( \frac{|\mathbf{A}|+|\mathbf{B}|}{2} \right) \\
    &\leq 2^p \Big( |\mathbf{A}|^p \log(e+|\mathbf{A}|) + |\mathbf{B}|^p \log(e+|\mathbf{B}|) \Big).
    \end{align*}
    Multiplying these inequalities by $a(x) \ge 0$ and $b(x) \ge 0$, respectively, and adding them, we obtain
    \begin{align*}
    &H(x, \mathbf{A}+\mathbf{B}) = a(x)|\mathbf{A}+\mathbf{B}|^p + b(x)|\mathbf{A}+\mathbf{B}|^p \log(e+|\mathbf{A}+\mathbf{B}|) \\
    &\hspace{14mm} \leq 2^{p-1} a(x) (|\mathbf{A}|^p + |\mathbf{B}|^p) + 2^p b(x) \Big( |\mathbf{A}|^p \log(e+|\mathbf{A}|) + |\mathbf{B}|^p \log(e+|\mathbf{B}|) \Big) \\
    &\hspace{14mm}\leq 2^p \big( H(x, \mathbf{A}) + H(x, \mathbf{B}) \big),
    \end{align*}
    which completes the proof.
    \end{proof}

The second one is an elementary scaling property of $H$.
\begin{lemma}
    Let $0 \leq \lambda \leq 1$. Then
    \begin{equation}\label{lem:H-cutoff}
        H(x,\lambda \mathbf{A}) \leq \lambda^pH(x,\mathbf{A})    
    \end{equation}
    for every $\mathbf{A} \in \mathbb{R}^{N\times n}$.
\end{lemma}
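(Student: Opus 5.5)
We prove \eqref{lem:H-cutoff} by treating the two parts of $H$ separately and using only the monotonicity of the logarithm. Set $t \coloneq |\mathbf{A}| \geq 0$. Since $0 \leq \lambda \leq 1$, we have $|\lambda \mathbf{A}| = \lambda t$. The power term is exactly homogeneous:
$$
a(x)|\lambda\mathbf{A}|^p = \lambda^p a(x) t^p .
$$

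For the logarithmic term, we use that $s \mapsto \log(e+s)$ is nondecreasing on $[0,\infty)$. Because $\lambda t \leq t$, this gives $\log(e+\lambda t) \leq \log(e+t)$, and hence
$$
b(x)(\lambda t)^p \log(e+\lambda t) \leq \lambda^p\, b(x)\, t^p \log(e+t).
$$

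Adding the two relations, with $a(x), b(x) \geq 0$ so that the direction of the inequality is preserved, yields $H(x,\lambda\mathbf{A}) \leq \lambda^p H(x,\mathbf{A})$. The case $\lambda = 0$ or $\mathbf{A} = 0$ is trivial, since both sides vanish.

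There is no real obstacle here. The only point to observe is that the restriction $\lambda \leq 1$ is exactly what makes the logarithm monotone in the right direction. For $\lambda > 1$ one would instead need the second inequality in \eqref{cond : log}, which gives the weaker bound $\lambda^{p+1}H(x,\mathbf{A})$.

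This scaling property will be used together with the quasi-convexity \eqref{lem:quasi-convexity} in the Caccioppoli estimate. Specifically, it controls $H(x,(1-\eta)D\bu)$ by $(1-\eta)^p H(x,D\bu)$ for a cut-off function $0 \leq \eta \leq 1$, which is what makes the hole-filling argument work.
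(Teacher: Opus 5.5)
Your proof is correct and takes the same approach as the paper. The power term scales exactly by $\lambda^p$, the logarithmic term is handled by monotonicity of $s\mapsto\log(e+s)$ together with $\lambda|\mathbf{A}|\le|\mathbf{A}|$, and nonnegativity of $a(x)$ and $b(x)$ lets you add the two bounds.
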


\begin{proof}
    Since $0 \leq \lambda \leq 1$,
    $$
    \log(e+\lambda|\mathbf{A}|) \leq \log(e+|\mathbf{A}|),
    $$
    and therefore
    $$
    \begin{aligned}
    H(x,\lambda \mathbf{A}) &= a(x)|\lambda \mathbf{A}|^p + b(x)|\lambda \mathbf{A}|^p \log(e+\lambda|\mathbf{A}|)\\
    &= \lambda^pa(x)|\mathbf{A}|^p + \lambda^pb(x)|\mathbf{A}|^p \log(e+\lambda|\mathbf{A}|)\\
    &\leq \lambda^p \left( a(x)|\mathbf{A}|^p + b(x)|\mathbf{A}|^p \log(e+|\mathbf{A}|) \right)\\
    &= \lambda^pH(x,\mathbf{A}).
    \end{aligned}
    $$
\end{proof}

Next, we prove a Caccioppoli-type inequality.
\begin{lemma}
    Let $\bu \in W^{1,p}(\Omega; \mr^{N})$ be a local minimizer of the functional $\mathcal{F}$ defined in \eqref{def : functional 4}, and let $B_R \subset \Omega$ be a ball. Then, for every $\mathbf{k} \in \mr^N$,
    \begin{equation}\label{ineq : Caccioppoli}
        \dashint_{B_{R/2}} H(x, D\bu)\, dx \leq c \, \dashint_{B_R} H \left(x, \dfrac{\bu-\mathbf{k}}{R}\right)\, dx
    \end{equation}
    for a constant $c$ depending only on $n$, $N$, $p$, $\nu$, and $L$.
\end{lemma}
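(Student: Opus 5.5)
We follow the standard hole-filling route: test minimality with a cut-off competitor, absorb a fraction of the energy into the left-hand side, and conclude with Lemma \ref{lem : iteration lemma}. Fix $\mathbf{k}\in\mr^N$ and radii $R/2\le r<s\le R$, and take $\eta\in C_0^\infty(B_s)$ with $0\le\eta\le1$, $\eta\equiv1$ on $B_r$ and $|D\eta|\le 2/(s-r)$. The competitor is
$$
\bw\coloneq \bu-\eta(\bu-\mathbf{k}).
$$
Then $\operatorname{supp}(\bu-\bw)\Subset B_s\subset\Omega$. Its gradient is
$$
D\bw=(1-\eta)D\bu-(\bu-\mathbf{k})\otimes D\eta.
$$

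Using \eqref{cond : H and F} and local minimality, we get
$$
\nu\int_{B_s}H(x,D\bu)\,dx\le\int_{B_s}F(x,\bu,D\bu)\,dx\le\int_{B_s}F(x,\bw,D\bw)\,dx\le L\int_{B_s}H(x,D\bw)\,dx.
$$
We then estimate $H(x,D\bw)$ with the quasi-convexity \eqref{lem:quasi-convexity} and the scaling property \eqref{lem:H-cutoff}. Applied with $\lambda=1-\eta\in[0,1]$, they give $H(x,(1-\eta)D\bu)\le(1-\eta)^pH(x,D\bu)$, which vanishes on $B_r$. For the second term, write $|D\eta|\,|\bu-\mathbf{k}|\le \frac{2R}{s-r}\cdot\frac{|\bu-\mathbf{k}|}{R}$, where $A\coloneq \frac{2R}{s-r}\ge1$. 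We need the growth estimate
$$
H(x,A\mathbf{z})\le A^{p+1}H(x,\mathbf{z})\qquad (A\ge1),
$$
which follows from the second line of \eqref{cond : log}: $\log(e+At)\le A\log(e+t)$. Combining these, we arrive at
$$
\int_{B_r}H(x,D\bu)\,dx\le \frac{L C_p}{\nu}\int_{B_s\setminus B_r}H(x,D\bu)\,dx+\frac{LC_p}{\nu}\Big(\frac{2R}{s-r}\Big)^{p+1}\int_{B_R}H\Big(x,\frac{\bu-\mathbf{k}}{R}\Big)dx.
$$

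Next we fill the hole: add $\frac{LC_p}{\nu}\int_{B_r}H(x,D\bu)\,dx$ to both sides and divide by $1+\frac{LC_p}{\nu}$. This yields
$$
h(r)\le\theta h(s)+\frac{A_0}{(s-r)^{p+1}},\qquad \theta=\frac{LC_p/\nu}{1+LC_p/\nu}<1,
$$
where $h(t)=\int_{B_t}H(x,D\bu)\,dx$ is bounded because $H(\cdot,D\bu)\in L^1$, and $A_0=cR^{p+1}\int_{B_R}H\big(x,\frac{\bu-\mathbf{k}}{R}\big)\,dx$. Lemma \ref{lem : iteration lemma} with $r_1=R/2$, $r_2=R$ and $\kappa=p+1$ then gives $h(R/2)\le c\int_{B_R}H\big(x,\frac{\bu-\mathbf{k}}{R}\big)\,dx$. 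Dividing by $|B_{R/2}|$ gives \eqref{ineq : Caccioppoli} with $c=c(n,N,p,\nu,L)$.

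The main obstacle is that $H$ is not homogeneous, so the factor $\frac{R}{s-r}$ cannot be pulled out exactly as a power $p$. This is handled by the bound $A^{p+1}$ above. The extra power is harmless because it only changes the exponent $\kappa$ in the iteration lemma, and $\kappa$ is the only place where it enters the constant, so the constant does not depend on $a$ or $b$.
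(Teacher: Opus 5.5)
Your proposal is correct and follows the paper's proof essentially step by step. It uses the same cut-off competitor $\bw=\bu-\eta(\bu-\mathbf{k})$ on $R/2\le r<s\le R$, quasi-convexity together with the scaling bound for $\lambda=1-\eta$, and the growth estimate $H(x,\lambda t)\le\lambda^{p+1}H(x,t)$ for $\lambda\ge1$. It then closes with hole-filling and the iteration lemma with $\kappa=p+1$, exactly as the paper does.
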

\begin{proof}
    Fix a constant vector $\mathbf{k}=(k^1,\cdots,k^N)\in\mathbb{R}^N$ and radii $\frac{R}{2}\le r<s\le R$. We choose a cut-off function $\eta\in C_0^\infty(B_s)$ such that $0\le\eta\le1$, $\eta\equiv1$ on $B_r$, and $|D\eta|\le\frac{2}{s-r}$, and we define the comparison function $\mathbf{w}=(w^1, \cdots, w^N)$ by
    $$w^i(x) \coloneq u^i(x) - \eta(x)(u^i(x) - k^i) = (1 - \eta(x))u^i(x) + \eta(x)k^i, \quad i=1, \cdots, N.$$
    Since $\eta = 0$ on $\partial B_s$, we have $\mathbf{w} - \bu \in W_0^{1,p}(B_s; \mathbb{R}^N)$.
    Differentiating componentwise, we obtain
    $$D_\alpha w^i = (1-\eta)D_\alpha u^i - (u^i-k^i)D_\alpha\eta, \quad \alpha=1,\cdots,n,\; i = 1, \cdots, N.$$
    By the local minimality of $\bu$ and condition \eqref{cond : H and F},
    \begin{equation}\label{4.11}
    \begin{aligned}
        \nu \int_{B_s} H(x, D\bu) \, dx &\le \int_{B_s} F(x, \bu, D\bu) \, dx \\
        &\le \int_{B_s} F(x, \mathbf{w}, D\mathbf{w}) \, dx \le L \int_{B_s} H(x, D\mathbf{w}) \, dx.
    \end{aligned}
    \end{equation}
    By \eqref{lem:quasi-convexity} and \eqref{lem:H-cutoff} with $\lambda=1-\eta$, we have
    $$
    H(x,D\mathbf{w}) \leq C_p (1-\eta)^p H(x,D\bu) + C_p \, H\!\left( x, (\bu-\mathbf{k}) \otimes D\eta \right).
    $$
    For $\lambda \geq 1$ and $t \geq 0$, \eqref{cond : log} gives $\log(e+\lambda t) \leq \lambda \log(e+t)$, and hence
    $$
    H(x,\lambda t) \leq \lambda^{p+1} H(x,t).
    $$
    Since $|(\bu-\mathbf{k}) \otimes D\eta| \leq \lambda \frac{|\bu-\mathbf{k}|}{R}$ with $\lambda \coloneq \frac{2R}{s-r} \geq 1$, it follows that
    $$
    H\!\left(x,(\bu-\mathbf{k}) \otimes D\eta\right) \leq \left(\frac{2R}{s-r}\right)^{p+1} H\!\left(x,\frac{\bu-\mathbf{k}}{R}\right).
    $$
    Substituting these estimates into \eqref{4.11} and using that $(1-\eta)^p=0$ in $B_r$, we obtain
    $$
    \int_{B_r}H(x,D\bu)\,dx \leq c_* \int_{B_s\setminus B_r}H(x,D\bu)\,dx + c \left(\frac{R}{s-r}\right)^{p+1} \int_{B_R} H\!\left(x,\frac{\bu-\mathbf{k}}{R}\right)\,dx,
    $$
    where $c_*=c_*(p,\nu,L)$ and $c=c(p,\nu,L)$.
    By filling the hole, that is, adding
    $$ c_* \int_{B_r}H(x,D\bu)\,dx $$
    to both sides, we obtain
    $$
    \int_{B_r}H(x,D\bu)\,dx \leq \theta \int_{B_s}H(x,D\bu)\,dx + \frac{c\,R^{p+1}}{(s-r)^{p+1}} \int_{B_R} H\!\left(x,\frac{\bu-\mathbf{k}}{R}\right)\,dx,
    $$
    where $ \theta=\frac{c_*}{c_*+1}\in(0,1)$.
    We set
    $$
    h(t)=\int_{B_t}H(x,D\bu)\,dx, \qquad \frac{R}{2} \leq t \leq R.
    $$
    Since $h$ is bounded, Lemma \ref{lem : iteration lemma} with $\kappa=p+1$ yields
    $$
    \int_{B_{R/2}}H(x,D\bu)\,dx \leq c \int_{B_R} H\!\left(x,\frac{\bu-\mathbf{k}}{R}\right)\,dx,
    $$
    where $c=c(p,\nu,L)$.
    Finally, dividing both sides by $|B_{R/2}|$ and using
    $|B_R|=2^n|B_{R/2}|$, we obtain \eqref{ineq : Caccioppoli}.
\end{proof}

We also use the Poincar\'e inequality for vector-valued functions.
\begin{lemma}\label{poincare vector}
Let $B_R\subset\mathbb{R}^n$ be a ball and let $\bu \in W^{1,p}(B_R;\mr^N)$.
Then
\begin{equation}\label{vector poincare}
    \left(\, \dashint_{B_R} \left|\bu-(\bu)_{B_R}\right|^p\,dx \right)^{\frac{1}{p}}
    \leq
    cR \left(\, \dashint_{B_R}|D\bu|^p\,dx \right)^{\frac{1}{p}},
\end{equation}
where $c=c(n,N,p)$.
\end{lemma}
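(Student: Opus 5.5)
The plan is to reduce the vector-valued inequality \eqref{vector poincare} to the classical scalar Poincar\'e inequality on balls, applied componentwise, and then reassemble the components. Write $\bu=(u^1,\dots,u^N)$ with $u^i\in W^{1,p}(B_R)$. Since $(\bu)_{B_R}=((u^1)_{B_R},\dots,(u^N)_{B_R})$, the scalar Poincar\'e inequality (see \cite{Giusti2003}) gives, for each $i$,
$$
\dashint_{B_R}|u^i-(u^i)_{B_R}|^p\,dx\le c(n,p)\,R^p\dashint_{B_R}|Du^i|^p\,dx .
$$
The scaling $R^p$ comes from applying the inequality on the unit ball to $x\mapsto u^i(x_0+Rx)$, so the constant is independent of $R$ and of the center.

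Next compare the Euclidean norms with the sums of components. For $\mathbf z\in\mr^N$ we have
$$
|\mathbf z|^p=\Bigl(\sum_i |z^i|^2\Bigr)^{p/2}\le N^{\max\{p/2,1\}}\sum_i|z^i|^p ,
$$
using $\ell^2$–$\ell^p$ norm equivalence in finite dimensions. On the other hand, $|Du^i|\le|D\bu|$ pointwise, because $|D\bu|^2=\sum_{i,\alpha}|D_\alpha u^i|^2$. Summing the scalar estimates over $i$ therefore yields
$$
\dashint_{B_R}|\bu-(\bu)_{B_R}|^p\,dx\le c(N,p)\sum_{i=1}^N\dashint_{B_R}|u^i-(u^i)_{B_R}|^p\,dx\le c(n,N,p)\,R^p\,N\dashint_{B_R}|D\bu|^p\,dx .
$$
Taking $p$-th roots gives \eqref{vector poincare} with $c=c(n,N,p)$.

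There is no genuine obstacle here. The only point requiring care is bookkeeping the dimensional constants in the norm comparison, which differs for $p<2$ and $p\ge2$; this is handled uniformly by the factor $N^{\max\{p/2,1\}}$. Alternatively, one could prove the estimate directly via the representation $|\bu(x)-(\bu)_{B_R}|\le c\int_{B_R}|D\bu(y)||x-y|^{1-n}\,dy$ and Young's convolution inequality. The componentwise route is shorter, so I would use it.
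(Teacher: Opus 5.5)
Your proposal is correct and follows essentially the same route as the paper: apply the scalar Poincar\'e inequality to each component and reassemble using equivalence of norms on $\mathbb{R}^N$. Your explicit constant $N^{\max\{p/2,1\}}$ and the pointwise bound $|Du^i|\le|D\bu|$ simply spell out the norm-equivalence step that the paper only states.
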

\begin{proof}
    Write $\bu=(u_1,\cdots,u_N)$. Applying the scalar-valued Poincar\'e inequality to each component $u_\alpha$, $\alpha=1,\cdots,N$, we obtain
    $$
    \sum_{\alpha=1}^N\int_{B_R}|u_\alpha-(u_\alpha)_{B_R}|^p\,dx
    \leq cR^p \, \sum_{\alpha=1}^N\int_{B_R}|Du_\alpha|^p\,dx.
    $$
    Since   
    $$
    |\bu-(\bu)_{B_R}|^p = \Big(\sum_{\alpha=1}^N |u_\alpha-(u_\alpha)_{B_R}|^2 \Big)^{\frac {p}{2}},   
    $$
    and
    $$
    |D\bu|^p = \Big( \sum_{\alpha=1}^N |Du_\alpha|^2 \Big)^{\frac {p}{2}},
    $$
    the equivalence of norms in the finite-dimensional space $\mathbb{R}^N$ yields
    $$
    \int_{B_R} |\bu-(\bu)_{B_R}|^p \, dx \leq cR^p \int_{B_R} |D\bu|^p \, dx.
    $$
    Dividing by $|B_R|$ and taking the $p$-th root gives
    $$
    \left(\, \dashint_{B_R} |\bu-(\bu)_{B_R}|^p \, dx \right)^{\frac{1}{p}} \leq cR \, \left(\,\dashint_{B_R}|D\bu|^p \, dx \right)^{\frac{1}{p}}.
    $$
\end{proof}
\medskip
\begin{proof}[Proof of Theorem \ref{thm : Gehring's theory}]
    By Lemma \ref{Lem : Gehring lemma}, it suffices to show that there exist an exponent $d \in (0, 1)$, depending only on $n$, $N$, and $p$, and a constant $c$, depending only on $n$, $N$, $p$, $\nu$, $L$, $\mu$, $\|a\|_{L^\infty(\Omega)}$, $\|D\bu\|_{L^p}$, and $\tilde{L}$, such that the reverse H\"older inequality
    \begin{equation}\label{ineq : rev}
        \dashint_{B_{R/2}} H(x, D\bu) \, dx \le c \left(\, \dashint_{B_R} [H(x, D\bu)]^d \, dx \right)^{\frac{1}{d}}
    \end{equation}
    holds for every ball $B_R \subset \Omega$ with $R \le R_*$. Indeed, Lemma \ref{Lem : Gehring lemma} applied with $f = H(x, D\bu)$, $\tilde{c} = c$, and $R_0 = R_*$ then gives an exponent $\delta_g > 0$ such that $H(x, D\bu) \in L_{\operatorname{loc}}^{1+\delta_g}(\Omega)$ and
    $$
    \dashint_{B_{R/2}} [H(x, D\bu)]^{1+\delta_g} \, dx \le C \left(\, \dashint_{B_R} H(x, D\bu) \, dx \right)^{1+\delta_g}
    $$
    for every ball $B_R \subset \Omega$ with $R \le R_*$, which is \eqref{ineq : reverse holder}. The last assertion of the theorem then follows from Morrey's embedding theorem, since $\mu|D\bu|^p \le H(x,D\bu)$ by \eqref{cond : a,b}.

    To prove \eqref{ineq : rev}, we fix a ball $B_R \subset \Omega$ with $R \leq R_*$ and set
    $$
    b_i(R) \coloneq \inf_{B_R}b \quad \text{and} \quad a_i(R) \coloneq \inf_{B_R} a.
    $$
    We distinguish two cases according to whether
    \begin{equation}\label{cond : b_i}
        b_i(R)\leq 2\omega_b(R)    
    \end{equation}
    or
    \begin{equation}\label{cond : b_i 2}
        b_i(R)>2\omega_b(R).
    \end{equation}
    
    We first consider the case \eqref{cond : b_i}. Then, for every $x \in B_R$,
    $$
    b(x) = [b(x) - b_i(R)] + b_i(R) \leq \omega_b(2R) + b_i(R) \leq 2 \omega_b (R) + b_i(R) \leq 4 \omega_b(R)
    $$ 
    where we have used the concavity of $\omega_b(\cdot)$ to get $\omega_b(2R )\le 2\omega_b(R)$.
    By the vector-valued Poincar\'e inequality \eqref{vector poincare}, we have
    $$
    \left( \,\dashint_{B_R} |\bu - (\bu)_{B_R}|^p \, dx \right)^{\frac{1}{p}} \le c R \left(\, \dashint_{B_R} |D\bu|^p \, dx \right)^{\frac{1}{p}},
    $$
    which, after taking the $p$-th power and using $|B_R| = \alpha_n R^n$, yields the scale-invariant estimate
    $$\frac{(|\bu - (\bu)_{B_R}|^p)_{B_R}}{R^p} \le \frac{c^p}{\alpha_n R^n} \int_{B_R} |D\bu|^p \, dx = \frac{c_1}{R^n} \|D\bu\|_{L^p(B_R)}^p.$$
    Using the elementary inequality $\log(e + t) \le c [1 + \log(e + t^p)]$ with $t = \frac{|\bu - (\bu)_{B_R}|}{R}$ together with \eqref{cond : log}, we deduce that
    \begin{equation}\label{ineq : 4.13_1}
        \begin{aligned}
        &\log \left( e + \left| \frac{\bu - (\bu)_{B_R}}{R} \right| \right) \le c \left[ 1 + \log \left( e + \left| \frac{\bu - (\bu)_{B_R}}{R} \right|^p \right) \right] \\ 
        &\hspace{3mm}\le c + c \log \left( e + \frac{|\bu - (\bu)_{B_R}|^p}{(|\bu - (\bu)_{B_R}|^p)_{B_R}} \right) + c \log \left( e + \frac{(|\bu - (\bu)_{B_R}|^p)_{B_R}}{R^p} \right) \\ 
        &\hspace{3mm}\le c + c \log \left( e + \frac{|\bu - (\bu)_{B_R}|^p}{(|\bu - (\bu)_{B_R}|^p)_{B_R}} \right) + c \log \left( e + \frac{\|D\bu\|_{L^p(B_R)}^p}{R^n} \right) \\ 
        &\hspace{3mm}\le c \log \left( e + \frac{|\bu - (\bu)_{B_R}|^p}{(|\bu - (\bu)_{B_R}|^p)_{B_R}} \right) + c \log \left( \frac{1}{R} \right), 
    \end{aligned}    
    \end{equation}
    where $c \equiv c(n, N, p, \|D\bu\|_{L^p})$ and, in the last step, the constant terms are absorbed by using $\log\left(\frac{1}{R}\right) \ge 1$, which holds since $R \le \frac{1}{e}$.
    Under \eqref{cond : b_i}, we distinguish the two sub-cases $a_i(R) \le 2\omega_a(R)$ and $a_i(R) > 2\omega_a(R)$.

    Suppose first that $a_i(R)\le 2\omega_a(R)$. Then, for every $x\in B_R$,
    $$
    a(x)\le a_i(R)+\omega_a(2R) \le 4\omega_a(R).
    $$
    Since also $b(x) \le 4\omega_b(R)$, we obtain
    $$
    a(x)+b(x) \le 4\left(\omega_a(R)+\omega_b(R)\right) < \mu,
    $$
    where the last inequality follows from \eqref{def : R_*} and $R \le R_*$.
    This contradicts \eqref{cond : a,b}, so this sub-case cannot occur.
    Hence $a_i(R) > 2\omega_a(R)$, and therefore
    $$ a(x) = [a(x) - a_i(R)] + a_i(R) \le \omega_a(2R) + a_i(R) \le 2\omega_a(R) + a_i(R) \le 2a_i(R)$$
    for all $x \in B_R$. Using this, $b(x) \le 4\omega_b(R)$, \eqref{ineq : Caccioppoli} with $\mathbf{k} = (\bu)_{B_R}$, and \eqref{ineq : 4.13_1}, we get
    $$
    \begin{aligned}
        \dashint_{B_{R/2}} H(x,D\bu)\, dx &\le c \,\dashint_{B_R} H\left(x, \frac{\bu - (\bu)_{B_R}}{R}\right) dx \\
        &\le c \, \dashint_{B_R} a_i(R)\left| \frac{\bu - (\bu)_{B_R}}{R} \right|^p dx \\
        &\quad + {c\,\omega_b(R)} \dashint_{B_R} \left| \frac{\bu - (\bu)_{B_R}}{R} \right|^p \log \left( e + \left| \frac{\bu - (\bu)_{B_R}}{R} \right| \right) dx \\
        &\le c \left( a_i(R) + {\omega_b(R)} \log\left(\frac{1}{R}\right) \right) \dashint_{B_R} \left| \frac{\bu - (\bu)_{B_R}}{R} \right|^p dx \\
        &\quad + c\,\omega_b(R) \dashint_{B_R} \left| \frac{\bu - (\bu)_{B_R}}{R} \right|^p \log \left( e + \frac{|\bu - (\bu)_{B_R}|^p}{(|\bu - (\bu)_{B_R}|^p)_{B_R}} \right) dx.
    \end{aligned}
    $$

    Applying \eqref{L log L} with $f \equiv \frac{|\bu -(\bu)_{B_R}|^p}{R^{p}}$, $\gamma = 1$, and the exponent $\frac{q}{p}$ for some $q > p$ to the last term, we obtain
    $$
    \begin{aligned}
        &\dashint_{B_{R/2}} H(x,D\bu)\, dx\\
        & \le c \left( a_i(R) + \omega_b(R) \log\left(\frac{1}{R}\right) \right)\\
        &\qquad \times \left[ \dashint_{B_R} \left| \frac{\bu - (\bu)_{B_R}}{R} \right|^p dx + \left( \dashint_{B_R} \left| \frac{\bu - (\bu)_{B_R}}{R} \right|^q dx \right)^{\frac{p}{q}} \right]
    \end{aligned}
    $$
    where $c$ depends only on $n$, $N$, $p$, $q$, $\nu$, $L$, and $\|D\bu\|_{L^p}$. By the Sobolev--Poincar\'e inequality, we can choose $q > p$ and $q_{*} < p$, depending only on $n$ and $p$, such that
    $$
    \left(\, \dashint_{B_R} \left| \frac{\bu - (\bu)_{B_R}}{R} \right|^q dx \right)^{\frac{p}{q}} \le c \left(\, \dashint_{B_R} |D\bu|^{q_*} dx \right)^{\frac{p}{q_*}}
    $$
    and, by H\"older's inequality,
    $$
    \dashint_{B_R} \left| \frac{\bu - (\bu)_{B_R}}{R} \right|^p dx \le c \left( \,\dashint_{B_R} |D\bu|^{q_*} dx \right)^{\frac{p}{q_*}}.
    $$
    Since $a$ is bounded and \eqref{cond : Logarithmic 2} holds, we have
    $$
    c\left(a_i(R) + \omega_b(R)\log\left(\frac{1}{R}\right)\right) \le C.
    $$
    Combining these estimates with the previous inequality, we obtain
    $$
    \dashint_{B_{R/2}} H(x,D\bu)\, dx \le c \left(\, \dashint_{B_R} |D\bu|^{q_*} dx \right)^{\frac{p}{q_*}}.
    $$
    Since \eqref{cond : a,b} implies $\mu|D\bu|^p \le H(x,D\bu)$, we conclude the reverse H\"older inequality
    $$
    \dashint_{B_{R/2}} H(x,D\bu)\, dx \le c \left(\, \dashint_{B_R} [H(x,D\bu)]^{\frac{q_*}{p}} dx \right)^{\frac{p}{q_*}}
    $$
    for some $q_* < p$ and some constant $c$ depending only on $n$, $N$, $p$, $\nu$, $L$, $\mu$, $\|a\|_{L^\infty(\Omega)}$, $\|D\bu\|_{L^p}$, and $\tilde{L}$.

    We now consider the case \eqref{cond : b_i 2}. For every $x \in B_R$, we have
    $$
    b(x) = [b(x)-b_i(R)] + b_i(R) \le \omega_b(2R) + b_i(R) \le 2 \omega_b(R) + b_i(R) \le 2 b_i(R).
    $$
    Moreover, $a(x) \le 2a_i(R)$ if $a_i(R) > 2\omega_a(R)$, and $a(x) \le 4\omega_a(R)$ if $a_i(R) \leq 2\omega_a(R)$. In both cases, we have 
    $$
    a(x) \le \max \{2a_i(R), 4 \omega_a(R)\} \eqcolon a_m(R).
    $$
    Therefore, \eqref{ineq : Caccioppoli} with $\mathbf{k} = (\bu)_{B_R}$ gives
    \begin{align*}
        &\dashint_{B_{R/2}} H(x, D\bu) \, dx \le c\, \dashint_{B_R} H\left(x, \frac{\bu - (\bu)_{B_R}}{R}\right) dx\\
        &\hspace{1.5cm}\le c\, \dashint_{B_R}\left| \dfrac{\bu-(\bu)_{B_R}}{R} \right|^p \left( a_m(R) + b_i(R) \log\left(  e + \left| \dfrac{\bu-(\bu)_{B_R}}{R} \right| \right) \right) dx. 
    \end{align*}

    We now apply the Sobolev--Poincar\'e inequality \eqref{ienq : Sobolev} to the function $\varphi$ in \eqref{rem : 2.1}, which yields
    \begin{align*}
        &\dashint_{B_R} \left| \frac{\bu - (\bu)_{B_R}}{R} \right|^p \left( a_0 + b_0 \log\left( e + \left| \frac{\bu - (\bu)_{B_R}}{R} \right| \right) \right) dx \\
        &\hspace{4cm}\le c \left( \,\dashint_{B_R} |D\bu|^{p d_1} \left( a_0 + b_0 \log(e + |D\bu|) \right)^{d_1} dx \right)^{\frac{1}{d_1}},
    \end{align*}
    with $d_1 \equiv d_1(n,N,p) \in (0,1)$ and $c$ depending only on $n$, $N$, and $p$. Applying this inequality with $a_0 = a_m(R)$ and $b_0 = b_i(R)$, we deduce that
    $$
    \begin{aligned} 
        &\dashint_{B_{R/2}} H(x, D\bu) \, dx \\
        &\hspace{1.3cm}\le c\, \dashint_{B_R} \left| \frac{\bu - (\bu)_{B_R}}{R} \right|^p \left( a_m(R) + b_i(R) \log \left( e + \left| \frac{\bu - (\bu)_{B_R}}{R} \right| \right) \right) dx \\ 
        &\hspace{1.3cm}\le c \left(\, \dashint_{B_R} |D\bu|^{p d_1} \left( a_m(R) + b_i(R) \log(e + |D\bu|) \right)^{d_1} dx \right)^{\frac{1}{d_1}}.
    \end{aligned}
    $$
    It remains to estimate the integrand on the right-hand side. Since $a(\cdot)$ is bounded, there exists a constant $C>0$, independent of $R$, such that $a_m(R)\leq C$. We distinguish two cases according to the size of $b_i(R)$.
    If $b_i(R)\geq\frac{\mu}{4}$, then
    $$
    a_m(R) \leq C \leq \frac{4C}{\mu}b_i(R).
    $$
    Since $\log(e+|D\bu|)\geq 1$, it follows that
    $$
    \begin{aligned}
    a_m(R)+b_i(R)\log(e+|D\bu|) &\leq \frac{4C}{\mu}b_i(R) +b_i(R)\log(e+|D\bu|)\\
    &\leq \left(\frac{4C}{\mu}+1\right) b_i(R)\log(e+|D\bu|).
    \end{aligned}  
    $$
    Since $b_i(R)\leq b(x)$ for every $x\in B_R$, it follows that
    $$
    b_i(R)\log(e+|D\bu|) \leq b(x)\log(e+|D\bu|).
    $$
    Therefore,
    $$
    \begin{aligned}
    &|D\bu|^{pd_1} \left(a_m(R)+b_i(R)\log(e+|D\bu|)\right)^{d_1}\\
    &\qquad\leq c \left( b(x)|D\bu|^p \log(e+|D\bu|) \right)^{d_1}\\
    &\qquad\leq c[H(x,D\bu)]^{d_1}.
    \end{aligned}
    $$  
    Consequently,
    $$
    \dashint_{B_{R/2}} H(x,D\bu)\,dx \leq c\left(\, \dashint_{B_R}[H(x,D\bu)]^{d_1}\,dx \right)^{\frac{1}{d_1}}.
    $$

    If $b_i(R)<\frac{\mu}{4}$, then, since $b_i(R)>2\omega_b(R)$, we have for every $x\in B_R$
    $$
    b(x) \leq b_i(R)+\omega_b(2R) \leq b_i(R)+2\omega_b(R) < 2b_i(R) < \frac{\mu}{2}.
    $$
    Hence \eqref{cond : a,b} gives $a(x) \geq \mu-b(x) > \frac{\mu}{2}$ in $B_R$. Consequently,
    $$
    H(x,D\bu) = |D\bu|^p \left( a(x)+b(x)\log(e+|D\bu|) \right) \geq \frac{\mu}{2}|D\bu|^p,
    $$
    and therefore
    $$
    |D\bu|^p \leq \frac{2}{\mu}H(x,D\bu).
    $$
    Since $a_m(R)\leq C$, we have
    $$
    a_m(R)|D\bu|^p \leq \frac{2C}{\mu}H(x,D\bu).
    $$
    On the other hand, since $b_i(R)\leq b(x)$,
    $$
    b_i(R)|D\bu|^p\log(e+|D\bu|) \leq b(x)|D\bu|^p\log(e+|D\bu|) \leq H(x,D\bu).
    $$
    It follows that
    $$
    \begin{aligned}
    &|D\bu|^{pd_1} \left( a_m(R)+b_i(R)\log(e+|D\bu|) \right)^{d_1}\\
    &\qquad = \left[\, |D\bu|^p \left( a_m(R)+b_i(R)\log(e+|D\bu|) \right) \,\right]^{d_1}\\
    &\qquad\leq c[H(x,D\bu)]^{d_1}.
    \end{aligned}
    $$
    Consequently,
    $$
    \dashint_{B_{R/2}} H(x,D\bu)\,dx \leq c\left( \, \dashint_{B_R}[ H(x,D\bu)]^{d_1}\,dx \right)^{\frac{1}{d_1}}.
    $$
    Combining all cases and using H\"older's inequality, we obtain \eqref{ineq : rev} with $d \coloneq \max\{d_1, \frac{q_*}{p}\} \in (0,1)$ and a constant $c$ depending only on $n$, $N$, $p$, $\nu$, $L$, $\mu$, $\|a\|_{L^\infty(\Omega)}$, $\|D\bu\|_{L^p}$, and $\tilde{L}$. This completes the proof.
\end{proof}

\section{\bf Proof of the main theorem}\label{section 5}
In this section, we prove the two assertions of Theorem \ref{thm : main theorem}.
We first show that $\ell = 0$ in \eqref{cond : ell} implies $\bu \in C_{\operatorname{loc}}^{0,\beta}(\Omega;\mr^N)$ for every $\beta \in (0,1)$. We then prove the H\"older continuity of the gradient $D\bu$, assuming that $a(\cdot)$ and $b(\cdot)$ are H\"older continuous. We apply the results of Section \ref{section 4} with $F(x,\bv,\mathbf{P}) = H(x,\mathbf{P})$, so that $\nu=L=1$. Throughout the remainder of this section, $\tilde{L}$ denotes a finite constant satisfying \eqref{cond : Logarithmic 2}, and $R_* \in (0,\frac{1}{e}]$ is the radius determined by \eqref{def : R_*}. We denote by $\texttt{data}$ the collection of the quantities $n$, $N$, $p$, $\mu$, $\|a\|_{L^\infty(\Omega)}$, $\|b\|_{L^\infty(\Omega)}$, $\|H(\cdot,D\bu)\|_{L^1(\Omega)}$, and $\tilde{L}$. Since $\mu|D\bu|^p \leq H(x,D\bu)$ by \eqref{cond : a,b}, the quantity $\|D\bu\|_{L^p(\Omega)}$ is controlled by $\texttt{data}$.

In the lemma below, we estimate the difference between the energy $\mathcal{P}_{\log}(\bu, B_{R/2})$ of a minimizer of $\mathcal{P}_{\log}$ on a ball $B_{R/2} \equiv B_{R/2}(x_0) \subset \Omega$ and the corresponding energy with frozen coefficients, namely
\begin{equation}
    \int_{B_{R/2}} \left( a_i(R)|D\bu|^p + b_i(R) |D\bu|^p \log(e+|D\bu|) \right)\, dx,
\end{equation}
where
\begin{equation}\label{inf a, b}
    a_i(R)\equiv a_i(x_0, R) = \inf_{B_R} a(\cdot) \quad \text{ and } \quad b_i(R)\equiv b_i(x_0, R) = \inf_{B_R} b(\cdot).
\end{equation}
If $R \leq R_*$, then the concavity of $\omega_a$ and $\omega_b$ gives
$$
a_i(R) \geq a(x)-\omega_a(2R) \geq a(x)-2\omega_a(R) \quad \text{and} \quad b_i(R) \geq b(x)-2\omega_b(R)
$$
for every $x \in B_R$. Hence \eqref{cond : a,b} and \eqref{def : R_*} imply
\begin{equation}\label{ineq : ai+bi}
    a_i(R)+b_i(R) \geq \mu - 2\left(\omega_a(R)+\omega_b(R)\right) > \frac{\mu}{2} \quad \text{whenever } R \leq R_*.
\end{equation}
Under \eqref{cond : Logarithmic}, and hence \eqref{cond : Logarithmic 2}, the frozen energy turns out to be comparable to the full energy of a minimizer. Moreover, if $\ell = 0$ in \eqref{cond : ell}, the full energy can be viewed as a small perturbation of the frozen one. The proof of the next lemma uses the minimality of $\bu$ only through the reverse H\"older inequality \eqref{ineq : reverse holder}, together with the inequality \eqref{L log L} in $L \log L$ spaces.

\begin{lemma}\label{lem : Energy comparison}
    Let $\bu \in W^{1,p}(\Omega; \mr^N)$ be a local minimizer of $\mc{P}_{\log}$, let $B_R \equiv B_R(x_0) \subset \Omega$ be a ball with radius $R\leq R_*$, and let $b_i(R)$ be as in \eqref{inf a, b}. Moreover, let $\omega_b(\cdot)$ be as in \eqref{cond : modulus of b} and assume \eqref{cond : Logarithmic 2}. Then, for any $\gamma >0$, there exists a constant $c \equiv c(\texttt{data}, \gamma)$ such that
    \begin{equation}
    \begin{aligned}
        &\int_{B_{R/2}} [b(x)-b_i(R)]^{\gamma} |D\bu|^p \log^{\gamma}(e+|D\bu|)\, dx\\
        &\hspace{3cm}\leq c\left[ \omega_b(R)\log\left(\dfrac{1}{R}\right) \right]^{\gamma} \int_{B_R} H(x,D\bu)\, dx.
    \end{aligned}
    \end{equation}
\end{lemma}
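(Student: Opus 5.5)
We bound $[b(x)-b_i(R)]^{\gamma}$ pointwise, split the logarithm into a part that produces $\log(1/R)$ and a part handled by the $L\log L$ estimate \eqref{L log L}, and close with the reverse H\"older inequality \eqref{ineq : reverse holder}.

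\textbf{Step 1: pointwise bound on the coefficient.} For $x\in B_R$ we have $0\le b(x)-b_i(R)\le \omega_b(2R)\le 2\omega_b(R)$ by concavity of $\omega_b$. Hence the left-hand side is bounded by
$$
c(\gamma)\,\omega_b(R)^{\gamma}\int_{B_{R/2}}|D\bu|^p\log^{\gamma}(e+|D\bu|)\,dx .
$$
It remains to show that
$$
\dashint_{B_{R/2}}|D\bu|^p\log^{\gamma}(e+|D\bu|)\,dx\le c\,\log^{\gamma}\Big(\frac1R\Big)\dashint_{B_R}H(x,D\bu)\,dx .
$$
Multiplying by $|B_{R/2}|$ then gives the statement.

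\textbf{Step 2: splitting the logarithm.} Set $f\coloneq|D\bu|^p$ and $\lambda\coloneq(f)_{B_{R/2}}$. By \eqref{cond : log},
$$
\log(e+|D\bu|)\le c\log(e+f)\le c\log\Big(e+\frac{f}{\lambda}\Big)+c\log(e+\lambda).
$$
Since $\mu|D\bu|^p\le H(x,D\bu)$, we get $\lambda\le c\,R^{-n}\|H(\cdot,D\bu)\|_{L^1(\Omega)}/\mu$. Using $R\le R_*\le 1/e$, so that $\log(1/R)\ge1$, this gives $\log(e+\lambda)\le c(\texttt{data})\log(1/R)$. Raising to the power $\gamma$ yields
$$
\dashint_{B_{R/2}} f\log^{\gamma}(e+|D\bu|)\,dx\le c\,\log^{\gamma}\Big(\frac1R\Big)\dashint_{B_{R/2}}f\,dx + c\dashint_{B_{R/2}}f\log^{\gamma}\Big(e+\frac{f}{(f)_{B_{R/2}}}\Big)dx .
$$
The first term is already of the desired form, since $f\le H/\mu$ and $\log(1/R)\ge1$.

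\textbf{Step 3: the $L\log L$ term (main step).} Apply \eqref{L log L} on $B_{R/2}$ with $q=1+\delta_g$ to bound the second term by $c\,(\dashint_{B_{R/2}}f^{1+\delta_g}dx)^{1/(1+\delta_g)}$. By $f\le H(x,D\bu)/\mu$ and the reverse H\"older inequality \eqref{ineq : reverse holder} of Theorem \ref{thm : Gehring's theory} (valid since $R\le R_*$), this is at most $c\dashint_{B_R}H(x,D\bu)\,dx$. Again using $\log(1/R)\ge1$, this is absorbed into the claimed bound.

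\textbf{Constants.} All constants depend only on $\texttt{data}$ and $\gamma$, because $\delta_g$ and the constant in \eqref{ineq : reverse holder} do. The main obstacle is that Step 2 must not rescale $f$ in any way that loses the $\log(1/R)$ factor. The higher integrability exponent $q=1+\delta_g>1$ is exactly what makes \eqref{L log L} applicable in Step 3, so that step is where the minimality of $\bu$ enters.
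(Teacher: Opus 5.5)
Your proposal is correct and follows essentially the same route as the paper, which defers to \cite[Lemma 5.1]{Baroni2015}: bound $b(x)-b_i(R)\le 2\omega_b(R)$, split $\log(e+|D\bu|)$ around the mean $(|D\bu|^p)_{B_{R/2}}\le cR^{-n}$ to produce the factor $\log(1/R)$, and control the remaining $L\log L$ term by \eqref{L log L} with $q=1+\delta_g$ together with the reverse H\"older inequality \eqref{ineq : reverse holder}. The paper carries out this same computation explicitly for $\gamma=1$ in the proof of Lemma \ref{lem : comparison}; note only that $\log(e+|D\bu|)\le c\log(e+|D\bu|^p)$ is an elementary inequality valid for $p>1$ rather than a consequence of \eqref{cond : log}, and that the case $(|D\bu|^p)_{B_{R/2}}=0$ is trivial.
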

\begin{proof}
    The proof is completely analogous to that of \cite[Lemma 5.1]{Baroni2015}, with the obvious modifications for vector-valued functions $\bu \in W^{1,p}(\Omega; \mr^N)$.
\end{proof}

\begin{remark}\label{rem : E-L}
Let $\bu \in W^{1,p}(\Omega;\mr^N)$ be a local minimizer of the functional $\mathcal{P}_{\log}$ defined in \eqref{main functional}. By standard arguments in Orlicz--Sobolev spaces as in \cite[Remark 5.1]{Baroni2015}, $\bu$ satisfies the weak form of the Euler--Lagrange equation
$$
\int_{\Omega} \langle a(x)\partial f(D\bu) + b(x)\partial g(D\bu), D\boldsymbol{\varphi} \rangle dx = 0
$$
for every test function $\boldsymbol{\varphi} \in W_0^{1,p}(\Omega;\mr^N)$ with $D\boldsymbol{\varphi} \in L^p \log L(\Omega;\mathbb{R}^{N\times n})$, where $f(\mathbf{z}) \coloneq |\mathbf{z}|^p$ and $g(\mathbf{z}) \coloneq |\mathbf{z}|^p \log(e+|\mathbf{z}|)$. This follows from the boundedness of $a(\cdot)$ and $b(\cdot)$ together with Young's inequality in Orlicz spaces for $g$ and its conjugate function $g^*$.
\end{remark}

We now compare $\bu$ with the minimizer of the functional with frozen coefficients.
\begin{lemma}\label{lem : comparison}
    Let $\bu\in W^{1,p}(\Omega;\mr^N)$ be a local minimizer of
    $\mathcal{P}_{\log}$ defined in \eqref{main functional}, and let $B_R \equiv B_R(x_0)\Subset\Omega$ be a ball with $R\le R_*$. Let $a_i(R)$ and $b_i(R)$ be as in \eqref{inf a, b}.
    Assume that $\omega_a$ and $\omega_b$ are moduli of continuity of $a$ and $b$, respectively, and that \eqref{cond : Logarithmic 2} holds.
    Let $\bv \in \bu + W_0^{1,p}(B_{R/2};\mr^N)$ be the minimizer of
    \begin{equation}\label{eq:frozen_problem}
        \mathbf{w} \mapsto \int_{B_{R/2}} \left( a_i(R)|D\mathbf{w}|^p + b_i(R)|D\mathbf{w}|^p\log(e+|D\mathbf{w}|) \right)\,dx.
    \end{equation}
    Then
    \begin{equation}\label{eq : comparison_result}
    \begin{aligned}
        &\int_{B_{R/2}} \left( a_i(R)|\bV_p(D\bu)-\bV_p(D\bv)|^2 + b_i(R)|\bV_{\log}(D\bu)-\bV_{\log}(D\bv)|^2 \right) \,dx \\
        &\hspace{3cm}\le c \left( \omega_a(R) + \omega_b(R) \log \left( \frac{1}{R} \right) \right) \int_{B_R} H(x, D\bu)\,dx,
    \end{aligned}
    \end{equation}
    where $c$ depends only on $\texttt{data}$.
\end{lemma}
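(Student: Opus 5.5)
Write $\varphi_i(t)\coloneq a_i(R)t^p+b_i(R)t^p\log(e+t)$, so that $\bv$ minimizes $\int_{B_{R/2}}\varphi_i(|D\bw|)\,dx$ in $\bu+W_0^{1,p}(B_{R/2};\mr^N)$. The plan is the standard monotonicity comparison. The left-hand side of \eqref{eq : comparison_result} is controlled by the difference of frozen energies $\int_{B_{R/2}}[\varphi_i(|D\bu|)-\varphi_i(|D\bv|)]\,dx$. The minimality of $\bu$ for $\mc P_{\log}$ then turns this difference into integrals involving $a(x)-a_i(R)$ and $b(x)-b_i(R)$. These are finally estimated by $\omega_a(R)$ and, through Lemma \ref{lem : Energy comparison}, by $\omega_b(R)\log(1/R)$.

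\textbf{Step 1 (lower bound by $\bV$-differences).} Since $\bv$ is a frozen minimizer, it satisfies the Euler--Lagrange system $\int_{B_{R/2}}\langle \partial\varphi_i(D\bv),D\boldsymbol\psi\rangle\,dx=0$; this is Remark \ref{rem : E-L} with constant coefficients. We test it with $\boldsymbol\psi=\bu-\bv$, which is admissible because $\varphi_i(|D\bu|)\in L^1$ and $\int\varphi_i(|D\bv|)\le\int\varphi_i(|D\bu|)$. Write $\int[\varphi_i(D\bu)-\varphi_i(D\bv)]=\int\int_0^1\langle\partial\varphi_i(D\bv+s(D\bu-D\bv))-\partial\varphi_i(D\bv),D\bu-D\bv\rangle\,ds\,dx$. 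Apply \eqref{eq : monotonicity V} separately to $t^p$ (with coefficient $a_i$) and to $t^p\log(e+t)$ (with coefficient $b_i$). The monotonicity inequality and the $\bV$-difference for the intermediate point $D\bv+s(D\bu-D\bv)$ are comparable to those at $s=1$, by the standard estimate $\int_0^1|\bV(\mathbf{A}+s(\mathbf{B}-\mathbf{A}))-\bV(\mathbf{A})|^2\,ds/s\gtrsim|\bV(\mathbf{B})-\bV(\mathbf{A})|^2$. This gives
$$\int_{B_{R/2}}\Big(a_i|\bV_p(D\bu)-\bV_p(D\bv)|^2+b_i|\bV_{\log}(D\bu)-\bV_{\log}(D\bv)|^2\Big)dx\le c\int_{B_{R/2}}[\varphi_i(|D\bu|)-\varphi_i(|D\bv|)]\,dx,$$
with $c=c(n,N,p)$.

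\textbf{Step 2 (use the minimality of $\bu$).} Since $\bv$ is admissible for $\bu$ on $B_{R/2}$, we have $\int_{B_{R/2}}H(x,D\bu)\le\int_{B_{R/2}}H(x,D\bv)$. Hence
$$\int[\varphi_i(D\bu)-\varphi_i(D\bv)]\le\int[H(x,D\bv)-\varphi_i(D\bv)]-\int[H(x,D\bu)-\varphi_i(D\bu)]\le\int_{B_{R/2}}\Big((a-a_i)|D\bv|^p+(b-b_i)|D\bv|^p\log(e+|D\bv|)\Big)dx.$$
The $a$-term is bounded by $2\omega_a(R)\int|D\bv|^p\le \frac{4}{\mu}\omega_a(R)\int\varphi_i(|D\bv|)$, using \eqref{ineq : ai+bi} and $\varphi_i(t)\ge(a_i+b_i)t^p$. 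Then $\int\varphi_i(|D\bv|)\le\int\varphi_i(|D\bu|)\le\int_{B_R}H(x,D\bu)$.

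\textbf{Step 3 (the $b$-term, the main obstacle).} On $B_{R/2}$ we have $b-b_i\le 2\omega_b(R)$, but $|D\bv|^p\log(e+|D\bv|)$ must be controlled with a loss of only $\log(1/R)$. We cannot use Lemma \ref{lem : Energy comparison} directly, since $\bv$ is not a minimizer of $\mc P_{\log}$. The plan is to repeat its proof for $\bv$. Split $\log(e+|D\bv|)\le\log\big(e+|D\bv|^p/(|D\bv|^p)_{B_{R/2}}\big)+\log\big(e+(|D\bv|^p)_{B_{R/2}}\big)$ using \eqref{cond : log}. The second logarithm is $\le c\log(1/R)$, because $(|D\bv|^p)_{B_{R/2}}\le cR^{-n}\|H(\cdot,D\bu)\|_{L^1}/\mu$. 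The first term is handled with \eqref{L log L} applied to $|D\bv|^p$ with exponent $1+\sigma$, which requires higher integrability of $\varphi_i(|D\bv|)$ on $B_{R/2}$ up to the boundary. This is exactly what Theorem \ref{thm:hi-phi} provides, uniformly in $a_i,b_i$: we have $\dashint_{B_{R/2}}\varphi_i(D\bv)^{1+\sigma}\le c\dashint_{B_{R/2}}\varphi_i(D\bu)^{1+\sigma}\le c\dashint_{B_{R/2}}H(x,D\bu)^{1+\sigma}$. The last quantity is controlled by $(\dashint_{B_R}H(x,D\bu))^{1+\sigma}$ via \eqref{ineq : reverse holder}, with $\sigma=\min\{\sigma_0,\delta_g\}$; if needed we apply that inequality on a slightly enlarged ball and use a covering argument. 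Combined with $|D\bv|^p\le\frac{2}{\mu}\varphi_i(|D\bv|)$, this yields $\omega_b(R)\int_{B_{R/2}}|D\bv|^p\log\big(e+|D\bv|^p/(|D\bv|^p)\big)\le c\,\omega_b(R)\int_{B_R}H(x,D\bu)$. Since $\log(1/R)\ge1$, both pieces are bounded by $c\,\omega_b(R)\log(1/R)\int_{B_R}H(x,D\bu)$. Collecting Steps 1--3 gives \eqref{eq : comparison_result}.
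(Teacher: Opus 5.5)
Your proposal is correct, and it takes a different route from the paper in the central step.

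The paper does not compare energies. It bounds the left-hand side by the monotonicity inequality \eqref{eq : monotonicity V} and removes the $D\bv$-part with the Euler--Lagrange system of $\bv$. It then subtracts the weak Euler--Lagrange system of $\bu$ from Remark~\ref{rem : E-L}, tested with $\bu-\bv$. This produces cross terms $(a-a_i(R))|D\bu|^{p-1}|D\bu-D\bv|$ and $(b-b_i(R))|D\bu|^{p-1}\log(e+|D\bu|)|D\bu-D\bv|$, which are split by Young's inequality and partly reabsorbed into the left-hand side. Absorption only works when the frozen coefficient dominates the oscillation, so the paper needs the case distinctions $a_i(R)\gtrless\omega_a(R)$ and $b_i(R)\gtrless\omega_b(R)$. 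It is also left with error terms containing both $\int_{B_{R/2}}|D\bu|^p\log(e+|D\bu|)\,dx$ and the analogous integral for $D\bv$.

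You instead use the convexity gap: given the Euler--Lagrange system of $\bv$, the frozen energy difference dominates the $\bV$-differences. You then use the minimality of $\bu$ against the competitor $\bv$. The error becomes the nonnegative quantity $\int_{B_{R/2}}[H(x,D\bv)-\varphi_i(|D\bv|)]\,dx$, which involves only $D\bv$. As a result you need no absorption, no case analysis, no Euler--Lagrange equation for the non-autonomous functional, and no $L\log L$ estimate for $D\bu$.

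Your route costs a slightly stronger pointwise tool than the bare monotonicity \eqref{eq : monotonicity V}. You need the two-sided equivalence $|\bV_\varphi(\mathbf{Q})-\bV_\varphi(\mathbf{R})|^2\sim\varphi''(|\mathbf{Q}|+|\mathbf{R}|)\,|\mathbf{Q}-\mathbf{R}|^2$ together with doubling of $\varphi''$ for the $\int_0^1\cdots\,ds/s$ argument; both are standard under \eqref{cond : mono map}. The paper's comparison step, by contrast, touches $\bu$ only through its weak equation and higher integrability. The decisive ingredient is the same in both routes: global higher integrability of $D\bv$ on $B_{R/2}$, uniform in $a_i(R),b_i(R)$, from Theorem~\ref{thm:hi-phi}, fed by \eqref{ineq : reverse holder}.

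Two small corrections. The splitting $\log(e+|D\bv|)\le\log(e+|D\bv|^p/A_{\bv})+\log(e+A_{\bv})$ needs a factor $c(p)$, because $\log(e+t^{1/p})>\log(e+t)$ for $t<1$. Also, no enlarged ball or covering is needed, since \eqref{ineq : reverse holder} already has $B_{R/2}$ on the left and $B_R$ on the right for $R\le R_*$.
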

\begin{proof}
    By the minimality of $\bv$, we have
    \begin{equation}
        \begin{aligned}
            &\int_{B_{R/2}} (a_i(R)|D\bv|^p + b_i(R)|D\bv|^p \log (e + |D\bv|)) \, dx \\
            &\hspace{3cm} \leq \int_{B_{R/2}} ( a_i(R) |D\bu|^p + b_i(R) |D\bu|^p \log (e + |D\bu|) ) \, dx.
        \end{aligned}
    \end{equation}
    We first derive an $L^p$ bound for $D\bv$. Set
    $$ \varphi(t) \coloneq a_i(R)t^p+b_i(R)t^p\log(e+t). $$
    Since $\log(e+t) \geq 1$, \eqref{ineq : ai+bi} gives $\varphi(t)\geq\frac{\mu}{2}t^p$. Moreover, $\varphi(|D\bu|) \leq H(x,D\bu)$ in $B_R$ by the definition of $a_i(R)$ and $b_i(R)$. Hence the minimality of $\bv$ yields
    \begin{equation}\label{ineq : Dv L^p}
    \begin{aligned}
        \int_{B_{R/2}}|D\bv|^p \, dx &\leq \frac{2}{\mu}\int_{B_{R/2}} \varphi(|D\bv|)\,dx \\
        &\leq \frac{2}{\mu}\int_{B_{R/2}} \varphi(|D\bu|)\,dx \leq \frac{2}{\mu}\int_{B_{R/2}}H(x,D\bu)\,dx.
    \end{aligned}
    \end{equation}

    By the monotonicity property of the vector fields in \eqref{eq : monotonicity V}, there exists a constant $c \ge 1$ depending only on $p$ such that
    $$
    \begin{aligned}
    &a_i(R)|\bV_p(D\bu) - \bV_p(D\bv)|^2 + b_i(R)|\bV_{\log}(D\bu) - \bV_{\log}(D\bv)|^2 \\
    &\le c \, \left\langle a_i(R)\partial f(D\bu) - a_i(R)\partial f(D\bv) + b_i(R)\partial g(D\bu) - b_i(R)\partial g(D\bv), \, D\bu - D\bv \right\rangle,
    \end{aligned}
    $$
    where $f$ and $g$ are as in Remark \ref{rem : E-L}. Since $\bv$ is the minimizer of the frozen functional \eqref{eq:frozen_problem}, it satisfies the Euler--Lagrange equation
    $$
    \int_{B_{R/2}} \left\langle a_i(R)\partial f(D\bv) + b_i(R)\partial g(D\bv), \, D\boldsymbol{\varphi} \right\rangle dx = 0 \quad \text{for all } \boldsymbol{\varphi} \in W_0^{1,p}(B_{R/2};\mathbb{R}^N).
    $$
    Integrating the previous inequality over $B_{R/2}$ and testing this equation with $\boldsymbol{\varphi} = \bu - \bv$, we obtain
    \begin{equation}\label{main model}
    \begin{aligned} 
        &\int_{B_{R/2}} \left( a_i(R)|\bV_p(D\bu) - \bV_p(D\bv)|^2 + b_i(R)|\bV_{\log}(D\bu) - \bV_{\log}(D\bv)|^2 \right) dx \\ 
        &\hspace{2cm}\le c \int_{B_{R/2}} \left\langle a_i(R)\partial f(D\bu) + b_i(R)\partial g(D\bu), \, D\bu - D\bv \right\rangle dx. 
    \end{aligned}
    \end{equation}
    Next, we test the Euler--Lagrange equation of $\bu$ in Remark \ref{rem : E-L} with the same function $\boldsymbol{\varphi} = \bu - \bv$:
    $$
    \int_{B_{R/2}} \left\langle a(x)\partial f(D\bu) + b(x)\partial g(D\bu), \, D\bu - D\bv \right\rangle dx = 0.
    $$
    Subtracting this identity from the right-hand side of \eqref{main model}, we can express it in terms of the oscillations $a(x) - a_i(R)$ and $b(x) - b_i(R)$:
    \begin{equation}\label{I_1 + I_2}
        \begin{aligned} 
        &\int_{B_{R/2}} \langle a_i(R)\partial f(D\bu) + b_i(R)\partial g(D\bu),\, D\bu - D\bv \rangle dx\\ 
        &\hspace{1.5cm}\le c \int_{B_{R/2}} (a(x) - a_i(R)) |\partial f(D\bu)| |D\bu - D\bv| \, dx \\ 
        &\hspace{1.5cm} + c \int_{B_{R/2}} (b(x) - b_i(R)) |\partial g(D\bu)| |D\bu - D\bv| \, dx \\ 
        &\hspace{1.5cm}\eqcolon I_1 + I_2. 
    \end{aligned}
    \end{equation}
    To estimate $I_1$, we first suppose that $a_i(R) > \omega_a(R)$. Using $0 \le a(x) - a_i(R) \le \omega_a(2R) \le 2\omega_a(R)$ in $B_R$, $|\partial f(D\bu)| = p|D\bu|^{p-1}$, and Young's inequality with $\varepsilon > 0$, we obtain
    $$
    \begin{aligned}
        I_1 &\le c \, \omega_a(R) \int_{B_{R/2}} |D\bu|^{p-1} |D\bu - D\bv|  \, dx \\
        &\le c\, \varepsilon \omega_a(R) \int_{B_{R/2}} |D\bu - D\bv|^p \,dx + c(\varepsilon) \, \omega_a(R) \int_{B_{R/2}}  |D\bu|^p \, dx\\
        &\le c\,\varepsilon \omega_a(R) \int_{B_{R/2}} |D \bu - D\bv|^p \,dx + c(\varepsilon)\, \omega_a(R) \int_{B_R} H(x, D\bu)\, dx.
    \end{aligned}
    $$
    We consider the first term on the right-hand side. If $p \geq 2$, then \eqref{2.10} gives
    $$
    \int_{B_{R/2}} |D \bu - D\bv|^p \,dx \leq c \int_{B_{R/2}} |\bV_p(D\bu) - \bV_p(D\bv)|^2 \, dx.
    $$
    If $1<p<2$, then Young's inequality, \eqref{ineq : |V_p|}, and \eqref{ineq : Dv L^p} give
    $$
    \begin{aligned}
        &\int_{B_{R/2}} |D \bu - D\bv|^p \,dx \\
        &= \int_{B_{R/2}} (|D\bu| + |D\bv|)^{\frac{p(p-2)}{2}} |D \bu - D\bv|^p (|D\bu| + |D\bv|)^{\frac{p(2-p)}{2}} \,dx\\
        &\leq c \int_{B_{R/2}} (|D\bu| + |D\bv|)^{p-2} |D \bu - D\bv|^2 \, dx + c \int_{B_{R/2}}(|D\bu| + |D\bv|)^{p} \, dx\\
        &\leq c \int_{B_{R/2}} |\bV_p(D\bu) - \bV_p(D\bv)|^2 \, dx + c\int_{B_{R}} H(x,D\bu) \, dx.
    \end{aligned}
    $$
    Combining the above estimates and using $\omega_a(R) < a_i(R)$, we obtain in both cases
    $$
    I_1 \leq c\,\varepsilon a_i(R) \int_{B_{R/2}} |\bV_p(D\bu)-\bV_p(D\bv)|^2\,dx + C \omega_a(R) \int_{B_R}H(x,D\bu)\,dx,
    $$
    where $c$ depends only on $n$, $N$, and $p$, and $C$ depends only on $\varepsilon$ and $\texttt{data}$.
    Suppose now that $a_i(R) \le \omega_a(R)$. Then Young's inequality, the triangle inequality, and \eqref{ineq : Dv L^p} give
    $$
    \begin{aligned}
        I_1 &\leq c \, \omega_a(R) \int_{B_{R/2}} |D\bu|^{p-1} |D\bu - D\bv|  \, dx \\
        &\leq c \, \omega_a(R) \int_{B_{R/2}} |D\bu|^{p-1} (|D\bu| + |D\bv|)  \, dx \\
        &= c \, \omega_a(R) \int_{B_{R/2}} |D\bu|^p + |D\bu|^{p-1}\,|D\bv|  \, dx \\
        &\leq c \, \omega_a(R) \int_{B_{R/2}} |D\bu|^p + (|D\bu|^p + |D\bv|^p)  \, dx \\
        &\leq c \, \omega_a(R) \int_{B_{R/2}} (|D\bu|^p + |D\bv|^p)  \, dx \\
        &\leq c \, \omega_a(R) \int_{B_{R}} H(x,D\bu)  \, dx.
    \end{aligned}
    $$
    In both cases, we have
    $$
        I_1 \leq c\,\varepsilon a_i(R) \int_{B_{R/2}} |\bV_p(D\bu)-\bV_p(D\bv)|^2\,dx + C \omega_a(R) \int_{B_R}H(x,D\bu)\,dx.
    $$
    To estimate $I_2$, we recall that $g(s) = s^p \log(e+s)$ and use $0 \le b(x)-b_i(R) \le 2\omega_b(R)$ in $B_R$ to obtain
    $$
    \begin{aligned} 
        I_2 &\le c \, \omega_b(R) \int_{B_{R/2}} |D\bu|^{p-1}\log(e+|D\bu|) \, |D\bu - D\bv| \, dx.
    \end{aligned}
    $$
    By Young's inequality with $\delta>0$, the monotonicity of the logarithm, and \eqref{ineq : 2.5}, we have
    $$
    \begin{aligned}
        &|D\bu|^{p-1}\log(e+|D\bu|) \, |D\bu - D\bv| \\
        &= \left( (|D\bu| + |D\bv|)^{\frac{p-2}{2}} \log^{\frac{1}{2}}(e + |D\bu| + |D\bv| ) |D\bu - D\bv| \right) \\
        &\hspace{2cm} \times\left( \frac{(|D\bu| + |D\bv|)^{\frac{2-p}{2}} |D\bu|^{p-1} \log(e+|D\bu|)} {\log^{\frac{1}{2}}(e + |D\bu| + |D\bv| )} \right)\\
        &\leq \frac{\delta}{2} \left( (|D\bu| + |D\bv|)^{p-2} \log (e + |D\bu| + |D\bv| ) |D\bu - D\bv|^2 \right)\\
        &\hspace{2cm} + \frac{1}{2\delta}\left( \frac{(|D\bu| + |D\bv|)^{2-p} |D\bu|^{2(p-1)} \log^2(e+|D\bu|)} {\log(e + |D\bu| + |D\bv| )} \right)\\
        &\leq \frac{c\, \delta}{2} |\bV_{\log}(D\bu) - \bV_{\log}(D\bv)|^2\\
        &\hspace{2cm} + \frac{c}{2\delta} (|D\bu| + |D\bv|)^{2-p} |D\bu|^{2(p-1)} \log (e+|D\bu|)
    \end{aligned}
    $$
    For the last term, if $1<p<2$, then Young's inequality gives
    $$
    \begin{aligned}
        &(|D\bu| + |D\bv|)^{2-p} |D\bu|^{2(p-1)} \log (e+|D\bu|)\\ 
        &\hspace{2cm}\leq c(|D\bu|^{2-p} + |D\bv|^{2-p}) |D\bu|^{2(p-1)} \log (e+|D\bu|)\\
        &\hspace{2cm}\leq c (|D\bu|^{2-p}|D\bu|^{2(p-1)} + |D\bv|^{2-p}|D\bu|^{2(p-1)}) \log (e+|D\bu|)\\
        &\hspace{2cm}\leq c (|D\bu|^{p} + |D\bv|^p + |D\bu|^p) \log (e+|D\bu|)\\
        &\hspace{2cm}\leq c (|D\bu|^{p} + |D\bv|^p) \log (e+|D\bu|).
    \end{aligned} 
    $$
    If $p \ge 2$, then
    $$
    \begin{aligned}
        (|D\bu| + |D\bv|)^{2-p} |D\bu|^{2(p-1)} \log (e+|D\bu|) &\le |D\bu|^{2-p} |D\bu|^{2(p-1)} \log(e + |D\bu|)\\
        &\le |D\bu|^{p} \log(e + |D\bu|).
    \end{aligned}
    $$
    In both cases, we obtain
    $$
    \begin{aligned}
        &(|D\bu| + |D\bv|)^{2-p} |D\bu|^{2(p-1)} \log (e+|D\bu|)\\ 
        &\hspace{2.5cm}\le c(|D\bu|^{p} + |D\bv|^p) \log (e+|D\bu|)\\
        &\hspace{2.5cm}\leq c\left( |D\bu|^{p} \log (e+|D\bu|) + |D\bv|^p \log (e+|D\bu|) \right)\\
        &\hspace{2.5cm}\le c\left( \, g(|D\bu|) + g(|D\bv|) \right).
    \end{aligned}
    $$
    Hence,
    $$
    \begin{aligned}
    I_2 &\leq c\,\omega_b(R) \bigg[\delta \int_{B_{R/2}}|\bV_{\log}(D\bu) - \bV_{\log}(D\bv)|^2 \,dx\\
    &\qquad + \dfrac{c}{\delta} \int_{B_{R/2}} g(|D\bu|) + g(|D\bv|) \, dx \bigg].
    \end{aligned}
    $$
    If $b_i(R) \ge \omega_b(R)$, we choose
    $$
    \delta\coloneq\frac{\varepsilon b_i(R)}{c\,\omega_b(R)}.
    $$
    Then
    $$
    c\,\delta\,\omega_b(R)=\varepsilon\, b_i(R),
    $$
    and since $\frac{\omega_b(R)}{b_i(R)} \le 1$, we have
    $$
    c\,\frac{\omega_b(R)^2}{\varepsilon b_i(R)} \leq \dfrac{c}{\varepsilon}\omega_b(R).
    $$
    Therefore,
    $$
    \begin{aligned}
    I_2
    &\le c \,\varepsilon  b_i(R)\int_{B_{R/2}}|\bV_{\log}(D\bu)-\bV_{\log}(D\bv)|^2\,dx\\
    &\hspace{1cm}+c(\varepsilon)\omega_b(R)\int_{B_{R/2}} g(|D\bu|) + g(|D\bv|) \, dx .
    \end{aligned}
    $$
    If $b_i(R) < \omega_b(R)$, we choose $\delta = 1$ and obtain
    $$
    I_2 \leq c\,\omega_b(R)\left[ \int_{B_{R/2}}|\bV_{\log}(D\bu) - \bV_{\log}(D\bv)|^2 \,dx + \int_{B_{R/2}} g(|D\bu|) + g(|D\bv|) \, dx  \right]. 
    $$ 
    By \eqref{ineq : comparable},
    $$
    |\bV_{\log}(D\bu)-\bV_{\log}(D\bv)|^2 \le c\left(|\bV_{\log}(D\bu)|^2+|\bV_{\log}(D\bv)|^2\right) \le c\left(g(|D\bu|) + g(|D\bv|) \right).
    $$
    Hence, 
    $$
    I_2 \leq c\,\omega_b(R)\left[ \int_{B_{R/2}} g(|D\bu|) + g(|D\bv|) \, dx\right] .
    $$
    In both cases, we get
    $$
    \begin{aligned}
    I_2 &\leq c\,\varepsilon b_i(R) \int_{B_{R/2}}|\bV_{\log}(D\bu) - \bV_{\log}(D\bv)|^2 \,dx\\
    &\quad + c\, \omega_b(R) \int_{B_{R/2}} g(|D\bu|) + g(|D\bv|)  \, dx.
    \end{aligned}
    $$
    For the term involving $D\bu$, applying \eqref{L log L} with $\mathbf{f}=|D\bu|^p$, $\gamma=1$ and $q=1+\delta_g$, we obtain
    \begin{equation}\label{ineq : LlogL Du}
    \dashint_{B_{R/2}}|D\bu|^p\log\left(e+\frac{|D\bu|^p}{(|D\bu|^p)_{B_{R/2}}}\right)\,dx\le c \left(\dashint_{B_{R/2}}|D\bu|^{p(1+\delta_g)}\,dx\right)^{\frac{1}{1+\delta_g}}.        
    \end{equation}
    By the higher integrability estimate \eqref{ineq : reverse holder} and $\mu|D\bu|^p \le H(x,D\bu)$, we have
    \begin{equation}\label{ineq : higher int Du}
    \left(\dashint_{B_{R/2}}|D\bu|^{p(1+\delta_g)}\,dx\right)^{\frac{1}{1+\delta_g}}\le c \dashint_{B_R}H(x,D\bu)\,dx.        
    \end{equation}
    We set $A \coloneq (|D\bu|^p)_{B_{R/2}} \leq c\,R^{-n} \|D\bu\|^p_{L^p}$. By \eqref{cond : log}, we have
    $$
    \begin{aligned}
        \log(e + |D\bu|) &= \log\left(e + A^{\frac{1}{p}} \left( \frac{|D\bu|^p}{A} \right)^{\frac{1}{p}}\right)\\
        &\leq \log\left(e + A^{\frac{1}{p}}\right)+ \log\left(e +\left( \frac{|D\bu|^p}{A}\right)^{\frac{1}{p}} \right)\\
        &\leq c \left[ \log(e + A) + \log\left(e + \frac{|D\bu|^p}{A}\right) \right] 
    \end{aligned}
    $$
    and hence
    $$
    |D\bu|^p \log(e + |D\bu|) \le c \left( |D\bu|^p \log\left(e + \frac{|D\bu|^p}{A}\right)  + |D\bu|^p \log(e + A) \right).
    $$
    Since $R \leq \frac{1}{e}$, we have $R^{-n} \geq 1$ and $\log\left(\frac{1}{R}\right) \geq 1$, and hence
    $$
    \begin{aligned}
        \log(e + A) &\leq \log\left(R^{-n}\left(e+ c\,\|D\bu\|^p_{L^p}\right)\right)\\
        &= \log \left( e + c\,\|D\bu\|^p_{L^p} \right) + n \log\left(\frac{1}{R}\right)\\
        &\leq \left(\log\left(e + c\,\|D\bu\|^p_{L^p}\right) + n\right) \log\left( \frac{1}{R} \right)\\
        &=c\log\left( \frac{1}{R} \right),
    \end{aligned}
    $$
    where $c$ depends only on $\texttt{data}$.
    Hence, by \eqref{ineq : LlogL Du} and \eqref{ineq : higher int Du},
    $$
    \begin{aligned}
        &\int_{B_{R/2}} g(|D\bu|) \, dx = \int_{B_{R/2}} |D\bu|^p \log(e + |D\bu|) \,dx\\
        &\hspace{1cm} \le c \left( \int_{B_{R/2}} |D\bu|^p \log\left(e + \frac{|D\bu|^p}{A}\right)\,dx  + \int_{B_{R/2}}|D\bu|^p \log(e + A)\,dx \right)\\
        &\hspace{1cm}\le c \left( \int_{B_{R}} H(x,D\bu)\, dx + \log(e + A) \int_{B_{R/2}} |D\bu|^p \, dx \right)\\
        &\hspace{1cm}\le c \left( \int_{B_{R}} H(x,D\bu)\, dx + \log\left( \frac{1}{R} \right) \int_{B_{R}} H(x,D\bu) \, dx \right)\\
        &\hspace{1cm}\le c \log\left( \frac{1}{R} \right) \int_{B_{R}} H(x,D\bu)\, dx.
    \end{aligned}
    $$
    For the term involving $D\bv$, let $\sigma_0$ be as in Theorem \ref{thm:hi-phi} and fix $\sigma \coloneq \min\{\sigma_0,\delta_g\}$. Since $\varphi(|D\bu|) \leq H(x,D\bu)$ in $B_R$, Theorem \ref{thm:hi-phi} applied on $B=B_{R/2}$ with $a_0=a_i(R)$ and $b_0=b_i(R)$, together with \eqref{ineq : reverse holder}, yields
    $$
    \begin{aligned}
    \left(\dashint_{B_{R/2}}|D\bv|^{p(1+\sigma)}dx\right)^{\frac{1}{1+\sigma}}
    &\le\frac{2}{\mu}\left(\dashint_{B_{R/2}}\varphi(D\bv)^{1+\sigma}dx\right)^{\frac{1}{1+\sigma}}\\
    &\le c\left(\dashint_{B_{R/2}}H(x,D\bu)^{1+\sigma}dx\right)^{\frac{1}{1+\sigma}}\\
    &\le c\,\dashint_{B_R}H(x,D\bu)\,dx.
    \end{aligned}
    $$
    We now set $A_{\bv} \coloneq (|D\bv|^p)_{B_{R/2}}$. By \eqref{ineq : Dv L^p}, we have $A_{\bv} \leq c\,R^{-n}\|H(\cdot,D\bu)\|_{L^1(\Omega)}$, and the argument used above for $\log(e+A)$ gives $\log(e+A_{\bv}) \leq c\log\left(\frac{1}{R}\right)$ with $c$ depending only on $\texttt{data}$. Applying \eqref{L log L} with $\mathbf{f}=|D\bv|^p$, $\gamma = 1$, and $q = 1 + \sigma$, and arguing exactly as for $D\bu$ with $A$ replaced by $A_{\bv}$, we obtain
    $$
    \int_{B_{R/2}}g(|D\bv|)\,dx\le c\log\left(\frac{1}{R}\right) \int_{B_R}H(x,D\bu)\,dx.
    $$
    Consequently,
    $$
    \begin{aligned}
        \int_{B_{R/2}} \left( g(|D\bu|) + g(|D\bv|) \right)\,dx 
        \le c \log\left(\frac{1}{R}\right) \int_{B_R} H(x,D\bu)\,dx.    
    \end{aligned}
    $$
    Therefore,
    $$
    I_2 \leq c\,\varepsilon b_i(R)\int_{B_{R/2}}|\bV_{\log}(D\bu) - \bV_{\log}(D\bv)|^2 \,dx + c \,\omega_b(R) \log\left( \frac{1}{R} \right) \int_{B_R} H(x,D\bu)\,dx. 
    $$
    Combining \eqref{main model} and \eqref{I_1 + I_2}, we obtain
    $$
    \begin{aligned}
        &\int_{B_{R/2}} \left( a_i(R)|\bV_p(D\bu) - \bV_p(D\bv)|^2 + b_i(R)|\bV_{\log}(D\bu) - \bV_{\log}(D\bv)|^2 \right) dx \\ 
        &\le c\, \varepsilon a_i(R) \int_{B_{R/2}} |\bV_p(D\bu)-\bV_p(D\bv)|^2\,dx + c \,\omega_a(R) \int_{B_R}H(x,D\bu)\,dx\\
        &+  c\,\varepsilon b_i(R)\int_{B_{R/2}}|\bV_{\log}(D\bu) - \bV_{\log}(D\bv)|^2 \,dx + c \,\omega_b(R) \log\left( \frac{1}{R} \right) \int_{B_R} H(x,D\bu)\,dx.
    \end{aligned}
    $$
    Choosing $\varepsilon>0$ so small that $c\,\varepsilon \leq \frac{1}{2}$, we can absorb the terms involving $\varepsilon$ into the left-hand side. Hence,
    $$
    \begin{aligned}
        &\int_{B_{R/2}} \left( a_i(R)|\bV_p(D\bu) - \bV_p(D\bv)|^2 + b_i(R)|\bV_{\log}(D\bu) - \bV_{\log}(D\bv)|^2 \right) dx \\ 
        &\hspace{3.3cm}\le c \left[\omega_a(R) + \omega_b(R) \log\left( \frac{1}{R} \right) \right] \int_{B_R}H(x,D\bu)\,dx.
    \end{aligned}
    $$
\end{proof}

Using Lemma \ref{lem : comparison}, we next derive a decay estimate for local minimizers of $\mathcal{P}_{\log}$.
\begin{lemma}\label{lem : decay}
    Let $\bu \in W^{1,p}(\Omega;\mr^N)$ be a local minimizer of the functional $\mathcal{P}_{\log}$ defined in \eqref{main functional}, and let $B_R \equiv B_R(x_0)\Subset\Omega$ be a ball with $R \le R_*$. Let $a_i(R)$ and $b_i(R)$ be as in \eqref{inf a, b}.
    Assume that $\omega_a$ and $\omega_b$ are moduli of continuity of $a$ and $b$, respectively, and that \eqref{cond : Logarithmic 2} holds. Then
    \begin{equation}\label{eq : decay}
        \int_{B_{\rho}} H(x,D\bu) \, dx \le c_d \left[ \left( \frac{\rho}{R} \right)^n + \omega_a(R) + \omega_b(R) \log \left( \frac{1}{R} \right) \right] \int_{B_R} H(x, D\bu)\, dx
    \end{equation}
    holds for every $0 < \rho \leq R$, where $B_{\rho} \equiv B_{\rho}(x_0)$ and $c_d$ depends only on $\texttt{data}$.
\end{lemma}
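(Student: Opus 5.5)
The argument compares $\bu$ with the frozen minimizer $\bv$ of Lemma \ref{lem : comparison}. It uses the sup bound \eqref{ineq : sup, int} from Theorem \ref{Thm 3.1} for $\bv$, together with Lemma \ref{lem : Energy comparison} to pass between the full energy $H(x,\cdot)$ and the frozen energy $\varphi(t)=a_i(R)t^p+b_i(R)t^p\log(e+t)$.

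First, the trivial range. If $R/4<\rho\le R$, then \eqref{eq : decay} holds with $c_d=4^n$, since $\int_{B_\rho}H\le\int_{B_R}H\le 4^n(\rho/R)^n\int_{B_R}H$. So assume $\rho\le R/4$.

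Next, split the energy of $\bu$ on $B_\rho$ as
$$
\int_{B_\rho}H(x,D\bu)\,dx=\int_{B_\rho}\varphi(|D\bu|)\,dx+\int_{B_\rho}\big[(a-a_i(R))|D\bu|^p+(b-b_i(R))|D\bu|^p\log(e+|D\bu|)\big]dx .
$$
In the second integral, the $a$-part is at most $2\omega_a(R)\int_{B_R}|D\bu|^p\le c\,\omega_a(R)\int_{B_R}H$, because $\mu|D\bu|^p\le H$. The $b$-part is at most $c\,\omega_b(R)\log(1/R)\int_{B_R}H$ by Lemma \ref{lem : Energy comparison} with $\gamma=1$, since $B_\rho\subset B_{R/2}$.

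The main term $\int_{B_\rho}\varphi(|D\bu|)$ is where the comparison enters. Using a shifted-type inequality for $\varphi$ (from \eqref{ineq  : comparable} and \eqref{eq : monotonicity V}, applied to $\varphi$ and to its two pieces), I will bound
$$
\varphi(|D\bu|)\le c\,\varphi(|D\bv|)+c\big(a_i(R)|\bV_p(D\bu)-\bV_p(D\bv)|^2+b_i(R)|\bV_{\log}(D\bu)-\bV_{\log}(D\bv)|^2\big).
$$
This follows from $|\bV_p(D\bu)|^2\le 2|\bV_p(D\bv)|^2+2|\bV_p(D\bu)-\bV_p(D\bv)|^2$, the analogous inequality for $\bV_{\log}$, and $|\bV_p(\mathbf P)|^2=|\mathbf P|^p$, $|\bV_{\log}(\mathbf P)|^2\simeq|\mathbf P|^p\log(e+|\mathbf P|)$.

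Integrating over $B_\rho$, the difference terms are controlled by Lemma \ref{lem : comparison}, giving $c(\omega_a(R)+\omega_b(R)\log(1/R))\int_{B_R}H$. For the $\bv$-term, I apply \eqref{ineq : sup, int} on $B_{R/4}\subset B_{R/2}$ (radius $R/2$ ball, half of it) together with the minimality of $\bv$:
$$
\int_{B_\rho}\varphi(|D\bv|)\le|B_\rho|\sup_{B_{R/4}}\varphi(|D\bv|)\le c\,\rho^n\dashint_{B_{R/2}}\varphi(|D\bv|)\le c\Big(\frac{\rho}{R}\Big)^n\int_{B_{R/2}}\varphi(|D\bu|)\le c\Big(\frac{\rho}{R}\Big)^n\int_{B_R}H .
$$
Here I also use $\varphi(|D\bu|)\le H(x,D\bu)$ on $B_R$, and the fact that the constant in \eqref{ineq : sup, int} is independent of $a_0,b_0$. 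Summing the three contributions gives \eqref{eq : decay}.

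The main obstacle is the pointwise splitting of $\varphi(|D\bu|)$ through $\bV_p$ and $\bV_{\log}$ with constants uniform in the coefficients. It is elementary but must be checked carefully for the $\log$ field via \eqref{ineq  : comparable} with $c_\varphi=c_\varphi(p)$. A second point is that applying \eqref{ineq : sup, int} requires $\bv$ to be a local minimizer on $B_{R/2}$, which holds.
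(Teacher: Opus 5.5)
Your proposal is correct and follows essentially the same route as the paper. You dispose of the range $\rho>R/4$ with $c_d=4^n$. You then split off the oscillation terms $(a-a_i(R))|D\bu|^p$ and $(b-b_i(R))|D\bu|^p\log(e+|D\bu|)$, controlling them by $2\omega_a(R)$ and by Lemma~\ref{lem : Energy comparison} with $\gamma=1$. Next you split the frozen energy of $\bu$ through $\bV_p$ and $\bV_{\log}$ into a $\bv$-part and a difference part, bound the difference part by Lemma~\ref{lem : comparison}, and bound the $\bv$-part by the sup estimate \eqref{ineq : sup, int} on $B_{R/2}$ together with the minimality of $\bv$ and $\varphi(|D\bu|)\le H(x,D\bu)$ on $B_R$.
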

\begin{proof} 
    It suffices to prove \eqref{eq : decay} for $\rho \leq \frac{R}{4}$, since otherwise it holds with $c_d = 4^n$. By the definition of $H(x, D\bu)$, we have
    \begin{equation*}
    \begin{aligned}
        &\int_{B_{\rho}} H(x, D\bu) \, dx = \int_{B_{\rho}} a(x) |D\bu|^p + b(x)|D\bu|^p \log(e + |D\bu|) \, dx \\
        &= \int_{B_{\rho}} [a(x) - a_i(R)] |D\bu|^p + a_i(R) |D\bu|^p \\
        &\hspace{1.5cm} + b_i(R) |D\bu|^p \log(e + |D\bu|) + [b(x)-b_i(R)]|D\bu|^p \log(e + |D\bu|) \, dx\\
        &\le\int_{B_{\rho}} a_i(R) |D\bu|^p + b_i(R) |D\bu|^p \log(e + |D\bu|) \, dx\\
        &\hspace{1.5cm} + \int_{B_{R/2}} [a(x) - a_i(R)] |D\bu|^p + [b(x)-b_i(R)]|D\bu|^p \log(e + |D\bu|) \, dx.
    \end{aligned}
    \end{equation*}
    By Lemma \ref{lem : Energy comparison} with $\gamma = 1$ and $0 \le a(x) - a_i(R) \le 2\omega_a(R)$ in $B_R$, the second integral on the right-hand side satisfies 
    \begin{equation*}
    \begin{aligned}
        &\int_{B_{R/2}} [a(x) - a_i(R)] |D\bu|^p + [b(x)-b_i(R)]|D\bu|^p \log(e + |D\bu|) \, dx\\
        &\leq c\, \omega_a(R) \int_{B_R} H(x, D\bu)\,dx + c\,\omega_b(R)\log\left( \frac{1}{R} \right) \int_{B_R} H(x,D\bu) \, dx\\
        &=c \left( \omega_a(R) + \omega_b(R) \log\left( \frac{1}{R} \right) \right) \int_{B_R} H(x, D\bu) \, dx,
    \end{aligned}
    \end{equation*}
    where $c$ depends only on $\texttt{data}$.
    Next, by the equivalence between the frozen energy and the vector fields $\bV_p$ and $\bV_{\log}$ and by the elementary inequality $|\mathbf{P}|^2 \le 2|\mathbf{Q}|^2 + 2|\mathbf{P}-\mathbf{Q}|^2$, we obtain
    \begin{equation*}
    \begin{aligned}
        &\int_{B_{\rho}} a_i(R) |D\bu|^p + b_i(R) |D\bu|^p \log(e + |D\bu|) \, dx\\
        &\leq c \int_{B_{\rho}} a_i(R) |\bV_p(D\bu)|^2 + b_i(R) |\bV_{\log} (D\bu)|^2 \, dx\\
        &\leq c \int _{B_{\rho}} a_i(R) |\bV_p(D\bv)|^2 + b_i(R) |\bV_{\log} (D\bv)|^2 \, dx\\
        &\hspace{0.5cm} + c \int_{B_{R/2}} a_i(R) |\bV_p(D\bu) - \bV_p(D\bv)|^2 + b_i(R) |\bV_{\log} (D\bu) - \bV_{\log} (D\bv)|^2 \, dx.
    \end{aligned}
    \end{equation*}
    By Lemma \ref{lem : comparison}, the last integral satisfies
    \begin{equation*}
    \begin{aligned}
        &\int_{B_{R/2}} a_i(R) |\bV_p(D\bu) - \bV_p(D\bv)|^2 + b_i(R) |\bV_{\log} (D\bu) - \bV_{\log} (D\bv)|^2 \, dx\\
        &\leq c\left( \omega_a(R) + \omega_b(R) \log\left( \frac{1}{R} \right) \right) \int_{B_R} H(x, D\bu) \, dx.
    \end{aligned}
    \end{equation*}
    For the first integral on the right-hand side, since $\rho \leq \frac{R}{4}$, the estimate \eqref{ineq : sup, int} for $\bv$ in $B_{R/2}$, the minimality of $\bv$, and \eqref{inf a, b} imply that
    \begin{align*}
    &\dashint_{B_\rho} \left( a_i(R)|\bV_p(D\bv)|^2 + b_i(R)|\bV_{\log}(D\bv)|^2 \right) dx\\ 
    &\leq c \,\dashint_{B_{R/2}} \left( a_i(R)|\bV_p(D\bu)|^2 + b_i(R)|\bV_{\log}(D\bu)|^2 \right) dx\\
    &\leq c \,\dashint_{B_R} H(x, D\bu) \, dx = \frac{c}{|B_R|} \int_{B_R} H(x, D\bu) \, dx.
    \end{align*}
    Multiplying both sides by $|B_\rho|$ and using $\frac{|B_\rho|}{|B_R|} = \left(\frac{\rho}{R}\right)^n$, we get
    \begin{align*}
        \int_{B_\rho} \left( a_i(R)|\bV_p(D\bv)|^2 + b_i(R)|\bV_{\log}(D\bv)|^2 \right) dx
        \le c \left( \frac{\rho}{R} \right)^n \int_{B_R} H(x, D\bu) \, dx.
    \end{align*}
    Combining all the above estimates, we obtain \eqref{eq : decay}.
\end{proof}

We now prove assertion (i) of Theorem \ref{thm : main theorem}. Let $\bu \in W^{1,p}(\Omega;\mr^N)$ be a local minimizer of $\mathcal{P}_{\log}$ and assume that $\ell = 0$ in \eqref{cond : ell}. Together with \eqref{cond : omega_a}, this means that
\begin{equation}\label{cond : omega zero}
\limsup_{r \to 0} \left( \omega_a(r) + \omega_b(r) \log \left( \frac{1}{r} \right) \right) = 0.
\end{equation}

Our goal is to show that $\bu \in C^{0,\beta}_{\mathrm{loc}}(\Omega;\mathbb{R}^N)$ for every $\beta \in (0,1)$. More precisely, we prove the following local estimate: for every $\beta \in (0,1)$, there exists $R_0 \in (0, R_*]$, depending only on $\texttt{data}$, $\omega_a$, $\omega_b$, and $\beta$, such that
\begin{equation}\label{eq:5.18}
[\bu]_{\beta;B_{R/2}} \le c\left(\texttt{data}, \beta\right) \left( R^{\,p\,(1-\beta)} \dashint_{B_R} H(x, D\bu) \, dx \right)^{\frac{1}{p}}
\end{equation}
for every ball $B_R \Subset \Omega$ with $R \le R_0$.
Here, $[\bu]_{\beta;B_{R/2}}$ denotes the usual H\"older seminorm of $\bu$ in the ball $B_{R/2}$:
\begin{equation}\label{eq:5.19}
[\bu]_{\beta;B_{R/2}} \coloneq \sup_{\substack{x, y \in B_{R/2}\\ x \neq y}} \frac{|\bu(x) - \bu(y)|}{|x - y|^\beta}.
\end{equation}

To establish this, we first prove a Morrey-type estimate. Specifically, for every $\delta \in (0, n)$, there exist a positive constant $c = c(\texttt{data}, \delta)$ and a radius
\begin{equation}\label{R_o choice}
R_0 \in (0, R_*], \quad \text{depending only on } \texttt{data}, \ \omega_a, \ \omega_b, \text{ and } \delta,
\end{equation}
such that the decay estimate
\begin{equation}\label{ineq : decay estimate}
\int_{B_\rho} H(x, D\bu) \, dx \le c \left( \frac{\rho}{R} \right)^{n-\delta} \int_{B_R} H(x, D\bu) \, dx
\end{equation}
holds for all concentric balls $B_\rho \subset B_R \Subset \Omega$ with $0 < \rho \le R \le R_0$.

The Poincar\'e inequality \eqref{vector poincare} and \eqref{ineq : decay estimate} give
$$
\int_{B_\rho} \left| \frac{\bu - (\bu)_{B_\rho}}{\rho} \right|^p dx \le c \int_{B_\rho} |D\bu|^p \, dx \le c \left( \frac{\rho}{R} \right)^{n-\delta} \int_{B_R} H(x, D\bu) \, dx
$$
for all $0 < \rho \le R \le R_0$. Hence the $C_{\mathrm{loc}}^{0,\beta}$-regularity of $\bu$ with $\beta = 1 - \frac{\delta}{p}$ follows from Campanato's characterization of H\"older continuity and a standard covering argument. The same argument also yields the local estimate \eqref{eq:5.18} for $\beta = 1 - \frac{\delta}{p}$, and hence for every smaller exponent, because $[\bu]_{\beta';B_{R/2}} \le R^{\beta-\beta'}[\bu]_{\beta;B_{R/2}}$ whenever $0<\beta'\le\beta$.
Since $\delta > 0$ can be chosen arbitrarily small, we conclude that $\bu \in C^{0,\beta}_{\mathrm{loc}}(\Omega;\mr^N)$ for every $\beta \in (0,1)$.

It remains to prove \eqref{ineq : decay estimate}. We apply Lemma \ref{lemma 2.5} to
$$
\phi(\rho) \coloneq \int_{B_\rho} H(x, D\bu) \, dx,
$$
with $\tilde{c} \coloneq c_d$ from Lemma \ref{lem : decay}. For a given $\delta \in (0,n)$, let $\overline{\varepsilon} > 0$ be the constant from Lemma \ref{lemma 2.5}, which depends only on $n$, $\delta$, and $c_d$, and hence only on $\texttt{data}$ and $\delta$. By \eqref{cond : omega zero}, we can choose $R_0 \in (0, R_*]$ such that
$$
\omega_a(R) + \omega_b(R) \log\left(\frac{1}{R}\right) \le \overline{\varepsilon} \quad \text{for all } R \le R_0.
$$
Then Lemma \ref{lem : decay} gives
$$
\phi(\rho) \le c_d \left[ \left(\frac{\rho}{R}\right)^n + \overline{\varepsilon} \right] \phi(R)
$$
whenever $0 < \rho \le R \le R_0$, and \eqref{ineq : decay estimate} follows from Lemma \ref{lemma 2.5}.

\medskip

We now prove assertion (ii) of Theorem \ref{thm : main theorem}.
Assume that $a(\cdot)$ and $b(\cdot)$ are H\"older continuous with exponent $\alpha \in (0,1)$, that is,
$$
\omega_a(R) \le C R^\alpha \qquad \text{and} \qquad \omega_b(R) \le C R^\alpha.
$$
In particular, \eqref{cond : omega zero} holds, so the Morrey-type estimate \eqref{ineq : decay estimate} is available. We derive a decay estimate for the excess $\dashint_{B_\rho} |D\bu - (D\bu)_{B_\rho}|^p dx$ and conclude that $D\bu \in C^{0,\beta}_{\text{loc}}(\Omega;\mathbb{R}^{N\times n})$ by Campanato's characterization.

Let $B_R \equiv B_R(x_0) \Subset \Omega$ with $R \le R_*$, and let $\bv \in \bu + W_0^{1,p}(B_{R/2}; \mathbb{R}^N)$ be the minimizer of the frozen functional \eqref{eq:frozen_problem}, with $a_i(R)$ and $b_i(R)$ as in \eqref{inf a, b}.

For $0 < \rho \le \frac{R}{2}$, the triangle inequality and the elementary inequality
$$
\dashint_{B_\rho} |D\bu - (D\bu)_{B_\rho}|^p dx \le 2^p \dashint_{B_\rho} |D\bu - \mathbf{z}|^p dx,
$$
valid for every $\mathbf{z} \in \mr^{N \times n}$, give
$$
\begin{aligned}
    \dashint_{B_\rho} |D\bu - (D\bu)_{B_\rho}|^p dx &\le 2^p \dashint_{B_\rho} |D\bu - (D\bv)_{B_\rho}|^p dx\\
    &\leq 4^p {\dashint_{B_\rho} |D\bv - (D\bv)_{B_\rho}|^p dx} + 4^p {\dashint_{B_\rho} |D\bu - D\bv|^p dx}.
\end{aligned}
$$

Theorem \ref{Thm 3.1} applied on $B_{R/2}$ with $a_0=a_i(R)$ and $b_0=b_i(R)$, the minimality of $\bv$, and \eqref{inf a, b} give
$$
\begin{aligned}
    &(a_i(R)+b_i(R))\dashint_{B_\rho} |D\bv - (D\bv)_{B_\rho}|^p dx\\
    &\hspace{1cm}\le c \left(\frac{\rho}{R}\right)^{\tilde{\alpha}p} \dashint_{B_{R/2}} \left( a_i(R)|D\bv|^p + b_i(R)|D\bv|^p \log(e+|D\bv|) \right) dx\\
    &\hspace{1cm}\le c \left(\frac{\rho}{R}\right)^{\tilde{\alpha}p} \dashint_{B_{R/2}} \left( a_i(R)|D\bu|^p + b_i(R)|D\bu|^p \log(e+|D\bu|) \right) dx\\
    &\hspace{1cm}\le c \left(\frac{\rho}{R}\right)^{\tilde{\alpha}p} \dashint_{B_R} H(x, D\bu) dx
\end{aligned}
$$
for some $\tilde{\alpha} \in (0,1)$ depending only on $n$, $N$, and $p$. By \eqref{ineq : ai+bi}, this implies
$$
\dashint_{B_\rho} |D\bv - (D\bv)_{B_\rho}|^p dx \le c \left(\frac{\rho}{R}\right)^{\tilde{\alpha}p} \dashint_{B_R} H(x, D\bu) dx.
$$

To estimate the comparison error, we note that the function $t \mapsto \frac{1}{p}t^p\log(e+t)$, whose vector field is $\bV_{\log}$, satisfies \eqref{cond : mono map} with a constant depending only on $p$, and that its second derivative is bounded from below by $(p-1)t^{p-2}$ because $\log(e+t) \geq 1$. Hence \eqref{ineq : |V_p|} and \eqref{ineq : 2.5} give
$$
|\bV_p(\mathbf{Q})-\bV_p(\mathbf{R})|^2 \leq c\,(|\mathbf{Q}|+|\mathbf{R}|)^{p-2}|\mathbf{Q}-\mathbf{R}|^2 \leq c\,|\bV_{\log}(\mathbf{Q})-\bV_{\log}(\mathbf{R})|^2
$$
for all $\mathbf{Q},\mathbf{R}\in\mr^{N\times n}$, where $c$ depends only on $n$, $N$, and $p$. Combining this with \eqref{ineq : ai+bi} and Lemma \ref{lem : comparison}, we obtain
\begin{equation}\label{ineq : V_p comparison}
\begin{aligned}
&\dashint_{B_{R/2}} |\bV_p(D\bu)-\bV_p(D\bv)|^2 \,dx \\
&\leq \frac{c}{\mu}\dashint_{B_{R/2}} \left( a_i(R)|\bV_p(D\bu)-\bV_p(D\bv)|^2 + b_i(R)|\bV_{\log}(D\bu)-\bV_{\log}(D\bv)|^2 \right) dx\\
&\leq c\left( \omega_a(R) + \omega_b(R) \log\left(\frac{1}{R}\right) \right) \dashint_{B_R} H(x, D\bu)\,dx.
\end{aligned}
\end{equation}
If $p \ge 2$, then \eqref{2.10} and \eqref{ineq : V_p comparison} give
$$
\begin{aligned}
    \dashint_{B_{R/2}} |D\bu - D\bv|^p dx &\le c \dashint_{B_{R/2}} |\bV_p(D\bu) - \bV_p(D\bv)|^2 dx \\
    &\le c \, \left( \omega_a(R) + \omega_b(R) \log\left(\frac{1}{R}\right) \right) \dashint_{B_R} H(x, D\bu) dx \\
    &\le c R^{\sigma_1} \dashint_{B_R} H(x, D\bu) dx \eqcolon I
\end{aligned}
$$
for some $\sigma_1 \in (0, \alpha)$.

If $1 < p < 2$, then H\"older's inequality, \eqref{ineq : V_p comparison}, and \eqref{ineq : Dv L^p} give
$$
\begin{aligned}
    &\dashint_{B_{R/2}} |D\bu - D\bv|^p dx \\
    &\hspace{1cm} \le c \left( \dashint_{B_{R/2}} |\bV_p(D\bu) - \bV_p(D\bv)|^2 dx \right)^{\frac{p}{2}} \left( \dashint_{B_{R/2}} (|D\bu| + |D\bv|)^p dx \right)^{\frac{2-p}{2}}  \\
    &\hspace{1cm}\le c \left[ \left( \omega_a(R) + \omega_b(R) \log\left(\frac{1}{R}\right) \right) \dashint_{B_R} H(x, D\bu) dx \right]^{\frac{p}{2}} \left( \dashint_{B_R} H(x, D\bu) dx \right)^{\frac{2-p}{2}}\\
    &\hspace{1cm} \le c R^{\sigma_2} \dashint_{B_R} H(x, D\bu) dx \eqcolon II
\end{aligned}
$$
where $\sigma_2 = \frac{p}{2} \cdot \sigma_1 > 0$.

Combining these estimates, we obtain the oscillation estimate
$$
\dashint_{B_\rho} |D\bu - (D\bu)_{B_\rho}|^p dx \le c \left[ \left(\frac{\rho}{R}\right)^{\tilde{\alpha}p} + R^\sigma \left(\frac{R}{\rho}\right)^n \right] \dashint_{B_R} H(x, D\bu) dx
$$
for $0<\rho\le\frac{R}{2}$, where $\sigma = \min\{\sigma_1, \sigma_2\} > 0$. The same estimate holds for $\frac{R}{2}<\rho\le R$, since in this case its left-hand side is bounded by $\frac{2^{p+n}}{\mu}\dashint_{B_R} H(x, D\bu) dx$ and $\frac{\rho}{R}>\frac{1}{2}$.

Let $\delta \in (0, \sigma)$ be chosen below, and let $R_0 \in (0,R_*]$ be the radius in \eqref{R_o choice} corresponding to $\delta$. For a fixed ball $B_{R_0} \Subset \Omega$ and any $0 < R \leq R_0$, the Morrey-type estimate \eqref{ineq : decay estimate} gives
$$
\int_{B_R} H(x,D\bu)\,dx \leq c\left(\frac{R}{R_0}\right)^{n-\delta}\int_{B_{R_0}}H(x,D\bu)\,dx.
$$
Consequently, the mean energy satisfies
$$
\dashint_{B_R}H(x,D\bu)\,dx \leq cR^{-\delta}\int_{B_{R_0}}H(x,D\bu)\,dx,
$$
where $c$ now also depends on $R_0$. Combining this with the oscillation estimate, we obtain
$$
\dashint_{B_\rho}|D\bu-(D\bu)_{B_\rho}|^p\,dx \leq c\left[\left(\frac{\rho}{R}\right)^{\tilde{\alpha}p}+R^\sigma\left(\frac{R}{\rho}\right)^n\right]R^{-\delta}.
$$
We now choose $\delta < \frac{\sigma\tilde{\alpha}p}{n+\tilde{\alpha}p}$, so that there exists $\varepsilon > 0$ with
$$
\frac{\delta}{\tilde{\alpha}p}<\varepsilon<\frac{\sigma-\delta}{n},
$$
that is, $\varepsilon\tilde{\alpha}p-\delta>0$ and $\sigma-\delta-\varepsilon n>0$. Setting $\rho=R^{1+\varepsilon}$, we have $\frac{\rho}{R}=R^\varepsilon$ and $\frac{R}{\rho}=R^{-\varepsilon}$, and hence
$$
\dashint_{B_\rho}|D\bu-(D\bu)_{B_\rho}|^p\,dx \leq c\left[R^{\varepsilon\tilde{\alpha}p-\delta}+R^{\sigma-\delta-\varepsilon n}\right].
$$
We set $\gamma\coloneq\min\{\varepsilon\tilde{\alpha}p-\delta, \sigma-\delta-\varepsilon n\}>0$ and $\beta\coloneq\frac{\gamma}{p(1+\varepsilon)}\in(0,1)$. Since $R^\gamma=\rho^{\frac{\gamma}{1+\varepsilon}}$, we obtain
$$
\dashint_{B_\rho}|D\bu-(D\bu)_{B_\rho}|^p\,dx \leq c\rho^{p\beta},
$$
or equivalently,
$$
\int_{B_\rho}|D\bu-(D\bu)_{B_\rho}|^p\,dx \leq c\rho^{n+p\beta}.
$$
Every $\rho \in (0, R_0^{1+\varepsilon}]$ can be written as $\rho = R^{1+\varepsilon}$ with $R \in (0,R_0]$, and the constant $c$ is uniform for $x_0$ in compact subsets of $\Omega$. Therefore, Campanato's characterization of H\"older continuity yields
$$
D\bu\in C^{0,\beta}_{\mathrm{loc}}(\Omega;\mathbb R^{N\times n}).
$$
This completes the proof of Theorem \ref{thm : main theorem}.

\end{document}